%% LyX 2.3.6.2 created this file.  For more info, see http://www.lyx.org/.
%% Do not edit unless you really know what you are doing.
\documentclass[12pt]{article}
\usepackage[latin9]{inputenc}
\usepackage{geometry}
\geometry{verbose,tmargin=2cm,bmargin=2cm,lmargin=2.5cm,rmargin=2.5cm}
\usepackage[toc,page]{appendix}
\usepackage{amssymb}
\usepackage[unicode=true,
bookmarks=false,
breaklinks=false,pdfborder={0 0 1},colorlinks=false]
{hyperref}
\usepackage{amssymb}
%% The amsmath package provides various useful equation environments.
\usepackage{amsmath}
%% The amsthm package provides extended theorem environments
\usepackage{amsthm}
\usepackage{amsfonts}
\usepackage{mathrsfs}
\usepackage{authblk}
%%%%%%%%%%%%%%%%%%%%%%%%%%%%%%%%%%%%%%%%%%%%%%
\theoremstyle{definition}
\newtheorem{theorem}{Theorem} 
\newtheorem{lemma}{Lemma}
\newtheorem{corollary}{Corollary}
\newtheorem{proposition}{Proposition}
\theoremstyle{remark}
\newtheorem{remark}{Remark}
\newtheorem{assumption}{Assumption}

\makeatletter

%%%%%%%%%%%%%%%%%%%%%%%%%%%%%% User specified LaTeX commands.
\usepackage{titlesec}
\titleformat{\section}
{\normalfont\normalsize\bfseries}{\thesection.}{1em}{}
\titleformat{\subsection}
{\normalfont\normalsize\bfseries\itshape}{\thesubsection.}{1em}{}
\titleformat{\subsubsection}
{\normalfont\normalsize\itshape}{\thesubsubsection.}{1em}{}

%%%%%%%%%%%%%%%%%%%%%%%%%%%%%%%%%%%%%%%%%%%%%%%%%%%%%%%%%%%%%%%%%%%%%%%%%%%%%%%%%%%%%%%%%%%%%%%%%%%%%%%%%%%%%%%%%%%%%%%%%%%%%%%%%%%%%%%%%%%%%%%%%%%%%%%%%%%%%%%%%%%%%%%%%%%%%%%%%%%%%%%%%%%%%%%%%%%%%%%%%%%%%%%%%%%%%%%%%%%%%%%%%%%%%%%%%%%%%%%%%%%%%%%%%%%%
\usepackage{amsfonts}
\usepackage{graphicx}
\usepackage{amsthm}
\setcounter{MaxMatrixCols}{10}
%TCIDATA{OutputFilter=LATEX.DLL}
%TCIDATA{Version=5.50.0.2953}
%TCIDATA{<META NAME="SaveForMode" CONTENT="1">}
%TCIDATA{BibliographyScheme=BibTeX}
%TCIDATA{Created=Sunday, December 23, 2012 10:06:43}
%TCIDATA{LastRevised=Thursday, April 28, 2016 20:45:20}
%TCIDATA{<META NAME="GraphicsSave" CONTENT="32">}
%TCIDATA{<META NAME="DocumentShell" CONTENT="Standard LaTeX\Blank - Standard LaTeX Article">}
%TCIDATA{CSTFile=40 LaTeX article.cst}

%\newenvironment{proof}[1][Proof]{\noindent\textbf{#1.} }{\ \rule{0.5em}{0.5em}}

\makeatother

\begin{document}
	\title{\textbf{Estimating non-linear functionals of trawl processes}}
	\author{Orimar Sauri}
	\affil{Department of Mathematical Sciences, Aalborg University, Denmark}
	
	\maketitle

	\begin{abstract}
		Trawl processes are a family of continuous-time, infinitely divisible, stationary processes whose correlation structure is entirely characterized by their so-called trawl function. This paper investigates the problem of estimating non-linear functionals of a trawl function under in-fill and long-span sampling schemes. Specifically, building on the work of \cite{SauriVeraart23}, we introduce non-parametric estimators for functionals of the type $\Psi_{t}(g)=\int_{0}^{t}g(a(s))\mathrm{d}s$ and $ \Lambda_t(g)=\int_{t}^{\infty}g(a(s))\mathrm{d}s$, where $a$ represents the trawl function of interest and $g$ a non-linear test function. We show that our estimator for $\Psi_{t}(g)$ is consistent and asymptotically Gaussian regardless of the memory of the process. We further demonstrate that the same phenomenon occurs for the estimation of $\Lambda_t(g)$ as long as $g(x)= \mathrm{O} (\lvert x\rvert^p)$, as $x\to0$, for some $p>3$. Additionally, we illustrate how our results can be used to construct a test statistic robust to memory effects for the presence of $T$-dependent.
	\end{abstract}
	
	\noindent%
	{\it Keywords:} Functional limit theorems;  Infinitely divisible processes; Nonparametric estimation; Trawl processes.

	%% PACS codes here, in the form: \PACS code \sep code
	
	%% MSC codes here, in the form: \MSC code \sep code
	%% or \MSC[2008] code \sep code (2000 is the default)

	%% Add \usepackage{lineno} before \begin{document} and uncomment 
		%% following line to enable line numbers
		%% \linenumbers
		
		%% main text
		%%
		
		%% Use \section commands to start a section
		\section{Introduction}
		
		Trawl processes form a subclass of continuous-time, strictly stationary, and infinitely divisible processes. Specifically, a trawl process $X = (X_t)_{t \in \mathbb{R}}$ is constructed by evaluating a L\'evy basis -- also known as an infinitely divisible independently scattered random measure -- $L$ over a set $A_t$, known as a \textit{trawl set}, which is typically a subset of $\mathbb{R}^2$. The distribution of $X_t$ is completely determined by $L$,  while the autocorrelation function of $X$ is  fully described by the so-called \textit{trawl function} $a$. This relationship is expressed through the identity
		\[ \rho_X(h)=\frac{\int_{h}^{\infty}a(s)\mathrm{d}s}{\int_{0}^{\infty}a(s)\mathrm{d}s},\,\,\, h\geq0 .\]
		By design, a trawl process offers a highly flexible autocorrelation structure and produces a wide range of marginal distributions within the class of infinitely divisible distributions. These features make trawl processes suitable for modeling data with stylized facts such as non-Gaussianity, heavy tails, jumps, and persistence. This is reflected in their increasing popularity for modeling complex temporal phenomena across disciplines such as finance (\cite{BNLundShepVerr14,BenLundeShepVer23,ShepYan17,Veraart19,Veraart24}) and physics (\cite{HedSch13,MarquezSch16,Sch05}). 
		
		Regarding statistical inference for trawl processes, various parametric approaches have been explored in the literature. See for instance  \cite{Bacro20,BNLundShepVerr14,BenLundeShepVer23,ShephardYang16}. In this context, the work in \cite{SauriVeraart23} is the first to address non-parametric estimation of the trawl function and serves as the starting point for the present study. Specifically, that paper proves that, given $n$ equidistant observations of $X$, say $(X_{i\Delta_{n}})_{i=0}^{n}$,  the trawl function $a$  can be consistently estimated by
		\begin{align}
			\check{a}(t) & =-\Delta_{n}^{-1}\left[\hat{\varGamma}_{l+1}-\hat{\varGamma}_{l}\right],\,\,\,\text{if }\Delta_{n}l\leq t<(l+1)\Delta_{n},\label{eq:defahat}
		\end{align}
		assuming that as $n \to \infty$, $\Delta_n \to 0$, with $n \Delta_n \to \infty$, and provided the L\'evy seed of $L$ (see Section \ref{sec:Preliminaries}) has unit variance.  $\hat{\varGamma}_{l}$ in \eqref{eq:defahat} denotes the sample auto-covariance function of $X$ at lag $l\Delta_{n}$. Furthermore, under suitable conditions, the estimator
		$\check{a}(t)$ is asymptotically normal with asymptotic variance given by
		\begin{align*}
			\sigma_{a}^{2}(t):= & \mathfrak{K}_{4}a(t)+2\int_{0}^{\infty}a(s)^{2}\mathrm{d}s+2\int_{0}^{t}a(t-s)a(t+s)\mathrm{d}s\\
			& -2\int_{t}^{\infty}a(s-t)a(t+s)\mathrm{d}s,
		\end{align*}
		for some non-negative constant $\mathfrak{K}_{4}$. In the
		same work, we proposed to use  $\check{a}$ once again along with a ``Riemann sum'' approach to estimate
		the quadratic functionals of $a$ appearing
		in the definition of $\sigma_{a}^{2}(t)$, e.g. $\int_{0}^{\infty}a(s)^{2}\mathrm{d}s$. This procedure has some limitations. First, it relies on a tuning parameter whose choice and finite-sample performance are unclear. Second, the estimation error of $\int_{0}^{\infty}a(s)^{2}\mathrm{d}s$ is dependent on the behaviour of $\check{a}(t)$ for large $t$, which could result on a statistic that is very sensitive to the memory of the underlying stochastic process. In this paper, we address these issues by estimating functionals of the type
		\[
		\Psi_{t}(g)=\int_{0}^{t}g(a(s))\mathrm{d}s;\,\,\Lambda(g):=\int_{t}^{\infty}g(a(s))\mathrm{d}s,
		\]
		where $g$ is a continuous non-linear test function, based on the sample $(X_{i\Delta_{n}})_{i=0}^{n}$. 
		
		\subsection*{Main contributions}
		By slightly modifying the definition of $\check{a}$ in \eqref{eq:defahat}, which we denote by $\hat{a}$ (see \eqref{eq:def_ahat_modified} below), we derive limit theorems for the functionals 
		\begin{align*}\Psi_{t}^{n}(g):= & \Delta_{n}\sum_{l=0}^{[t/\Delta_{n}]-1}g(\hat{a}(l\Delta_{n})),\,\,\,\,0\leq t\leq(n-1)\Delta_{n};\\
			\Lambda_{t}^{n}(g):= & \Delta_{n}\sum_{l=[t/\Delta_{n}]}^{n-1}g(\hat{a}(l\Delta_{n}));\,\,0\leq t\leq(n-1)\Delta_{n},
		\end{align*}
		using a double-asymptotic scheme based on both long-span and high-frequency observations of the underlying trawl process $X$. The function $g$ is assumed to be continuous. 
		
		Under minimal integrability conditions on $X$ and a growth condition on $g$, we provide a functional law of large numbers for $\Psi^{n}(g)$. With stronger assumptions on the trawl function, we establish a functional Central Limit Theorem (CLT hereafter)) at the rate $\sqrt{n\Delta_n}$, applicable to test functions that are twice continuously differentiable and of polynomial growth. Additionally, in line with \cite{SauriVeraart23}, we show that this convergence is independent of the memory  of $X$. In contrast, the asymptotic behaviour of $\Lambda^{n}(g)$ is more subtle. First, we demonstrate that, in general, $\Lambda_{t}^{n}(g)$ is not consistent for $\int_{t}^{\infty}g(a(s))\mathrm{d}s$ unless $g(x)= \mathrm{O} (\lvert x\rvert^p)$, as $x\to0$, for some $p>2$. In the latter situation with $p>3$,  $\Lambda_{t}^{n}(g)$ also fulfils a CLT analogous to that of  $\Psi^{n}(g)$. Secondly, to mitigate the inconsistency of $\Lambda^{n}(g)$ we alter its definition (see equation \eqref{def_Lambdabar}) following the approach suggested in \cite{SauriVeraart23}. The resulting estimator $\bar{\Lambda}^{n}(g)$ is consistent and satisfies a CLT at rate $\sqrt{n\Delta_n}$. However, this modification requires a tuning parameter and its second-order asymptotics are dependent on the memory of the process and the sample scheme. Finally, to illustrate the usefulness of our results, we propose a method for testing the presence of $T$-dependent data using a statistic that is robust to long memory.
		\subsection*{Outline of the paper}
		
		The remainder of the paper is organised as follows. Section 2 introduces the main mathematical concepts and notation used throughout the paper and recalls the definition and basic properties of trawl processes. Section 3 presents the main results concerning the asymptotics of the functionals $\Psi^{n}(g)$ and $\Lambda^{n}(g)$, along with our new test for $T$-dependence. Due to the technical nature of the arguments, most proofs are deferred to Section 4. The paper also contains an appendix where we carry out the computations required to approximate the asymptotic variance appearing in our central limit theorems. The arguments done there are primarily algebraic and technical in nature, and the appendix may therefore be skipped on a first reading.
		
		\section{Preliminaries\label{sec:Preliminaries}}
		
		This part introduces the basic notation and concepts that
		will be used in this paper. Throughout the following sections
		$\left(\Omega,\mathcal{F},\mathbb{P}\right)$ denotes a complete
		probability space. As usual, a function $f:\mathbb{R}\rightarrow\mathbb{R}$
		is said to be of class $\mathcal{C}^{N}$, $N\in\mathbb{N}$,
		if it is $N$-times continuously differentiable. In this case, $f^{(k)}$ will denote the $k$th derivative of $f$, with
		the convention that $f^{(0)}=f$. Often, we will write $f^{\prime}$
		instead of $f^{(1)}$. The \textsl{symmetrization of a function} $f:\mathbb{R}^{2}\rightarrow\mathbb{R}$
		is defined and denoted as $\tilde{f}(x,y):=f(x,y)+f(y,x)$.
		
		The symbols $\overset{\mathbb{P}}{\rightarrow}$ and $\overset{d}{\rightarrow}$
		stand, respectively, for convergence in probability and in distribution
		of random vectors (r.v.'s for short). For a sequence of random vectors $(\xi_n)_{n\ge1}$ defined on $(\Omega,\mathcal{F},\mathbb{P})$, we write
		$\xi_n=\mathrm{o}_{\mathbb{P}}(1)$ if $\xi_n \overset{\mathbb{P}}{\rightarrow} 0$, and
		$\xi_n=\mathrm{O}_{\mathbb{P}}(1)$ if $(\xi_n)_{n\ge1}$ is bounded in probability. For $T>0$, let $(Z_{t}^{n})_{0\leq t\leq T,n\in\mathbb{N}}$
		be a sequence of c\`adl\`ag processes defined on $\left(\Omega,\mathcal{F},\mathbb{P}\right)$.
		We will write $Z^{n}\overset{u.c.p}{\longrightarrow}Z$ if $Z^{n}$
		converges uniformly on compacts in probability to $Z$. Similarly,
		$Z^{n}\overset{\mathcal{D}([0,T])}{\Longrightarrow}Z$ stands for weak
		convergence of $Z^{n}$ towards $Z$ in the Skorokhod topology.
		
		We recall that a real-valued random field $L=\{L\left(A\right):A\in\mathcal{B}_{b}(\mathbb{R}^{d})\}$
		on $\left(\Omega,\mathcal{F},\mathbb{P}\right)$ is called a \textit{homogeneous
			L\'evy basis} if it is an infinitely divisible (ID for short) independently
		scattered random measure such that
		\begin{equation}\label{Levy-Kintchine_form}
			\mathbb{E}\big(\exp(\mathbf{i}zL(A))\big)
			=
			\exp\big(\mathrm{Leb}(A)\psi(z)\big),\,\,\forall\,A\in\mathcal{B}_{b}(\mathbb{R}^{d}),z\in\mathbb{R},
		\end{equation}
		where $\mathrm{Leb}$ and $\mathcal{B}_{b}(\mathbb{R}^{d})$  denote the Lebesgue measure on $\mathbb{R}^{d}$ and the Borel sets of $\mathbb{R}^{d}$ with finite Lebesgue measure, respectively. Furthermore,
		\begin{equation}
			\psi(z):=\mathbf{i}z\gamma-\frac{1}{2}b^{2}z^{2}+\int_{\mathbb{R}\backslash\{0\}}(e^{\mathbf{i}zx}-1-\mathbf{i}zx\mathbf{1}_{\rvert x\rvert\leq1})\nu(\mathrm{d}x),\label{chexpdef}
		\end{equation}
		with $\gamma\in\mathbb{R},$ $b\geq0$ and $\nu$ a L\'evy measure,
		i.e. $\int_{\mathbb{R}\backslash\{0\}}(1\land\rvert x\rvert^{2})\nu(\mathrm{d}x)<\infty$ and $\nu$ does not charge ${0}$.
		We will refer to the ID random variable with characteristic triplet
		$\left(\gamma,b,\nu\right)$ as the \textit{L\'evy seed} of $L$, and
		it will be denoted by $L'$. As it is customary, $\left(\gamma,b,\nu\right)$
		will be called the \textit{characteristic triplet} of $L$ and $\psi$
		its \textit{characteristic exponent}. We will say that \textit{$L$
			has unit variance} if $\mathrm{Var}(L')=1$.
		
		Let $L$ be a homogeneous L\'evy basis on $\mathbb{R}^{2}$ with characteristic
		triplet $(\gamma,b,\nu)$. In addition, we set 
		\begin{equation}
			A=\left\{ (r,y):r\leq0,0\leq y\leq a(-r)\right\} .\label{eq:trawlset}
		\end{equation}
		where $a:\mathbb{R}^{+}\rightarrow\mathbb{R}^{+}$ is a non-increasing,
		continuous, and integrable function. The process following the dynamics
		\begin{equation}
			X_{t}:=L(A_{t}),\,\,\,t\in\mathbb{R},\label{trawldef-1}
		\end{equation}
		where $A_{t}:=A+(t,0)$, is called a \textit{trawl process}. To avoid trivial situations, we will always assume that $\mathrm{Leb}(A)>0$, or equivalently that $a$ is not equal to $0$ almost everywhere. From now
		on, we will allude to $A$ and $a$ as \textit{trawl set and trawl
			function}, respectively. Furthermore, $L$ is termed the \textit{background driving L\'evy basis}.
		
		It is well known that $X$ is strictly stationary
		and, in the case when $L^{\prime}$ is square integrable, its auto-covariance
		function is given by 
		\begin{equation}
			\varGamma_{X}(h):=\mathrm{Var}(L')\int_{\left|h\right|}^{\infty}a(u)\mathrm{d}u,\,\,\,h\in\mathbb{R}.\label{TrawlACF-1}
		\end{equation}
		For a detailed exposition on the basic properties of trawl processes
		and L\'evy bases we refer the reader to \cite{BNBEnthVeraat18} and
		references therein.
		
		\section{Estimating non-linear functionals of trawl functions}
		
		In what is left of this work, $X$ will denote a trawl process with trawl function $a$ and whose background driving L\'evy basis has unit variance. As discussed in the introduction, our
		main goal is to estimate the functionals
		\begin{equation}
			\Psi_{t}(g)=\int_{0}^{t}g(a(s))\mathrm{d}s;\,\,\Lambda_{t}(g):=\int_{t}^{\infty}g(a(s))\mathrm{d}s,\label{eq:deffunctionals}
		\end{equation}
		when equidistant observations of $X$, say $(X_{i\Delta_{n}})_{i=0}^{n}$,
		are available. The sample scheme considered in this work is an in-fill and long-span,
		i.e. 
		\begin{equation}
			n\uparrow\infty;\,\,\Delta_{n}\downarrow0;\,\,n\Delta_{n}\rightarrow+\infty.\label{eq:highfreq_longspan_scheme}
		\end{equation}
		To do so, we slightly modify the definition of $\check{a}$ in (\ref{eq:defahat})
		as follows:
		\begin{equation}
			\hat{a}(t):=-\frac{1}{n\Delta_{n}}\sum_{k=l}^{n-1}(X_{(k-l)\Delta_{n}}-\bar{X}_{n})\delta_{k}X,\,\,l\Delta_{n}\leq t<(l+1)\Delta_{n},\label{eq:def_ahat_modified}
		\end{equation}
		for some $l=0,\ldots,n-1$ and where $\delta_{k}X:=X_{(k+1)\Delta_{n}}-X_{k\Delta_{n}}$. It is
		not difficult to see that $\hat{a}(t)$ and $\check{a}(t)$ are asymptotically
		equivalent. The estimators for the functionals appearing in (\ref{eq:deffunctionals})
		are constructed as 
		\begin{equation}
			\begin{aligned}\Psi_{t}^{n}(g):= & \Delta_{n}\sum_{l=0}^{[t/\Delta_{n}]-1}g(\hat{a}(l\Delta_{n})),\,\,\,\,0\leq t\leq(n-1)\Delta_{n};\\
				\Lambda_{t}^{n}(g):= & \Delta_{n}\sum_{l=[t/\Delta_{n}]}^{n-1}g(\hat{a}(l\Delta_{n}));\,\,0\leq t\leq(n-1)\Delta_{n},
			\end{aligned}
			\label{eq:estimators_def}
		\end{equation}
		where $[x]$ denotes the integer part of $x\in\mathbb{R}$.
		
		\subsection{Limit theorems for $\Psi^{n}(g)$ and $\Lambda^{n}(g)$}
		
		In this part we describe first and second order asymptotics for the
		estimators introduced in (\ref{eq:estimators_def}). We start by discussing the
		consistency of these statistics. We recall that a real-valued
		function $f:\mathbb{R}\rightarrow\mathbb{R}$ is said to be of \textit{polynomial
			growth} of order $q\geq0$ if there is some $C>0$, such that 
		\[
		\rvert f(x)\rvert\leq C(1+\rvert x\rvert^{q}),\,\,\forall\,x\in\mathbb{R}.
		\]
		The reader should keep in mind that our sampling scheme satisfies
		(\ref{eq:highfreq_longspan_scheme}) and this is assumed throughout the rest of the paper. Under this set-up we have the
		following consistency result for $\Psi^{n}(g)$.
		
		\begin{theorem}\label{consitency_Phin}Let $g:\mathbb{R}\rightarrow\mathbb{R}$
			be a continuous function of polynomial growth of order $q\geq0$.
			Suppose that $\mathbb{E}(\rvert L^{\prime}\rvert^{2q\lor4})<\infty$.
			Then, $\Psi^{n}(g)\overset{u.c.p}{\rightarrow}\Psi(g).$\end{theorem}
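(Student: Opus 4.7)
The plan is to split the error into a deterministic Riemann-sum part and a stochastic estimation part, namely
\[
\Psi_t^n(g) - \Psi_t(g) = E_t^n + R_t^n,
\]
where $E_t^n := \Delta_n \sum_{l=0}^{[t/\Delta_n]-1}\bigl[g(\hat a(l\Delta_n)) - g(a(l\Delta_n))\bigr]$ and $R_t^n := \Delta_n \sum_{l=0}^{[t/\Delta_n]-1} g(a(l\Delta_n)) - \int_0^t g(a(s))\,\mathrm{d}s$, and then show each converges to $0$ uniformly on compacts in probability. Since $a$ is non-increasing, continuous, and integrable, it is bounded by $a(0+) < \infty$, so $g\circ a$ is continuous and bounded on every $[0,T]$; standard Riemann-sum convergence then yields $\sup_{t\in[0,T]}|R_t^n|\to 0$, disposing of the deterministic part immediately.

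For the stochastic part I would bound the supremum by the full sum,
\[
\sup_{t\in[0,T]}|E_t^n| \leq \Delta_n \sum_{l=0}^{[T/\Delta_n]-1}\lvert g(\hat a(l\Delta_n))-g(a(l\Delta_n))\rvert,
\]
and prove $L^1$-convergence of the right-hand side through a truncation argument. For $M>0$, letting $\omega_{g,M}$ denote the modulus of continuity of $g$ on $[-M,M]$, the continuity of $g$ together with polynomial growth of order $q$ yields
\[
\lvert g(\hat a)-g(a)\rvert \leq \omega_{g,M}(\lvert \hat a - a\rvert) + C\bigl(1 + \lvert \hat a\rvert^q + \lvert a\rvert^q\bigr)\mathbf{1}_{\{\lvert \hat a\rvert \vee \lvert a\rvert > M\}}.
\]
The first summand is bounded, continuous in its argument, and vanishes at zero, so it is controlled by a uniform-in-$l$ convergence in probability of $\hat a(l\Delta_n)-a(l\Delta_n)$ to $0$; the second is controlled by a uniform-in-$l,n$ moment bound on $\hat a$ combined with Markov's inequality, whose contribution tends to $0$ as $M\to\infty$. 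Taking the two limits in the order $n\to\infty$, $\delta\to 0$, $M\to\infty$ yields $\mathbb{E}\sup_{t\leq T}|E_t^n|\to 0$.

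The two analytic inputs required are therefore (i) a uniform-in-$l$ consistency $\sup_{l\leq [T/\Delta_n]-1}\mathbb{P}(\lvert \hat a(l\Delta_n)-a(l\Delta_n)\rvert>\delta)\to 0$ for every $\delta>0$, and (ii) a uniform-in-$l,n$ moment bound $\sup_{l,n}\mathbb{E}\lvert \hat a(l\Delta_n)\rvert^{q\vee 2}\leq C$ together with enough additional integrability to make the polynomial tail uniformly integrable. Because $\hat a$ is a centered quadratic form in $X$, and the hypothesis $\mathbb{E}\lvert L'\rvert^{2q\vee 4}<\infty$ translates via $\mathrm{Leb}(A)<\infty$ to $X\in L^{2q\vee 4}$, both inputs reduce to direct moment expansions of $(X_{(k-l)\Delta_n}-\bar X_n)\delta_k X$, borrowing with minor adjustments from the analysis of $\check a$ in \cite{SauriVeraart23} and the asymptotic equivalence of $\check a$ and $\hat a$. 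The main technical obstacle is bookkeeping these bounds \emph{uniformly in} $l$ for $l\leq [T/\Delta_n]-1$, where $l$ itself grows with $n$: it is this uniformity that lets the $\Delta_n$-weighted sum in $E^n$ absorb the accumulated estimation error, and the exponent $2q\vee 4$ is exactly what is needed to balance the fourth-order moments required for consistency of the sample covariance against the $2q$-th moments required for the polynomial-growth tail.
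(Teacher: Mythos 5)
Your proposal is correct and follows essentially the same route as the paper: both control the Riemann-sum discretization error by continuity of $g\circ a$, and then handle the stochastic error by a truncation at level $M$, splitting into a bounded part (controlled by uniform-in-$l$ consistency of $\hat a$) and a polynomial tail (controlled by the moment bounds on $\hat a(l\Delta_n)-a(l\Delta_n)$ supplied by the decomposition \eqref{eq:decompahat_atilde_plusbias} and Lemma \ref{moment_estimates_ahat}, which require exactly $\mathbb{E}(\lvert L'\rvert^{2q\lor 4})<\infty$). The only cosmetic differences are that the paper truncates $g$ by a smooth cutoff $\psi_m$ rather than truncating its argument, and reduces uniformity in $t$ to pointwise convergence by first assuming $g\ge 0$ (so that $\Psi^n_\cdot(g)$ is monotone) instead of bounding the supremum by the full absolute sum; note also that the uniform integrability of $\lvert\hat a\rvert^{q}$ you invoke needs no moments beyond the stated hypothesis, since the centered part $\tilde a-\mathbb{E}(\tilde a)$ has $L^{q}$-norm of order $(n\Delta_n)^{-1/2}$ while the remaining terms in \eqref{eq:decompahat_atilde_plusbias} are bounded or vanish.
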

		
		Let us now turn our attention to $\Lambda^{n}(g)$. To guarantee integrability
		of the mapping $s\mapsto g(a(s))$, and hence the well-definedness of
		$\Lambda_{t}(g)$, we need to restrict the type of test functions. For a given $p,q\geq0$, $d\in\mathbb{N}$, we will
		write $\mathfrak{C}_{p,q}^{d}$ for the family of functions of class
		$\mathcal{C}^{d}$ satisfying that $g^{(j)}(0)=0$ for $j=0,1,\ldots,d-1$
		and that $g^{(d)}(x)=\mathrm{O}(\rvert x\rvert^{p})$ as $x\rightarrow0$
		and $g^{(d)}(x)=\mathrm{O}(\rvert x\rvert^{q})$ as $x\rightarrow+\infty$.
		For example, the power function $g(x)=\rvert x\rvert^{\alpha}$, for
		$\alpha>1$ belongs to $\mathfrak{C}_{p,q}^{[\alpha]}$ with $p=q=\alpha-[\alpha]$.
		
		\begin{remark}\label{integrability_C2_pq}Note that if $g\in\mathfrak{C}_{p,q}^{d}$,
			an induction argument along with the Mean-Value Theorem imply that
			$g^{(j)}(x)=\mathrm{O}(\rvert x\rvert^{d-j+p})$, as $x\rightarrow0$,
			for all $j=1,\ldots,d$. Thus, in view of the fact $a$ is decreasing and bounded by $a(0),$
			we can find a constant $C=C(a(0))>0$, such that for all $s\geq0$
			\begin{equation}
				\rvert g^{(j)}(a(s))\rvert\leq Ca(s)^{d-j+p},\,\,\,j=1,\ldots,d.\label{eq:boundsforg_gprime}
			\end{equation}
			It follows that the mappings $s\in[0,+\infty)\mapsto g^{(j)}(a(s))$
			are continuous, bounded, and integrable for $j=0,1,\ldots,d\land(d-1+[p])$.
			In particular, $\Lambda_{t}(g)$ is well-defined for every $t\geq0$.
		\end{remark}
		
		\begin{theorem}\label{consitency_Lambdan} Let $g\in\mathfrak{C}_{p,q}^{1}$
			with $p,q>1$ and assume that $\mathbb{E}(\rvert L^{\prime}\rvert^{2((q\lor p)+1)})<\infty$.
			Then, as $n\rightarrow\infty$, $\Lambda^{n}(g)\overset{u.c.p}{\rightarrow}\Lambda(g).$\end{theorem}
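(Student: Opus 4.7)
The plan is to reduce the uniform-in-$t$ claim to consistency at a single point via the telescoping identities $\Lambda^n_t(g) = \Lambda^n_0(g) - \Psi^n_t(g)$ and $\Lambda_t(g) = \Lambda_0(g) - \Psi_t(g)$, which give
$$\Lambda^n_t(g) - \Lambda_t(g) = \bigl[\Lambda^n_0(g) - \Lambda_0(g)\bigr] - \bigl[\Psi^n_t(g) - \Psi_t(g)\bigr].$$
Since any $g \in \mathfrak{C}^1_{p,q}$ is of polynomial growth of order $q+1$, the moment hypothesis $\mathbb{E}(\rvert L'\rvert^{2(q \vee p + 1)}) < \infty$ implies $\mathbb{E}(\rvert L'\rvert^{2(q+1) \vee 4}) < \infty$, so Theorem \ref{consitency_Phin} yields $\Psi^n(g) \overset{u.c.p}{\longrightarrow} \Psi(g)$. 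It therefore suffices to show $\Lambda^n_0(g) \overset{\mathbb{P}}{\rightarrow} \Lambda_0(g)$.

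To this end, I would split
$$\Lambda^n_0(g) - \Lambda_0(g) = \Bigl\{\Delta_n \sum_{l=0}^{n-1} g(a(l\Delta_n)) - \int_0^\infty g(a(s))\,\mathrm{d}s\Bigr\} + \Delta_n \sum_{l=0}^{n-1}\bigl[g(\hat{a}(l\Delta_n)) - g(a(l\Delta_n))\bigr].$$
The first braced term is deterministic and vanishes by standard Riemann-sum estimates: Remark \ref{integrability_C2_pq} gives $g(a(s)) = \mathrm{O}(a(s)^{1+p})$, which lies in $L^1(\mathbb{R}_+)$ because $a \leq a(0)$ and $a \in L^1$, so the term is bounded by $\mathrm{O}(\Delta_n) + \int_{n\Delta_n}^\infty \rvert g(a(s))\rvert\,\mathrm{d}s = \mathrm{o}(1)$. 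For the stochastic term, the mean-value theorem yields $g(\hat{a}(l\Delta_n)) - g(a(l\Delta_n)) = g'(\xi_{l,n})[\hat{a}(l\Delta_n) - a(l\Delta_n)]$ with $\xi_{l,n}$ between $a(l\Delta_n)$ and $\hat{a}(l\Delta_n)$; combining the $\mathrm{O}(\rvert x\rvert^p)$ behavior of $g'$ at $0$ with its $\mathrm{O}(\rvert x\rvert^q)$ growth at infinity and the continuity of $g'$ yields the global bound $\rvert g'(x)\rvert \leq C(\rvert x\rvert^p + \rvert x\rvert^q)$. Plugging this in and using $\rvert\xi_{l,n}\rvert^r \leq C_r[a(l\Delta_n)^r + \rvert \hat{a}(l\Delta_n) - a(l\Delta_n)\rvert^r]$, the expected absolute value of the stochastic error is dominated by a constant multiple of
$$\sum_{l=0}^{n-1} \Delta_n \bigl[a(l\Delta_n)^p + a(l\Delta_n)^q\bigr]\, \mathbb{E}\rvert\hat{a}(l\Delta_n) - a(l\Delta_n)\rvert + n \Delta_n \max_{l} \bigl\{\mathbb{E}\rvert\hat{a}(l\Delta_n) - a(l\Delta_n)\rvert^{p+1} + \mathbb{E}\rvert\hat{a}(l\Delta_n) - a(l\Delta_n)\rvert^{q+1}\bigr\}.$$

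Granted a uniform-in-$l$ moment bound $\mathbb{E}\rvert\hat{a}(l\Delta_n) - a(l\Delta_n)\rvert^r \leq C_r (n\Delta_n)^{-r/2}$ for $r \in \{1, p+1, q+1\}$, the first sum above is $\mathrm{O}((n\Delta_n)^{-1/2})$ because $a^p, a^q \in L^1(\mathbb{R}_+)$ (using $a \leq a(0)$ and $a \in L^1$), and the second is $\mathrm{O}((n\Delta_n)^{-(p-1)/2}) + \mathrm{O}((n\Delta_n)^{-(q-1)/2})$, both vanishing since $p, q > 1$. The principal obstacle is therefore the derivation of these uniform higher-order moment bounds: $\hat{a}(l\Delta_n) - a(l\Delta_n)$ is, after recentering, a weighted quadratic functional of the increments $(\delta_k X)_{k=0}^{n-1}$, and controlling its $r$th moment uniformly in $l$ will require Rosenthal-type inequalities for weakly dependent stationary sequences together with a careful use of the $m$-dependence-like tail decay induced by the trawl geometry; the assumption $\mathbb{E}(\rvert L'\rvert^{2(q \vee p + 1)}) < \infty$ is calibrated precisely to deliver the $2(p+1)$- and $2(q+1)$-th moments of $L'$ needed to close this estimate.
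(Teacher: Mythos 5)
Your architecture is essentially the paper's: a mean-value-theorem split of $\Lambda_{0}^{n}(g)-\Lambda_{0}(g)$ into a deterministic Riemann error and a stochastic term, the bound $\rvert\dot{g}(x)\rvert\lesssim\rvert x\rvert^{p}+\rvert x\rvert^{q}$, and a weighted moment estimate on $\hat{a}-a$ closed by $p,q>1$; your reduction of the uniform-in-$t$ claim to the single point $t=0$ via $\Lambda_{t}^{n}(g)=\Lambda_{0}^{n}(g)-\Psi_{t}^{n}(g)$ and Theorem \ref{consitency_Phin} is a clean shortcut (the paper bounds $\sup_{t}$ of the error directly, which costs nothing because its bound is monotone in the summation range). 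The problem is that the step you label the ``principal obstacle'' is in fact the entire technical content of the theorem, and you leave it unproven. The paper devotes Lemma \ref{moment_estimates_ahat} to it, and the argument is not an off-the-shelf Rosenthal inequality for weakly dependent sequences: it rests on the exact martingale-difference decomposition \eqref{eq:second_decomp_mtgdiff} of $\tilde{a}(l\Delta_{n})-\mathbb{E}(\tilde{a}(l\Delta_{n}))$ over the partition cells $\mathcal{P}_{A}^{n}(i,j)$ of the trawl set, with respect to the ``spatial'' filtration $\mathscr{G}_{j}^{n}$, combined with iterated Rosenthal and von Bahr--Esseen estimates and the moment bounds of Lemma \ref{moment_estimates_chi's} imported from \cite{SauriVeraart23}. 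A proof that merely announces this bound has not proved the theorem.

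There is also a substantive inaccuracy in the form of the bound you posit. By \eqref{eq:decompahat_atilde_plusbias}, $\hat{a}(l\Delta_{n})-a(l\Delta_{n})$ equals the centred fluctuation $\tilde{a}(l\Delta_{n})-\mathbb{E}(\tilde{a}(l\Delta_{n}))$ --- which does satisfy $\mathbb{E}(\rvert\cdot\rvert^{r})\lesssim(n\Delta_{n})^{-r/2}$ uniformly in $l$ --- \emph{plus} deterministic bias terms, in particular the discretization error $\frac{1}{\Delta_{n}}\int_{l\Delta_{n}}^{(l+1)\Delta_{n}}[a(s)-a(l\Delta_{n})]\mathrm{d}s$. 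In the setting of this theorem $a$ is only continuous, non-increasing and integrable (Assumption \ref{as:trawl} is not in force), so at a fixed $l$ this error is controlled only by the oscillation $a(l\Delta_{n})-a((l+1)\Delta_{n})$, whose decay is governed by the unquantified modulus of continuity of $a$ and need not be $\mathrm{O}((n\Delta_{n})^{-1/2})$; a logarithmic modulus together with $\Delta_{n}=n^{-0.1}$ already defeats the pointwise rate. These contributions are only controllable in aggregate, by telescoping $\sum_{l}[a(l\Delta_{n})-a((l+1)\Delta_{n})]\leq a(0)$ as in \eqref{eq:error_last_sum}, which is precisely why the paper states Lemma \ref{lemma_error_term.} as a bound on the weighted sum $\Delta_{n}\sum_{l}a(l\Delta_{n})^{v}\rvert\hat{a}(l\Delta_{n})-a(l\Delta_{n})\rvert^{r}$ rather than on individual moments of $\hat{a}-a$. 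Your final display is the right quantity to estimate, but to bound it you must first separate the martingale part from the bias parts and treat the latter by summation, as in Lemmas \ref{moment_estimates_ahat} and \ref{lemma_error_term.}.
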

		The previous result does not cover the quadratic case, i.e. when $p\land q=1$.
		This is because in this situation $\Lambda^{n}(g)$ might be biased,
		as the following result shows.
		\begin{theorem}\label{bias_quadratic}Let $g(x)=x^{2}$ and assume
			that $\mathbb{E}(\rvert L^{\prime}\rvert^{4})<\infty$. Then, as $n\rightarrow\infty$,
			$\Lambda_{0}^{n}(g)\overset{\mathbb{P}}{\rightarrow}2\Lambda_{0}(g).$
		\end{theorem}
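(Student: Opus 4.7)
The plan is to decompose
\[
\hat{a}(l\Delta_n)^2 = (\hat{a}(l\Delta_n)-a(l\Delta_n))^2 + 2a(l\Delta_n)(\hat{a}(l\Delta_n)-a(l\Delta_n)) + a(l\Delta_n)^2
\]
and analyse the three resulting pieces of $\Lambda_0^n(g) = \Delta_n \sum_{l=0}^{n-1}\hat{a}(l\Delta_n)^2$ separately. The deterministic piece $\Delta_n \sum_l a(l\Delta_n)^2$ is a Riemann sum that converges to $\int_0^\infty a(s)^2\mathrm{d}s = \Lambda_0(g)$, so the doubling in the limit must come from the squared-deviation piece. For the mean, using $\mathbb{E}[X_{(k-l)\Delta_n}\delta_k X] = -\int_{l\Delta_n}^{(l+1)\Delta_n}a(u)\mathrm{d}u$ together with a negligible $O((n\Delta_n)^{-1})$ correction coming from $\bar{X}_n$, one finds $\mathbb{E}[\hat{a}(l\Delta_n)] = \tfrac{n-l}{n\Delta_n}\int_{l\Delta_n}^{(l+1)\Delta_n}a(u)\mathrm{d}u + o(1)$, so the leading bias is $-\tfrac{l}{n}a(l\Delta_n)$. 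Consequently the expected cross term equals $-\tfrac{2}{n\Delta_n}\int_0^{n\Delta_n}s\,a(s)^2\mathrm{d}s + o(1) \to 0$, and its fluctuation (linear in the centred $\hat{a}(l\Delta_n)$ with weights $a(l\Delta_n)$) has variance controllable using only $\mathbb{E}|L'|^4<\infty$ and the integrability of $a^2$.

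The heart of the argument is the squared-deviation piece. An Isserlis/cumulant expansion, valid under $\mathbb{E}|L'|^4<\infty$, yields
\[
\mathrm{Var}(\hat{a}(l\Delta_n)) = \frac{n-l}{n^2\Delta_n}\sigma_a^2(l\Delta_n) + o\bigl((n\Delta_n)^{-1}\bigr),
\]
where $\sigma_a^2$ is the asymptotic variance from \cite{SauriVeraart23}. Taking limits in its explicit formula gives $\sigma_a^2(t) \to 2\int_0^\infty a(u)^2\mathrm{d}u$ as $t\to\infty$, and a Cesaro argument applied to $\Delta_n\sum_{l=0}^{n-1}\mathrm{Var}(\hat{a}(l\Delta_n)) = \tfrac{1}{n}\sum_{l=0}^{n-1}(1-l/n)\sigma_a^2(l\Delta_n)$ then produces the limit $\int_0^1(1-\lambda)\mathrm{d}\lambda \cdot 2\int_0^\infty a(u)^2\mathrm{d}u = \int_0^\infty a(s)^2\mathrm{d}s$. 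The crucial point is that the weight $(1-l/n)$ is exactly what halves the naive tail limit $2\Lambda_0(g)$ and gives precisely $\Lambda_0(g)$. Summing the three contributions yields $\mathbb{E}[\Lambda_0^n(g)] \to \Lambda_0(g)+0+\Lambda_0(g) = 2\Lambda_0(g)$.

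The main obstacle is upgrading this first-moment convergence to convergence in probability: one needs $\mathrm{Var}(\Lambda_0^n(g))\to 0$, a fourth-order quantity in $\hat{a}$ whose naive bound would require $\mathbb{E}|L'|^8<\infty$. I would circumvent this via the Wiener--Poisson chaos decomposition of $\hat{a}(l\Delta_n)-\mathbb{E}[\hat{a}(l\Delta_n)]$: the squared deviations decompose into orthogonal chaos of order at most four, whose variances can each be controlled using only second- and fourth-order cumulants of $L'$, making the overall variance control compatible with the assumed $\mathbb{E}|L'|^4<\infty$.
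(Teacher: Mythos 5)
Your proposal is correct in substance and reaches the right limit, but it gets there by a genuinely different route for the key computation. The paper also reduces everything to the squared-deviation piece $B_n=\Delta_n\sum_l[\hat a(l\Delta_n)-a(l\Delta_n)]^2$, but then substitutes the centred statistic $\tilde a(l\Delta_n)-\mathbb{E}(\tilde a(l\Delta_n))$, expands it into the martingale-difference terms $\xi^{(1)},\dots,\xi^{(4)}$ of \eqref{errorstpos}, shows that only $\xi_{j,l,n}^{\prime(1)}=\chi_j\beta_{j,j-1,l}^{(2)}$ and $\xi_{j,l,n}^{(3)}$ survive, and identifies the limit by summing the exact conditional-variance identities \eqref{eq:cond_variance}; concentration around the mean is obtained from the same martingale structure and the fourth-moment product formulas of Lemma \ref{moment_estimates_ahat-1}. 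You instead import the pointwise asymptotic variance $\sigma_a^2(t)$ of \cite{SauriVeraart23}, observe that $\sigma_a^2(t)\to 2\int_0^\infty a(u)^2\mathrm{d}u$ as $t\to\infty$, and extract the constant from the Ces\`aro weight $\tfrac1n\sum_l(1-l/n)\to\tfrac12$; this is a more conceptual explanation of where the factor $2$ comes from (the full-sample limit $2\Lambda_0(g)$ is halved by the shrinking effective sample size $n-l$ at lag $l$), and it is exactly consistent with the paper's answer. Two caveats. First, your second piece is $\mathbb{E}[(\hat a-a)^2]=\mathrm{Var}(\hat a)+(\mathbb{E}\hat a-a)^2$, not just the variance; you must also check that $\Delta_n\sum_l(\mathbb{E}\hat a(l\Delta_n)-a(l\Delta_n))^2\to0$, which follows from the bias $-\tfrac{l}{n}a(l\Delta_n)$ together with $sa(s)\to0$ and the bounded variation of $a$, plus the telescoping discretization error — minor, but it is a separate term. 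Second, your expansion $\mathrm{Var}(\hat a(l\Delta_n))=\tfrac{n-l}{n^2\Delta_n}\sigma_a^2(l\Delta_n)+\mathrm{o}((n\Delta_n)^{-1})$ must hold \emph{uniformly} in $l$ (including $l$ comparable to $n$, where $t=l\Delta_n\to\infty$), which is not what \cite{SauriVeraart23} proves for fixed $t$; establishing it amounts to redoing the bookkeeping the paper carries out in its appendix. Your chaos-decomposition plan for $\mathrm{Var}(\Lambda_0^n(g))\to0$ is viable and rests on the same fact the paper uses — fourth moments of quadratic functionals of $L$ are controlled by $\mathfrak{K}_4=\int x^4\nu(\mathrm{d}x)$ alone, not by eighth moments of $L'$ — so it is the paper's martingale/product-formula argument in different clothing rather than a new idea; it would still need the covariance decay in $|l-l'|$ to be made explicit.
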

		
		To deal with the previous situation, we proceed as in \cite{SauriVeraart23} and  modify $\Lambda^{n}(g)$ as
		follows: We introduce a tuning parameter $N_{n}\in\mathbb{N}$, $N_{n}\leq n$,
		such that 
		\begin{equation}	i)\,N_{n}\uparrow\infty;\,\,ii)\,N_{n}/n\rightarrow0;\,\,iii)\,N_{n}\Delta_{n}\rightarrow+\infty.\label{eq:window_assumption}
		\end{equation}  
		We set
		\begin{equation}\label{def_Lambdabar}
			\bar{\Lambda}_{t}^{n}(g):=\Delta_{n}\sum_{l=[t/\Delta_{n}]}^{N_{n}-1}g(\hat{a}(l\Delta_{n}));\,\,0\leq t\leq(N_{n}-1)\Delta_{n}.
		\end{equation}

		The sequence of processes $\bar{\Lambda}^{n}$, unlike $\Lambda^{n}$,
		is consistent in the quadratic case. More precisely:
		\begin{theorem}\label{consitency_Lambdan_bar}Consider $N_{n}\in\mathbb{N}$
			as in (\ref{eq:window_assumption}). Let $p,q\geq1$ and assume
			that $\mathbb{E}(\rvert L^{\prime}\rvert^{2((q\lor p)+1)})<\infty$.
			Then for all $g\in\mathfrak{C}_{p,q}^{1}$, $	\bar{\Lambda}_{t}^{n}(g)\overset{u.c.p}{\rightarrow}\Lambda(g)$, as $n\rightarrow\infty.$
		\end{theorem}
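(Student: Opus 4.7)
The plan is to decouple the ucp convergence of $\bar{\Lambda}^{n}(g)$ into two ingredients: the ucp convergence of $\Psi^{n}(g)$ furnished by Theorem \ref{consitency_Phin}, and the in-probability convergence of the single random variable $\bar{\Lambda}_{0}^{n}(g)$ to $\Lambda_{0}(g)$. For $n$ large enough that $(N_{n}-1)\Delta_{n}\geq T$ (which eventually holds since $N_{n}\Delta_{n}\to\infty$), every $t\in[0,T]$ satisfies
\begin{equation*}
\bar{\Lambda}_{t}^{n}(g)-\Lambda_{t}(g)=\bigl[\bar{\Lambda}_{0}^{n}(g)-\Lambda_{0}(g)\bigr]-\bigl[\Psi_{t}^{n}(g)-\Psi_{t}(g)\bigr].
\end{equation*}
By Remark \ref{integrability_C2_pq}, $g$ itself is of polynomial growth of order $q+1$, and since $p,q\geq1$ the hypothesis $\mathbb{E}\rvert L'\rvert^{2(p\vee q+1)}<\infty$ dominates $\mathbb{E}\rvert L'\rvert^{2(q+1)\vee 4}<\infty$. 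Theorem \ref{consitency_Phin} thus yields $\sup_{t\in[0,T]}\rvert\Psi_{t}^{n}(g)-\Psi_{t}(g)\rvert\overset{\mathbb{P}}{\to}0$. Since the first bracket above does not depend on $t$, it suffices to prove $\bar{\Lambda}_{0}^{n}(g)\overset{\mathbb{P}}{\to}\Lambda_{0}(g)$.

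I would then decompose $\bar{\Lambda}_{0}^{n}(g)-\Lambda_{0}(g)=I_{n}+II_{n}+III_{n}$, where $I_{n}:=\Delta_{n}\sum_{l=0}^{N_{n}-1}[g(\hat{a}(l\Delta_{n}))-g(a(l\Delta_{n}))]$ isolates the estimation error, $II_{n}:=\Delta_{n}\sum_{l=0}^{N_{n}-1}g(a(l\Delta_{n}))-\int_{0}^{N_{n}\Delta_{n}}g(a(s))\mathrm{d}s$ is a deterministic Riemann error, and $III_{n}:=-\int_{N_{n}\Delta_{n}}^{\infty}g(a(s))\mathrm{d}s$ is a deterministic tail. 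By Remark \ref{integrability_C2_pq} and monotonicity of $a$, $\rvert g(a(s))\rvert\leq Ca(s)^{1+p}\leq Ca(0)^{p}a(s)$, so $g\circ a\in L^{1}([0,\infty))$ and $III_{n}\to 0$. The term $II_{n}\to 0$ by standard Riemann-sum approximation, splitting $[0,N_{n}\Delta_{n}]$ into a fixed compact $[0,T']$ where uniform continuity of $g\circ a$ controls the error and its complement where the $L^{1}$ tail of $g\circ a$ is small uniformly in $n$.

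The heart of the proof is $I_{n}$. The mean value theorem gives $g(\hat{a}(l\Delta_{n}))-g(a(l\Delta_{n}))=g'(\eta_{l})(\hat{a}(l\Delta_{n})-a(l\Delta_{n}))$ with $\eta_{l}$ between the two arguments, and the growth of $g'$ yields $\rvert g'(\eta_{l})\rvert\leq C(\rvert\eta_{l}\rvert^{p}+\rvert\eta_{l}\rvert^{q})\leq C(\rvert\hat{a}(l\Delta_{n})-a(l\Delta_{n})\rvert^{p}+a(l\Delta_{n})^{p}+\rvert\hat{a}(l\Delta_{n})-a(l\Delta_{n})\rvert^{q}+a(l\Delta_{n})^{q})$, using $(x+y)^{r}\leq 2^{r-1}(x^{r}+y^{r})$ for $r\geq 1$. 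Extending the moment bounds of \cite{SauriVeraart23} (which the assumption $\mathbb{E}\rvert L'\rvert^{2(p\vee q+1)}<\infty$ supports), one obtains the uniform estimates $\mathbb{E}\rvert\hat{a}(l\Delta_{n})-a(l\Delta_{n})\rvert^{r+1}\leq C(n\Delta_{n})^{-(r+1)/2}$ for $r=p,q$. Multiplying by $\Delta_{n}$, summing, and applying Markov's inequality, I reach
\begin{equation*}
\mathbb{E}\rvert I_{n}\rvert\leq CN_{n}\Delta_{n}\bigl((n\Delta_{n})^{-(p+1)/2}+(n\Delta_{n})^{-(q+1)/2}\bigr)+C(n\Delta_{n})^{-1/2}\Delta_{n}\sum_{l=0}^{N_{n}-1}\bigl(a(l\Delta_{n})^{p}+a(l\Delta_{n})^{q}\bigr).
\end{equation*}
Because $a^{r}\leq a(0)^{r-1}a$ for $r\geq 1$ and $a\in L^{1}$, the last sum is $O(1)$, so the second term is $O((n\Delta_{n})^{-1/2})\to 0$. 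The first term is bounded by $CN_{n}\Delta_{n}\cdot(n\Delta_{n})^{-1}=CN_{n}/n$ (using $(p+1)/2,(q+1)/2\geq 1$), which vanishes by (\ref{eq:window_assumption}).

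The main obstacle -- and the conceptual reason the truncation $N_{n}$ enters at all -- is precisely the first, ``noise-on-noise'' piece of the last display. Without truncation it would contribute an $O(1)$ bias, the same mechanism producing the factor-of-two inflation in Theorem \ref{bias_quadratic}; the requirement $N_{n}/n\to 0$ in (\ref{eq:window_assumption}) is exactly what neutralizes it and explains why one cannot dispense with the window when $p\wedge q=1$.
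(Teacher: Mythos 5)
Your overall route is essentially the paper's: the paper also reduces the problem to the Mean--Value--Theorem decomposition $g(\hat{a})-g(a)=\dot{g}(\theta_l)(\hat{a}-a)$, bounds $\lvert\dot{g}(\theta_l)\rvert$ by $\lvert\hat{a}-a\rvert^{p\wedge q}+a^{p\wedge q}$ (plus the higher power), and kills the resulting $\lvert\hat{a}-a\rvert^{p\wedge q+1}$ ``noise-on-noise'' term using exactly the bound $N_n\Delta_n\cdot(n\Delta_n)^{-(p\wedge q+1)/2}\lesssim N_n/n\to0$; your diagnosis of why the window is needed when $p\wedge q=1$ is also the paper's. The only genuinely different (and perfectly legitimate) move is reducing u.c.p.\ convergence to convergence of the single variable $\bar{\Lambda}_0^n(g)$ via the identity $\bar{\Lambda}_t^n=\bar{\Lambda}_0^n-\Psi_t^n$ and Theorem \ref{consitency_Phin}, whereas the paper bounds $\sup_t\lvert U_t^n\rvert$ directly in a unified framework.

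One step is overstated: the claimed uniform bound $\mathbb{E}\rvert\hat{a}(l\Delta_n)-a(l\Delta_n)\rvert^{r+1}\leq C(n\Delta_n)^{-(r+1)/2}$ is false in general. By \eqref{eq:decompahat_atilde_plusbias}, $\hat{a}(l\Delta_n)-a(l\Delta_n)$ contains the deterministic discretization bias $\frac{1}{\Delta_n}\int_{l\Delta_n}^{(l+1)\Delta_n}[a(s)-a(l\Delta_n)]\mathrm{d}s$, which is only bounded by $a(l\Delta_n)-a((l+1)\Delta_n)$; since Theorem \ref{consitency_Lambdan_bar} does not assume the smoothness of Assumption \ref{as:trawl}, this increment need not be $\mathrm{O}((n\Delta_n)^{-1/2})$ uniformly in $l$ (only the centred statistic $\tilde{a}-\mathbb{E}\tilde{a}$ enjoys the $(n\Delta_n)^{-r/2}$ moment bound of Lemma \ref{moment_estimates_ahat}). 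The fix is routine and is what the paper's Lemma \ref{lemma_error_term.} does: treat the bias separately and use the telescoping estimate $\Delta_n\sum_{l}[a(l\Delta_n)-a((l+1)\Delta_n)]\leq a(0)\Delta_n$, which contributes an extra $\mathrm{o}(1)$ to your display for $\mathbb{E}\rvert I_n\rvert$ without affecting the conclusion. You should either invoke that lemma or add this argument; as written, the intermediate moment claim does not follow from the cited bounds.
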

		We now discuss second-order asymptotics for our statistics. As usual, this requires stronger assumptions. We essentially follow the set-up of \cite{SauriVeraart23}.
		
		\begin{assumption}\label{as:trawl} The sampling scheme satisfies
			that $n\Delta_{n}^{3}\rightarrow0$ as $n\rightarrow\infty$. Furthermore,
			the trawl function admits the representation $a(s)=\int_{s}^{\infty}\phi(y)dy$,
			for $s\geq0$, where $\phi$ is a strictly positive c\`adl\`ag  function such that $\phi(s)=\mathrm{O}(s^{-\alpha-1})$ as $s\rightarrow+\infty$
			for some $\alpha>1$.
			
		\end{assumption} 
		
		Before presenting the Central Limit Theorems for $\Psi^{n}, \Lambda^{n}$, and $\bar{\Lambda}^{n}$ let
		us introduce some functions that will serve to describe their asymptotic
		variance. For $s,r\geq0$ and $\mathfrak{K}_{4}:=\int x^{4}\nu(dx)$,
		we put
		\begin{align*}
			\sigma_{1}(s,r) & :=\mathfrak{K}_{4}a(s\lor r);\\
			\sigma_{2}(s,r) & :=\int_{0}^{\infty}a(u)a(\rvert u-(s-r)\rvert)\mathrm{sgn}(u-(s-r))\mathrm{d}u;\\
			\sigma_{3}(s,r) & :=\int_{0}^{\infty}a(u+r)a(\rvert s-u\rvert)\mathrm{sgn}(s-u)\mathrm{d}u.
		\end{align*}
		
		\begin{theorem}\label{CLT_Phin}Let Assumption \ref{as:trawl} hold.
			If $g:\mathbb{R}\rightarrow\mathbb{R}$ is of class $\mathcal{C}^{2}$
			with $g^{(2)}$ being of polynomial growth of order $q\geq0$,
			and $\mathbb{E}(\rvert L^{\prime}\rvert^{2(q+2)\lor r_{0}})<\infty$
			for some $r_{0}>4$, then as $n\rightarrow\infty$,
			\[
			\sqrt{n\Delta_{n}}(\Psi^{n}(g)-\Psi(g))\overset{\mathcal{D}([0,T])}{\Longrightarrow}Z,
			\]
			where $(Z_{t})_{t\geq0}$ is a centred Gaussian process with 
			\[
			\mathbb{E}(Z_{t}Z_{s})=\int_{0}^{t}\int_{0}^{s}g^{\prime}(a(u))\Sigma_{a}(u,r)g^{\prime}(a(r))\mathrm{d}r\mathrm{d}u,\,\,\,t,s\geq0,
			\]
			in which
			\begin{equation}
				\begin{aligned}\Sigma_{a}(u,r) & :=\sigma_{1}(u,r)+\tilde{\sigma}_{2}(u,r)+\tilde{\sigma}_{3}(u,r),\end{aligned}
				\label{eq:AVAR_ALL}
			\end{equation}
			with $\tilde{\sigma}_{2}$ and $\tilde{\sigma}_{3}$ the symmetrizations (see Section \ref{sec:Preliminaries}) of $\sigma_{2}$ and $\sigma_{3}$, respectively.
		\end{theorem}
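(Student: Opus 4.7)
My strategy is to linearise $\Psi^{n}(g)$ around $\Psi(g)$ by a second-order Taylor expansion. Writing $\hat{a}_{l}:=\hat{a}(l\Delta_{n})$ and $a_{l}:=a(l\Delta_{n})$, I have
\[
g(\hat{a}_{l})=g(a_{l})+\dot{g}(a_{l})(\hat{a}_{l}-a_{l})+\tfrac{1}{2}\ddot{g}(\xi_{l})(\hat{a}_{l}-a_{l})^{2},
\]
for some $\xi_{l}$ between $\hat{a}_{l}$ and $a_{l}$. Multiplying by $\Delta_{n}$, summing over $0\leq l\leq[t/\Delta_{n}]-1$, subtracting $\Psi_{t}(g)$ and scaling by $\sqrt{n\Delta_{n}}$ produces the decomposition
\[
\sqrt{n\Delta_{n}}\bigl(\Psi_{t}^{n}(g)-\Psi_{t}(g)\bigr)=B_{t}^{n}+M_{t}^{n}+R_{t}^{n},
\]
where $B^{n}$ is the deterministic Riemann-sum error of $\int_{0}^{t}g(a(s))\mathrm{d}s$, $M_{t}^{n}:=\sqrt{n\Delta_{n}}\,\Delta_{n}\sum_{l}\dot{g}(a_{l})(\hat{a}_{l}-a_{l})$ is the main linear term, and $R^{n}$ is the quadratic remainder. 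The goal is to show that $B^{n},R^{n}=\mathrm{o}_{\mathbb{P}}(1)$ uniformly on $[0,T]$, and that $M^{n}\Longrightarrow Z$ in the Skorokhod topology.

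\textbf{Negligibility of $B^{n}$ and $R^{n}$.} Under Assumption \ref{as:trawl} the trawl function $a$ is Lipschitz on $[0,T]$ (with derivative $-\phi$), hence so is $g\circ a$ since $g\in\mathcal{C}^{2}$. This gives $\sup_{t\leq T}\lvert B_{t}^{n}\rvert\leq C\sqrt{n\Delta_{n}}\cdot\Delta_{n}=C\sqrt{n\Delta_{n}^{3}}\to0$, using precisely the extra sampling condition $n\Delta_{n}^{3}\to0$ built into Assumption \ref{as:trawl}. For $R^{n}$, I combine the pointwise second-moment bound $\mathbb{E}[(\hat{a}_{l}-a_{l})^{2}]=\mathrm{O}((n\Delta_{n})^{-1})$ (available from \cite{SauriVeraart23}) with the polynomial growth of $\ddot{g}$ (controlled via the moment assumption on $L'$), yielding $\mathbb{E}[\sup_{t\leq T}\lvert R_{t}^{n}\rvert]\leq C\sqrt{n\Delta_{n}}\cdot T\cdot(n\Delta_{n})^{-1}=\mathrm{O}((n\Delta_{n})^{-1/2})\to0$.

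\textbf{CLT for the main term.} Substituting the explicit form \eqref{eq:def_ahat_modified} of $\hat{a}_{l}$, the statistic $M_{t}^{n}$ becomes a bilinear form in the observations $X_{(k-l)\Delta_{n}}$ and the increments $\delta_{k}X$, weighted by the envelope $\dot{g}(a_{l})$. Finite-dimensional distributions of $M^{n}$ follow from a joint CLT for the vector $(\sqrt{n\Delta_{n}}(\hat{a}(u)-a(u)))_{u\in F}$, $F\subset[0,T]$ finite, whose bivariate asymptotic covariance at $(u,r)$ is precisely $\Sigma_{a}(u,r)$ (reducing to $\sigma_{a}^{2}(t)$ of \cite{SauriVeraart23} on the diagonal). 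This joint CLT can be obtained by the same Lévy-basis truncation plus $m$-dependent-block argument used in \cite{SauriVeraart23}. Convergence of the Riemann sum $\Delta_{n}\sum_{l}\dot{g}(a_{l})\cdot\sqrt{n\Delta_{n}}(\hat{a}_{l}-a_{l})$ to $\int_{0}^{t}\dot{g}(a(u))\mathcal{G}(u)\mathrm{d}u$, where $\mathcal{G}$ is the centred Gaussian process with covariance $\Sigma_{a}$, then yields the finite-dimensional limits of the theorem via a Cramér--Wold device.

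\textbf{Main obstacle: tightness.} The technically demanding step is tightness of $M^{n}$ in $\mathcal{D}([0,T])$. I would establish it via a Billingsley-type moment estimate of the form $\mathbb{E}[(M_{t}^{n}-M_{s}^{n})^{2}(M_{u}^{n}-M_{t}^{n})^{2}]\leq C(u-s)^{2}$ uniformly in $n$, for $s\leq t\leq u$. Expanding $M_{u}^{n}-M_{s}^{n}$ through the definition of $\hat{a}$, this reduces to controlling expectations of four-fold products of integrals of $L$ against indicators of trawl sets $A_{u}\cap A_{v}\cap A_{w}\cap A_{z}$, whose combinatorial structure is governed by cumulants of $L'$ up to order eight. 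This is the precise role of the hypothesis $\mathbb{E}(\lvert L'\rvert^{2(q+2)\lor r_{0}})<\infty$ with $r_{0}>4$: the threshold $r_{0}>4$ guarantees the existence of the joint fourth-moment cumulant bounds. Once tightness is in hand, the functional CLT follows by combining it with the finite-dimensional convergence established above.
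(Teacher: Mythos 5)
Your first-order decomposition (Taylor expansion around $a(l\Delta_n)$, negligible Riemann-sum bias, negligible quadratic remainder, main linear term $M^n$) is the same as the paper's, and your treatment of $B^n$ and $R^n$ is essentially the argument used there. The gap is in the central limit theorem for $M^n$ itself. You propose to deduce the finite-dimensional limits of $M^n_t=\sqrt{n\Delta_n}\,\Delta_n\sum_{l}\dot g(a_l)(\hat a_l-a_l)$ from a joint CLT for the vector $(\sqrt{n\Delta_n}(\hat a(u)-a(u)))_{u\in F}$ over \emph{finite} sets $F$, and then "converge the Riemann sum" to $\int_0^t\dot g(a(u))\mathcal G(u)\,\mathrm{d}u$. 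This step does not follow: the number of summands in $M^n_t$ grows like $t/\Delta_n$, so finite-dimensional convergence of the error process gives no control over the aggregated sum; you would need weak convergence of the whole process $u\mapsto\sqrt{n\Delta_n}(\hat a(u)-a(u))$ in a topology making the integration functional continuous, which is not available from \cite{SauriVeraart23} and is not easier than the original problem. This is precisely where the paper does its real work: it rewrites the double sum over the lag index $l$ and the "spatial" index $j$ (coming from the partition $\mathcal P_A^n(i,j)$ of the trawl set) as a martingale difference array $\zeta_j^n=\Delta_n\sum_{l\le j}\bar c_l^n\varXi_{j,l,n}$ in $j$ with respect to the filtration $\mathscr G_j^n$, applies a martingale CLT, and spends the entire Appendix identifying the limit of the conditional variance as $\int\!\!\int\dot g(a(u))\Sigma_a(u,r)\dot g(a(r))\,\mathrm{d}u\,\mathrm{d}r$. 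Without some equivalent of this reorganization your finite-dimensional convergence is unproved.

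Your tightness step also does not work under the stated hypotheses. Each increment $\hat a_l-a_l$ is (up to negligible terms) a quadratic functional of $L$, so the Billingsley bound $\mathbb E[(M^n_t-M^n_s)^2(M^n_u-M^n_t)^2]\le C(u-s)^2$ involves eight-fold products of $L$-evaluations and would require moments of $L'$ of order roughly $8$; the theorem only assumes $\mathbb E(\lvert L'\rvert^{2(q+2)\lor r_0})<\infty$ with $r_0>4$, which for $q=0$ is barely more than four moments. The paper instead uses the second-moment estimate $\mathbb E(\lvert V^n_t-V^n_s\rvert^2)^{1/2}\le C\Delta_n([t/\Delta_n]-[s/\Delta_n])$ (from Lemma \ref{moment_estimates_ahat}) together with Theorem 13.5 and (13.14) of \cite{Billingsley99} and the continuity of the limit $Z$, which needs nothing beyond fourth moments of $L'$. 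Relatedly, you misidentify the role of $r_0>4$: it is not there for tightness but for the Lyapunov condition in the martingale CLT, where one needs $\mathbb E(\lvert\zeta_j^n\rvert^r)$ for some $r>2$, hence $\mathbb E(\lvert L'\rvert^{2r})<\infty$ for some $2r>4$.
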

		The second order asymptotics for the functionals $\Lambda^{n}$ and $\bar{\Lambda}^{n}$
		will be much alike to those presented in the previous theorem. However, we need to impose certain restrictions on the window $N_n$ to control
		the discretization error associated with $\bar{\Lambda}^{n}$, as well as the memory of $X$. Specifically, we assume that:
		
		\begin{assumption}\label{as:samplescheme} There are $1<\varpi<3,\,\theta>0$,
			and $0<\kappa<1$, such that $n\Delta_{n}^{\varpi}$ is bounded from
			above and below, and $N_{n}\sim\theta n^{\kappa}$, as $n\rightarrow\infty$.
		\end{assumption} 
		The CLT for the functionals $\Lambda^{n}$ and $\bar{\Lambda}^{n}$
		reads as follows.
		
		\begin{theorem}\label{CLT_Lambdan_hatLambdan} Let Assumption
			\ref{as:trawl} hold. Suppose that for some $r_0>4$, $\mathbb{E}(\rvert L^{\prime}\rvert^{2((q\lor p)+2)\lor r_0})<\infty$, and that  $g\in\mathfrak{C}_{p,q}^{2}$.
			Then the following holds:
			\begin{enumerate}
				\item If $p,q>1$, then as $n\rightarrow\infty$
				\begin{equation}
					\sqrt{n\Delta_{n}}(\Lambda^{n}(g)-\Lambda(g))\overset{\mathcal{D}([0,T])}{\Longrightarrow}Z,\label{eq:Lambda_CLT}
				\end{equation}
				where $(Z_{t})_{t\geq0}$ is a centred Gaussian process with 
				\[
				\mathbb{E}(Z_{t}Z_{s})=\int_{t}^{\infty}\int_{s}^{\infty}g^{\prime}(a(u))\Sigma_{a}(u,r)g^{\prime}(a(r))\mathrm{d}r\mathrm{d}u,
				\]
				with $\Sigma_{a}$ as in (\ref{eq:AVAR_ALL}). 
				\item If in addition Assumption \ref{as:samplescheme} is satisfied with
				\begin{equation}
					0<\frac{1}{\varpi}+\frac{\varpi-1}{2\varpi((2+p)\alpha-1)}<\kappa<\frac{1}{2}(1+\frac{1}{\varpi}),\label{eq:restriction_windows_thm}
				\end{equation}
				and $p,q\geq0$ arbitrary, then \eqref{eq:Lambda_CLT} also holds if we replace $\Lambda^{n}$ by $\bar{\Lambda}^{n}$.
			\end{enumerate}
		\end{theorem}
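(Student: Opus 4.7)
Both statements are proved by Taylor expanding $g$ at $a(l\Delta_n)$,
\[
g(\hat a(l\Delta_n))=g(a(l\Delta_n))+\dot g(a(l\Delta_n))\bigl(\hat a(l\Delta_n)-a(l\Delta_n)\bigr)+\tfrac{1}{2}\ddot g(\xi_l)\bigl(\hat a(l\Delta_n)-a(l\Delta_n)\bigr)^{2},
\]
and decomposing the resulting error into a Riemann--tail deterministic remainder, a linear stochastic term, and a quadratic Taylor remainder. A useful book-keeping identity is $\Lambda_{t}^{n}(g)+\Psi_{t}^{n}(g)=\Lambda_{0}^{n}(g)$ together with $\Lambda_{t}(g)+\Psi_{t}(g)=\Lambda_{0}(g)$, which yields
\[
\sqrt{n\Delta_n}(\Lambda_{t}^{n}(g)-\Lambda_{t}(g))=\sqrt{n\Delta_n}(\Lambda_{0}^{n}(g)-\Lambda_{0}(g))-\sqrt{n\Delta_n}(\Psi_{t}^{n}(g)-\Psi_{t}(g)).
\]
Since $g\in\mathfrak{C}_{p,q}^{2}$ has $\ddot g$ of polynomial growth of order $q$, Theorem \ref{CLT_Phin} already supplies the FCLT for the second summand, and it suffices to prove a scalar CLT for $\Lambda_{0}^{n}(g)$ jointly with it.

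\textbf{Part (1).} Applied to $\Lambda_{0}^{n}(g)-\Lambda_{0}(g)$, the Riemann step error is $\mathrm{O}(\Delta_n)$ by the finite total variation of $g\circ a$ on $[0,\infty)$ (which holds because $g(a(s))=\mathrm{O}(s^{-\alpha(p+2)})$ by Remark \ref{integrability_C2_pq} and $a(s)=\mathrm{O}(s^{-\alpha})$), and the missing tail $\int_{(n-1)\Delta_n}^{\infty}g(a(s))\mathrm{d}s$ is $\mathrm{O}((n\Delta_n)^{1-\alpha(p+2)})$; under $n\Delta_{n}^{3}\to 0$ together with $p>1$ and $\alpha>1$, both vanish after $\sqrt{n\Delta_n}$-scaling. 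The quadratic remainder is bounded in $L^{1}$ by $\Delta_n\sum_{l}|\ddot g(\xi_l)|\cdot\mathbb{E}[(\hat a-a)^{2}]$; using $|\ddot g(\xi_l)|=\mathrm{O}(a(l\Delta_n)^{p})$ (up to an $\mathrm{o}_{\mathbb{P}}(1)$ random correction controlled by the moment assumption on $L'$), the uniform bound $\mathbb{E}[(\hat a(l\Delta_n)-a(l\Delta_n))^{2}]=\mathrm{O}(1/(n\Delta_n))$ stemming from Assumption \ref{as:trawl}, and $p\alpha>1$ (so $\int_{0}^{\infty}a(s)^{p}\mathrm{d}s<\infty$), one gets $\mathrm{O}((n\Delta_n)^{-1})=\mathrm{o}(1/\sqrt{n\Delta_n})$. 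The linear term is then handled by extending the CLT machinery behind Theorem \ref{CLT_Phin} from a bounded to the full index range $0\le l\le n-1$; the decay $\dot g(a(l\Delta_n))=\mathrm{O}(l^{-\alpha(p+1)})$ makes the far-tail contribution $L^{2}$-negligible and identifies the variance as $\int_{0}^{\infty}\int_{0}^{\infty}\dot g(a(u))\Sigma_{a}(u,r)\dot g(a(r))\mathrm{d}r\mathrm{d}u$. Joint convergence with $\Psi^{n}$ is automatic since both statistics are the same linear functional applied to different deterministic weights, so the continuous-mapping theorem delivers a centered Gaussian process with covariance $\int_{t}^{\infty}\int_{s}^{\infty}\dot g(a(u))\Sigma_{a}(u,r)\dot g(a(r))\mathrm{d}r\mathrm{d}u$ after the cancellation $\int_{0}^{\infty}\int_{0}^{\infty}-\int_{0}^{t}\int_{0}^{\infty}-\int_{0}^{\infty}\int_{0}^{s}+\int_{0}^{t}\int_{0}^{s}=\int_{t}^{\infty}\int_{s}^{\infty}$.

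\textbf{Part (2).} For $\bar\Lambda^{n}$ the same decomposition applies with $l$ truncated at $N_n-1$. Two ingredients change. First, the truncation bias $\int_{N_n\Delta_n}^{\infty}g(a(s))\mathrm{d}s=\mathrm{O}((N_n\Delta_n)^{1-\alpha(p+2)})$; using $N_n\Delta_n\asymp n^{\kappa-1/\varpi}$ and $n\Delta_n\asymp n^{1-1/\varpi}$ from Assumption \ref{as:samplescheme}, requiring $\sqrt{n\Delta_n}\cdot(N_n\Delta_n)^{1-\alpha(p+2)}\to 0$ gives exactly the lower bound $\kappa>1/\varpi+(\varpi-1)/(2\varpi(\alpha(p+2)-1))$. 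Second, without the assumption $p>1$ the quadratic Taylor remainder is no longer summable and is only of order $N_n\cdot(n\Delta_n)^{-1}\cdot\Delta_n=N_n/n$; after $\sqrt{n\Delta_n}$-scaling this reads $n^{\kappa-(1+1/\varpi)/2}$, which vanishes precisely under $\kappa<(1+1/\varpi)/2$. Both conditions together are exactly \eqref{eq:restriction_windows_thm}. The linear-term CLT transfers verbatim from part (1) because $N_n\Delta_n\to\infty$ is enough for the truncated sum to reproduce the same infinite double-integral covariance in the limit.

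\textbf{Main obstacle.} The delicate step is the CLT for the linear term over the extended summation range. I expect to adapt the blocking/martingale argument of Theorem \ref{CLT_Phin} by truncating the sum at a slowly growing threshold $K_n$, controlling the fourth-cumulant structure of $\hat a-a$ uniformly across persistence regimes, and establishing joint tightness in $\mathcal{D}([0,T])$ with $\Psi^{n}$; once this is in place the remaining work is purely the tail, Riemann, and Taylor-remainder bookkeeping sketched above.
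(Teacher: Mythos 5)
Your overall architecture mirrors the paper's: Taylor-expand $g$ at $a(l\Delta_n)$, split off a deterministic Riemann/tail remainder, a quadratic Taylor remainder, and a linear leading term, and then run a martingale CLT on the linear term. Your exponent bookkeeping is also correct and matches the paper exactly: the tail bias $\sqrt{n\Delta_n}(N_n\Delta_n)^{1-\alpha(p+2)}\to 0$ reproduces the lower bound on $\kappa$ in \eqref{eq:restriction_windows_thm}, and $\sqrt{n\Delta_n}\,N_n/n=n^{\kappa-(1+1/\varpi)/2}\to 0$ reproduces the upper bound, which is precisely how Lemma \ref{error1_CLT} disposes of $U^{n,1}$ and $U^{n,5}$. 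The reduction $\Lambda^n_t=\Lambda^n_0-\Psi^n_t$ is a legitimate repackaging of the paper's device of writing all three statistics as $\Delta_n\sum_l c^n_l(t)g(\hat a(l\Delta_n))$ for different weights $c^n_l(t)$ and running Cram\'er--Wold on arbitrary linear combinations; your covariance cancellation $\int_0^\infty\int_0^\infty-\int_0^t\int_0^\infty-\int_0^\infty\int_0^s+\int_0^t\int_0^s=\int_t^\infty\int_s^\infty$ is consistent with the limit stated in the theorem.

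The genuine gap is the item you yourself flag as the ``main obstacle'': the CLT for the linear term $\sqrt{n\Delta_n}\,\Delta_n\sum_{l=0}^{m_n-1}\dot g(a(l\Delta_n))\bigl(\tilde a(l\Delta_n)-\mathbb{E}\tilde a(l\Delta_n)\bigr)$ over the \emph{expanding} index range, including joint convergence with $\Psi^n$ and identification of the covariance as $\int\int \dot g(a(u))\Sigma_a(u,r)\dot g(a(r))$. This is not a routine adaptation of Theorem \ref{CLT_Phin}: it is where essentially all of the paper's technical work lives. Concretely, one must (i) decompose $\tilde a-\mathbb{E}\tilde a$ into the martingale-difference array $\varXi_{j,l,n}$ plus a remainder $R_{j,l,n}$ and show the remainder is negligible when weighted by $\dot g(a(l\Delta_n))$ over all $l\le n-1$ (Lemma \ref{error2_CLT}, which uses that $\Delta_n\sum_l|\dot g(a(l\Delta_n))|$ stays bounded because $|\dot g(a(s))|\lesssim a(s)^{1+p}$ is integrable); (ii) prove that the conditional variance of the array concentrates on its expectation uniformly over the unbounded range (Lemma \ref{lemmaavar_approx}, a long case-by-case second-moment analysis of sixteen cross terms); and (iii) show the expectation converges to $\Sigma_a(s,r)$ via a generalized dominated-convergence argument exploiting the decay of $\dot g(a(\tau_n(\cdot)))$ (Lemma \ref{lemmaavar_limit} and the bound \eqref{eq:boundg_prime_GDCT}). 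None of this is supplied by your sketch, and a ``slowly growing truncation threshold $K_n$'' is not how the paper resolves it --- the uniform moment bound of Lemma \ref{moment_estimates_ahat} together with the integrability of $a^{1+p}$ is what makes the full range tractable. Until step (i)--(iii) is carried out, the proof is a plan rather than an argument; the surrounding remainder estimates, which you do handle correctly, are the easy part.
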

		\begin{remark}\label{Rmk_cont_Z} The following remarks are in order:
			\begin{enumerate}
				
				\item 		Let $Z$ be as in Theorems \ref{CLT_Phin}
				and \ref{CLT_Lambdan_hatLambdan}. From Remark \ref{integrability_C2_pq},
				it follows that for every $0\leq s\leq t$
				\[
				\mathbb{E}(\rvert Z_{t}-Z_{s}\rvert^{2})\leq C(t-s)^{2}.
				\]
				Therefore, by Kolmogorov's criterion, we deduce that $Z$ has $\lambda$-H\"older
				continuous paths for every $\lambda<1.$
				\item We are able to show that in the limiting case $p\wedge q=1$ -- recall that when $p\wedge q<1$, in general, $\Lambda^{n}(g)$ is not consistent --  the sequence	$\sqrt{n\Delta_{n}}(\Lambda^{n}(g)-\Lambda(g))$ is tight on $\mathcal{D}([0,T])$  but we were not able to identify its limit. However, we believe that, just as in the case $g(x)=x^2$, the limit process is Gaussian but no longer centred.
				\item 	The lower bound in  \eqref{eq:restriction_windows_thm}, which depends on the memory of $X$, prevents the tail $\int_{N_n\Delta_{n}}^{\infty}g(a(s))\mathrm{d}s$ from dominating the asymptotics. Thus, unlike Theorem \ref{CLT_Phin} and the first part of Theorem \ref{CLT_Lambdan_hatLambdan}, the CLT for $\bar{\Lambda}^{n}(g)$ is no longer robust to long memory.
			\end{enumerate}
			
		\end{remark}
		\subsection{A test for $T$-dependent trawls}
		In this part we briefly illustrate how our results can be used to test for $T$-dependence in a non-parametric framework. We recall that a stochastic process $(Y_t)_{t\in\mathbb{R}}$ is said to be \textit{$T$-dependent}, with $T>0$, if for a given $t_0\in\mathbb{R}$, the processes $(Y_t)_{t\leq t_0}$ and $(Y_t)_{t\geq T+ t_0}$ are independent (in the sense of finite-dimensional distributions). We have the following characterization of $T$-dependence for trawl processes.
		\begin{proposition}\label{T_depe} Let $X$ be a trawl process. Then, unless $L$ is deterministic, $X$ is $T$-dependent if, and only if, for all $t\geq T$, $\mathrm{Leb}(A_t\cap A)=0$.\end{proposition}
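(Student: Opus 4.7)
The plan is to first recast the geometric condition as a statement about the support of $a$, and then treat the two directions using the L\'evy--Khintchine representation and the independent scattering of $L$.

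A direct computation from \eqref{eq:trawlset} gives, for $t\geq0$,
\[
\mathrm{Leb}(A_{t}\cap A)=\int_{-\infty}^{0}a(t-r)\,\mathrm{d}r=\int_{t}^{\infty}a(u)\,\mathrm{d}u,
\]
using that $a$ is non-increasing so the minimum of the two heights is $a(t-r)$ on $r\leq0$. Since $a$ is non-increasing and continuous, the family of conditions $\mathrm{Leb}(A_{t}\cap A)=0$ for all $t\geq T$ is equivalent to the single condition $a\equiv 0$ on $[T,\infty)$. This reformulation is the working form of the geometric hypothesis.

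For necessity, assume $X$ is $T$-dependent, so in particular (take $t_0=0$) $X_{0}$ and $X_{T}$ are independent. Independent scattering of $L$ makes
\[
U:=L(A\cap A_{T}),\qquad V:=L(A\setminus A_{T}),\qquad W:=L(A_{T}\setminus A)
\]
mutually independent, with $X_{0}=U+V$ and $X_{T}=U+W$. Comparing the joint characteristic function of $(X_{0},X_{T})$ with the factorization forced by $X_{0}\perp X_{T}$ yields
\[
\mathbb{E}[e^{\mathbf{i}(z_{1}+z_{2})U}]=\mathbb{E}[e^{\mathbf{i}z_{1}U}]\,\mathbb{E}[e^{\mathbf{i}z_{2}U}],\qquad z_{1},z_{2}\in\mathbb{R}.
\]
By \eqref{Levy-Kintchine_form}, this is $\mathrm{Leb}(A\cap A_{T})\,[\psi(z_{1}+z_{2})-\psi(z_{1})-\psi(z_{2})]=0$ for every $z_{1},z_{2}$. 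If $L$ is non-deterministic, then $b>0$ or $\nu\neq 0$ in \eqref{chexpdef}, so $\psi$ is not additive in $z$; hence $\mathrm{Leb}(A\cap A_{T})=0$, which by the reformulation above gives the direction.

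For sufficiency, suppose $a\equiv 0$ on $[T,\infty)$. Then, modulo a Lebesgue-null set, $A_{s}\subseteq[s-T,s]\times\mathbb{R}^{+}$ for every $s\in\mathbb{R}$. Fix $t_{0}\in\mathbb{R}$ and put $H_{-}:=(-\infty,t_{0}]\times\mathbb{R}^{+}$, $H_{+}:=[t_{0},\infty)\times\mathbb{R}^{+}$; then $A_{s}\subseteq H_{-}$ for $s\leq t_{0}$ and $A_{s}\subseteq H_{+}$ for $s\geq t_{0}+T$. Since $\mathrm{Leb}(H_{-}\cap H_{+})=0$, the independent scattering of $L$ renders the $\sigma$-algebras generated by $\{L(B):B\in\mathcal{B}_{b}(\mathbb{R}^{2}),\,B\subseteq H_{-}\}$ and $\{L(B):B\in\mathcal{B}_{b}(\mathbb{R}^{2}),\,B\subseteq H_{+}\}$ independent; hence so are the finite-dimensional distributions of $(X_{s})_{s\leq t_{0}}$ and $(X_{s})_{s\geq t_{0}+T}$. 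The delicate step is the characteristic-function argument in the necessity part: one has to exploit that $L'$ being non-deterministic precisely corresponds to $\psi$ being strictly non-affine in $z$, ruling out every alternative except $\mathrm{Leb}(A\cap A_{T})=0$; the rest is a geometric observation about the support of $A_{s}$ together with independent scattering.
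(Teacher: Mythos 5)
Your proof is correct, and it diverges from the paper's in an instructive way. The necessity direction is essentially the paper's argument: you both reduce to the independence of $X_0$ and $X_T$, compute the joint characteristic function via independent scattering, and conclude that either $\mathrm{Leb}(A\cap A_T)=0$ or $\psi$ is additive, the latter being excluded for non-deterministic $L$ (the paper makes this explicit via Cauchy's functional equation; you assert the contrapositive, which deserves the one-line justification that a continuous additive $\psi$ must be $\mathbf{i}\gamma z$ since $\lvert e^{\psi}\rvert\leq1$). Your explicit decomposition $X_0=U+V$, $X_T=U+W$ with cancellation of the non-vanishing ID characteristic functions is a slightly more hands-on route to the same identity, and your observation that the single condition $\mathrm{Leb}(A_T\cap A)=\int_T^{\infty}a(u)\,\mathrm{d}u=0$ already forces $a\equiv0$ on $[T,\infty)$ by monotonicity is a nice economy. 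The genuine difference is in the sufficiency direction: the paper invokes the equivalence, valid for infinitely divisible processes (Exercise E 12.9 in \cite{Sato99}), between pairwise independence of $X_s$ and $X_u$ and full independence of the past and future families, so that the vanishing of $\mathrm{Leb}(A_t\cap A)$ trivially verifies the pairwise condition. You instead prove the full finite-dimensional independence directly: when $a$ vanishes on $[T,\infty)$, each $A_s$ sits (modulo a null set) in the strip $[s-T,s]\times\mathbb{R}^{+}$, so the trawl sets indexed by $s\leq t_0$ and by $s\geq t_0+T$ occupy essentially disjoint half-planes and independent scattering does the rest. This buys you a self-contained, more elementary sufficiency proof that dispenses with the Sato citation, at the cost of a standard but unstated $\pi$-$\lambda$ step upgrading independence on disjoint sets to independence of the generated $\sigma$-algebras; the paper's route is shorter on the page but leans on a nontrivial external result about ID processes.
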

		\begin{proof} Since $X$ is an infinitely divisible process, $T$-dependence is equivalent to the condition that, for a given $t_0$, $X_s$ and $X_u$ are independent for all $s\leq t_0$ and $u\geq t_0+T$ (see, for instance, Exercise E 12.9 in \cite{Sato99}). Furthermore, by stationarity, the former condition is equivalent to the independence of $X_t$ and $X_0$ for all $t\geq T$.  Now note that, in view of  \eqref{Levy-Kintchine_form} and the independently scattered property of $L$, the characteristic exponent of the vector $(X_0,X_t)$ is equal to
			\[ [\psi(z_1)+\psi(z_2)]\mathrm{Leb}(A)+\mathrm{Leb}(A_t\cap A)[\psi(z_1+z_2)-\psi(z_1)-\psi(z_2)], \,\, \forall \,(z_1,z_2)\in\mathbb{R}^2.\]
			Consequently, $X_t$ and $X_0$ are independent  if, and only if, 
			\begin{equation}\label{independence_cond}
				\mathrm{Leb}(A_t\cap A)[\psi(z_1+z_2)-\psi(z_1)-\psi(z_2)]=0, \,\, \forall \,(z_1,z_2)\in\mathbb{R}^2.
			\end{equation}
			Thus, if $\mathrm{Leb}(A_t\cap A)=0$, for all $t\geq T$, the previous relation is trivially fulfilled. Reciprocally, if \eqref{independence_cond} is satisfied for all $t\geq T$, then either $\mathrm{Leb}(A_t\cap A)=0$ or $\psi(z_1+z_2)=\psi(z_1)+\psi(z_2)$ for all $ (z_1,z_2)\in\mathbb{R}^2$. If the latter holds, we would have that $\psi$ is continuous and additive, so linear (c.f. Cauchy's functional equation) which is the case only when $L$ is deterministic.
		\end{proof}
		The preceding result shows that, under our set-up,  $X$ is $T$-dependent if, and only if, $\int_{T}^{\infty}a(s)\mathrm{d}s=0$. Note that this condition is in turn equivalent to $\mathrm{Cov}(X_T,X_0)=0$ and thus one can use the sample autocovariance function to test $T$-dependence of $X$. However, it is well known that such a statistic is heavily dependent on the memory of the process. See for instance \cite{CohLind13,Drapatz17,HorvKoko08,Spangenberg21}. To avoid this problem, we propose to use another characterization of $T$-dependence for trawl processes, namely 
		\begin{equation}\label{H0_T_dep}
			\int_{T}^{\infty}\rvert a(s)\lvert^p\mathrm{d}s=0, \text{ for some (and hence all)  } p>0,
		\end{equation}
		which follows from the fact that $a$ is non-negative and non-increasing. Therefore, under the null hypothesis of $T$-dependence, we deduce, since $\mathrm{Leb}(A)>0$, that
		\[ \frac{\Lambda_{0}(g)}{\Psi_{T}(g)}=\frac{\int_{0}^{\infty}\rvert a(s)\lvert^p\mathrm{d}s}{\int_{0}^{T}\rvert a(s)\lvert^p\mathrm{d}s}=1, \]
		where $g(x)=\lvert x\rvert^p$. Based on the previous discussion, we propose to use the  statistic 
		\[\tau_{n,p}=\frac{\Lambda^n_{0}(g)}{\Psi_{T}^n(g)}-1=\frac{\Delta_{n}\sum_{l=0}^{n-1}\rvert\hat{a}(l\Delta_{n})\rvert^{p}}{\Delta_{n}\sum_{l=0}^{[T/\Delta_{n}]-1}\rvert\hat{a}(l\Delta_{n})\rvert^{p}}-1,\,\,  p>3, \]
		to test the hypothesis \eqref{H0_T_dep}. The asymptotic behaviour of $\tau_{n,p}$ is described in the following result.
		
		\begin{corollary}
			Let Assumption \ref{as:trawl} hold and assume that  $\mathbb{E}(\rvert L^{\prime}\rvert^{2p})<\infty$ for some $p>3$. Then, 
			\begin{equation}\label{asymp_test}
				\sqrt{n\Delta_{n}}\tau_{n,p}\overset{\mathbb{P}}{\to}\begin{cases}
					0 & \text{if } \eqref{H0_T_dep} \text{ holds} ;\\
					+\infty & \text{otherwise.}
				\end{cases} 
			\end{equation}
		\end{corollary}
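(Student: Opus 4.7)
The plan is to decompose $\tau_{n,p}$ so that its asymptotics reduce to those of the tail functional $\Lambda_T^n(g)$ and the head functional $\Psi_T^n(g)$, for which limit theorems are already available. Start from the elementary identity
\[\Lambda_0^n(g) = \Psi_T^n(g) + \Lambda_T^n(g),\]
which rewrites the statistic as $\tau_{n,p} = \Lambda_T^n(g)/\Psi_T^n(g)$. For $g(x) = |x|^p$ with $p > 3$ one checks that $g \in \mathfrak{C}^2_{p-2,p-2}$ with both indices strictly larger than $1$, that $g$ and $g^{(2)}$ are of polynomial growth of orders $p$ and $p-2$, and that the moment hypothesis $\mathbb{E}(|L'|^{2p}) < \infty$ covers all the integrability requirements of Theorems \ref{consitency_Phin}, \ref{consitency_Lambdan}, \ref{CLT_Phin} and part~(1) of Theorem \ref{CLT_Lambdan_hatLambdan} (taking $r_0 \in (4, 2p]$). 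Since $\mathrm{Leb}(A)>0$ and $a$ is non-increasing and continuous, $\Psi_T(g) = \int_0^T a(s)^p\,\mathrm{d}s > 0$, and Theorem \ref{consitency_Phin} gives $\Psi_T^n(g) \overset{\mathbb{P}}{\to} \Psi_T(g) > 0$. By Slutsky's theorem it therefore suffices to study $\sqrt{n\Delta_n}\,\Lambda_T^n(g)$ in each of the two regimes.

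Under the null hypothesis \eqref{H0_T_dep}, Proposition \ref{T_depe} together with monotonicity of $a$ forces $a(s) = 0$ for every $s \geq T$, whence $\Lambda_T(g) = 0$ and, using $g(0) = \dot g(0) = 0$ (valid since $p > 1$), also $\dot g(a(s)) = 0$ for every $s \geq T$. Part~(1) of Theorem \ref{CLT_Lambdan_hatLambdan}, applied on any $\mathcal{D}([0, T'])$ with $T' > T$ and read off at time $T$, then yields that $\sqrt{n\Delta_n}(\Lambda_T^n(g) - \Lambda_T(g))$ converges in distribution to a centered Gaussian random variable whose variance equals
\[\int_T^\infty \int_T^\infty \dot{g}(a(u))\,\Sigma_a(u,r)\,\dot{g}(a(r))\,\mathrm{d}r\,\mathrm{d}u = 0.\]
Hence the limit is the constant $0$, which is equivalent to $\sqrt{n\Delta_n}\,\Lambda_T^n(g) \overset{\mathbb{P}}{\to} 0$, and Slutsky gives $\sqrt{n\Delta_n}\,\tau_{n,p} \overset{\mathbb{P}}{\to} 0$.

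If \eqref{H0_T_dep} fails, non-negativity and monotonicity of $a$ supply some $s_0 > T$ with $a(s_0) > 0$, and therefore $\Lambda_T(g) \geq (s_0 - T)\,a(s_0)^p > 0$. Theorem \ref{consitency_Lambdan} yields $\Lambda_T^n(g) \overset{\mathbb{P}}{\to} \Lambda_T(g) > 0$, so $\sqrt{n\Delta_n}\,\Lambda_T^n(g) \overset{\mathbb{P}}{\to} +\infty$; dividing by the positive limit $\Psi_T(g)$ of $\Psi_T^n(g)$ preserves the divergence. The only slightly non-routine step is the degeneracy of the limit variance under the null, which upgrades the CLT for $\Lambda_T^n(g)$ into an $\mathrm{o}_{\mathbb{P}}((n\Delta_n)^{-1/2})$ statement; everything else is bookkeeping with Slutsky's theorem and the consistency and CLT results already established.
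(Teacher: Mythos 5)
Your proposal is correct and follows essentially the same route as the paper: rewrite $\tau_{n,p}=\Lambda_T^n(g)/\Psi_T^n(g)$ with $g(x)=\lvert x\rvert^p$, invoke Theorem \ref{CLT_Lambdan_hatLambdan}(1) under the null to get a degenerate (zero-variance) Gaussian limit for $\sqrt{n\Delta_n}\,\Lambda_T^n(g)$, and invoke the consistency results under the alternative. The only difference is that you spell out the verification that $g\in\mathfrak{C}^2_{p-2,p-2}$ and the moment bookkeeping, which the paper leaves implicit.
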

		\begin{proof}
			Plainly, $\tau_{n,p}=\Lambda^n_{T}(g)/\Psi_{T}^n(g)$, with $g(x)=\lvert x\rvert^p$. Thus, if \eqref{H0_T_dep} is satisfied, Theorem \ref{CLT_Lambdan_hatLambdan} implies that $\sqrt{n\Delta_{n}}\Lambda^n_{T}(g)\overset{d}{\to}\zeta$, where $\zeta$ is a centred Gaussian random variable with variance
			\[ p^{2}\int_{T}^{\infty}\int_{T}^{\infty}\rvert a(u)\rvert^{p-1}\Sigma_{a}(u,r)\rvert a(r)\rvert^{p-1}\mathrm{d}u\mathrm{d}r=0,\]
			thanks to \eqref{H0_T_dep}. This shows the first part of \eqref{asymp_test}. In contrast, if \eqref{H0_T_dep} does not hold then necessarily $\int_{0}^{T}\rvert a(s)\lvert^p\mathrm{d}s>0$. Theorem \ref{consitency_Lambdan} now gives $\tau_{n,p}\overset{\mathbb{P}}{\to}\frac{\int_{T}^{\infty}\rvert a(s)\lvert^p\mathrm{d}s}{\int_{0}^{T}\rvert a(s)\lvert^p\mathrm{d}s}>0$, from which the second part of  \eqref{asymp_test} follows.
		\end{proof}
		The previous result shows that, unlike the sample autocovariance function, the limiting behaviour of $\tau_{n,p}$ under the alternative of no $T$-dependence is independent of the memory of the process. Consequently, large values of $\sqrt{n\Delta_n}\tau_{n,p}$ indicate that the data is unlikely to be $T$-dependent.
		\begin{remark} As pointed out in Remark \ref{Rmk_cont_Z}, our proposed statistic for testing \eqref{H0_T_dep} is robust to the memory of $X$. Moreover, in this setting $\tau_{n,p}$ has a smaller estimation error. However, it still suffers from some weaknesses. First, to rigorously use $\tau_{n,p}$ for testing $T$-dependence, a CLT under the null hypothesis \eqref{H0_T_dep} is required. Second, it remains to be seen how sensitive $\tau_{n,p}$ is in finite samples for different values of $p$. These issues require additional analysis, which we postpone to future research.
		\end{remark} 
		
		\section{Proofs}
		The proofs are organised in three parts. We first derive a collection of preliminary estimates and decompositions for the estimator $\hat a$, which provide the basic tools used throughout the section. We then use these ingredients to prove the consistency results and to analyse the behaviour of $\Lambda_{t}^{n}(g)$ in the quadratic case. Finally, for our central limit theorems, we use the decompositions obtained in the first part to isolate the leading term, show that all remainder terms are asymptotically negligible, and reduce the problem to a martingale central limit theorem. The technical computations required to approximate the asymptotic variance are deferred to the appendix. As mentioned in the introduction, the appendix may be skipped on a first reading.
		
		Throughout this section, non-random positive constants are denoted by the generic symbol $C>0$, which may change from line to line. We write $a\lesssim_{\theta}b$ whenever $a\leq C_{\theta}b$
		where $C_{\theta}$ depends only on the parameter
		$\theta$. For $i,j=0,1,\ldots$ with $j\geq i$, define 
		\begin{equation}
			\mathcal{P}_{A}^{n}(i,j):=\{(r,s):a(t_{j+1}-s)<r\leq a(t_{j}-s),t_{i-1}<s\leq t_{i}\},\label{Pijsetdef}
		\end{equation}
		where $t_{i}=i\Delta_{n}$ with the convention that $t_{-1}=-\infty$.
		Note that for all $k=0,1,\ldots,$
		\begin{equation}
			\mathrm{Leb}(A_{t_{k}}\backslash\bigcup_{i=0}^{k}\bigcup_{j\geq k}\mathcal{P}_{A}^{n}(i,j))=0.\label{eq:reprA}
		\end{equation}
		Using these sets, we introduce the filtrations: 
		\begin{equation}
			\mathscr{F}_{k}^{n}:=\sigma\left(\{L(\mathcal{P}_{A}^{n}(m,j))\}_{j\geq m}:m=0,\ldots,k\right),\,\,\,\,k\geq0,\label{filtrationovertime}
		\end{equation}
		and 
		\begin{equation}
			\mathscr{G}_{j}^{n}:=\sigma\left(\{L(\mathcal{P}_{A}^{n}(i,m))\}_{0\leq i\leq m}:m=0,\ldots,j\right),\,\,\,j\geq0.\label{filtrationoverspace}
		\end{equation}
		In the remainder of this section we set $\mathfrak{t}_{L}=(\gamma,\sigma,\nu)$,
		the characteristic triplet of $L$, and use the notation $\tau_{n}(s)=[s/\Delta_{n}]\Delta_{n}$
		as well as $u_{n}(s)=\left\lceil s/\Delta_{n}\right\rceil \Delta_{n}$,
		for $s\geq0.$

		\subsection{Preliminary estimates and decompositions}
		
		In this part we derive some estimates and establish some fundamental
		decompositions that will be used in a later stage. We also introduce a number of auxiliary random variables and processes.
		
		The following estimate for the moments of a homogeneous L\'evy basis
		will play a key role in our analysis. For a proof see \cite{LuschPages08}  and \cite{Turner11} (Corollary 1.2.7).
		
		\begin{lemma}\label{moment_estimates_ahat-1}Let $L$ be a centred
			homogeneous L\'evy basis with characteristic triplet $\mathfrak{t}_{L}=(\gamma,\sigma,\nu)$
			with $\mathbb{E}(\rvert L^{\prime}\rvert^{r})<\infty$ for $r\geq1$.
			Then
			\begin{equation}
				\mathbb{E}(\rvert L(B)\rvert^{r})\leq C_{r,\mathfrak{t}_{L}}\mathrm{Leb}(B)^{r/2}\lor\mathrm{Leb}(B),\,\,\forall\,\ensuremath{B\in\mathcal{B}(\mathbb{R}^{2}),}\label{momentineq}
			\end{equation}
			where $C_{q,\mathfrak{t}_{L}}>0$ only depends on $q$ and $\mathfrak{t}_{L}$.
			Furthermore
			\begin{align*}
				\mathbb{E}(L(A)^{2}) & =\mathrm{Leb}(A)(\sigma^{2}+\int x^{2}\nu(dx)).\\
				\mathbb{E}(L(A)^{3}) & =\mathrm{Leb}(A)\int x^{3}\nu(dx).\\
				\mathbb{E}(L(A)^{4}) & =\mathrm{Leb}(A)\left\{ \mathfrak{K}_{4}(L')+3\mathrm{Leb}(A)(\sigma^{2}+\int x^{2}\nu(dx))^{2}\right\} .
			\end{align*}
			Moreover, if $A\subseteq B\subseteq C\subseteq D$, then
			\[
			\mathbb{E}\left[L(A)L(B)L(C)\right]=\mathbb{E}\left[L(A)^{3}\right],
			\]
			and 
			\begin{align*}
				\mathbb{E}[L(A)L(B)L(C)L(D)]= & \mathbb{E}[L(A)^{4}]+\mathbb{E}[L(A)^{2}]\mathbb{E}[L(C\backslash A)^{2}]\\
				& +2\mathbb{E}[L(A)^{2}]\mathbb{E}[L(B\backslash A)^{2}]
			\end{align*}
			
		\end{lemma}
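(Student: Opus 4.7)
The lemma collects three separable claims, so the plan is to treat them one at a time. The unifying observation is that, by (\ref{Levy-Kintchine_form}) and the independent-scatteredness of $L$, for every Borel set $B$ with $\mathrm{Leb}(B)<\infty$ the random variable $L(B)$ has the same distribution as $Y_{\mathrm{Leb}(B)}$, where $(Y_t)_{t\geq 0}$ is a real-valued L\'evy process with characteristic exponent $\psi$ and triplet $\mathfrak{t}_{L}$. Consequently every statement reduces to a one-parameter estimate for $Y$ at time $t=\mathrm{Leb}(B)$, and the algebraic identities reduce to independence computations among the pieces $L(A)$, $L(B\setminus A)$, $L(C\setminus B)$, $L(D\setminus C)$.

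For the general moment bound (\ref{momentineq}), the plan is to split $Y_t=\sigma W_t+M_t+J_t$ via the L\'evy--It\^o decomposition, where $W$ is standard Brownian motion, $M$ is the compensated martingale associated with $\nu\mathbf{1}_{\{\rvert x\rvert\leq 1\}}$, and $J$ is a compound Poisson process absorbing the big jumps and the drift needed to keep $L$ centered. Gaussian moments give $\mathbb{E}\rvert\sigma W_t\rvert^r\lesssim t^{r/2}$. Applying Burkholder--Davis--Gundy to the purely discontinuous martingale $M$ and then Rosenthal's inequality to its quadratic variation (which is a sum of iid squared jump sizes) yields $\mathbb{E}\rvert M_t\rvert^r\lesssim t^{r/2}\lor t$. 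For $J_t$, conditioning on the Poisson count and using $\int_{\rvert x\rvert>1}\rvert x\rvert^r\nu(\mathrm{d}x)<\infty$ (a consequence of $\mathbb{E}\rvert L^{\prime}\rvert^r<\infty$) produces the same scaling. Combining the three pieces by Minkowski gives $\mathrm{Leb}(B)^{r/2}\lor\mathrm{Leb}(B)$. This is precisely the Luschgy--Pag\`es argument, so in the write-up I would only sketch the BDG/Rosenthal estimate and cite the reference; this is the technically most delicate part.

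For the three explicit moment formulas, I would appeal to the fact that the $k$th cumulant of $L(B)$ equals $\mathrm{Leb}(B)\kappa_k(L^{\prime})$, with $\kappa_2(L^{\prime})=\sigma^2+\int x^2\nu(\mathrm{d}x)$, $\kappa_3(L^{\prime})=\int x^3\nu(\mathrm{d}x)$, and $\kappa_4(L^{\prime})=\mathfrak{K}_4(L^{\prime})$, and then use the moment--cumulant relations $\mu_2=\kappa_2$, $\mu_3=\kappa_3$, $\mu_4=\kappa_4+3\kappa_2^2$ (valid because centering identifies central and non-central moments up to order three). For the nested-set identities, the plan is to write $L(B)=L(A)+L(B\setminus A)$, $L(C)=L(B)+L(C\setminus B)$, $L(D)=L(C)+L(D\setminus C)$ as sums of four mutually independent, mean-zero increments and to expand the product, discarding every term in which some increment appears to the first power. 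For the triple product this collapses immediately to $\mathbb{E}L(A)^3$. For the quadruple product, independence of $L(D\setminus C)$ reduces the calculation to $\mathbb{E}L(A)L(B)L(C)^2$; expanding $L(C)^2$ kills the cross term and leaves $\mathbb{E}L(A)L(B)^3+\mathbb{E}L(A)^2\mathbb{E}L(C\setminus B)^2$; a further expansion of $L(B)^3$ yields $\mathbb{E}L(A)^4+3\mathbb{E}L(A)^2\mathbb{E}L(B\setminus A)^2+\mathbb{E}L(A)^2\mathbb{E}L(C\setminus B)^2$, and collapsing $\mathbb{E}L(B\setminus A)^2+\mathbb{E}L(C\setminus B)^2=\mathbb{E}L(C\setminus A)^2$ (by disjointness) delivers the stated formula.
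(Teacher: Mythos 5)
Your proposal is correct, and it matches the route the paper itself takes: the paper does not prove this lemma but cites Luschgy--Pag\`es and Turner for the moment bound \eqref{momentineq}, which is exactly the L\'evy--It\^o/BDG/Rosenthal argument you sketch, while the cumulant identities and the nested-set expansions follow from the independent-scatteredness and centering precisely as you compute them. Your bookkeeping for the quadruple product (arriving at $3\,\mathbb{E}[L(A)^2]\mathbb{E}[L(B\backslash A)^2]+\mathbb{E}[L(A)^2]\mathbb{E}[L(C\backslash B)^2]$ and regrouping via $\mathbb{E}[L(B\backslash A)^2]+\mathbb{E}[L(C\backslash B)^2]=\mathbb{E}[L(C\backslash A)^2]$) reproduces the stated formula exactly.
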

		
		Next, we observe that the estimation error $\hat{a}$ admits the decomposition
		\begin{equation}
			\begin{aligned}\hat{a}(l\Delta_{n})-a(l\Delta_{n}) & =\tilde{a}(l\Delta_{n})-\mathbb{E}(\tilde{a}(l\Delta_{n}))+\mathbb{E}(\tilde{a}(l\Delta_{n}))-a(l\Delta_{n})\\
				& +\frac{1}{n\Delta_{n}}(X_{n\Delta_{n}}-X_{l\Delta_{n}})\frac{1}{n}S_{n}\\
				= & \tilde{a}(l\Delta_{n})-\mathbb{E}(\tilde{a}(l\Delta_{n}))+(X_{n\Delta_{n}}-X_{l\Delta_{n}})\frac{1}{n^{2}\Delta_{n}}S_{n}\\
				& +\frac{(n-l)}{n}\frac{1}{\Delta_{n}}\int_{l\Delta_{n}}^{(l+1)\Delta_{n}}[a(s)-a(l\Delta_{n})]\mathrm{d}s-\frac{l}{n}a(l\Delta_{n}),
			\end{aligned}
			\label{eq:decompahat_atilde_plusbias}
		\end{equation}
		where, $S_{n}=\sum_{k=0}^{n-1}\tilde{X}_{k\Delta_{n}}$ and
		\begin{equation}
			\tilde{a}(l\Delta_{n}):=-\frac{1}{n\Delta_{n}}\sum_{k=l}^{n-1}\tilde{X}_{(k-l)\Delta_{n}}\delta_{k}\tilde{X},\,\,\,l=0,\ldots,n-1,\label{eq:def_a_tilde}
		\end{equation}
		in which $\tilde{X}_{t}=X_{t}-\mathbb{E}(X_{t})$ and $\delta_{k}\tilde{X}=\tilde{X}_{(k+1)\Delta_{n}}-\tilde{X}_{k\Delta_{n}}$.
		To analyse this error, we introduce several auxiliary random variables based on the L\'evy basis $L$. Let $\tilde{L}(\cdot)=L(\cdot)-\mathbb{E}(L(\cdot)$, define 
		\begin{align}
			\chi_{k}:=\tilde{L}(\cup_{i=0}^{k}\mathcal{P}_{A}^{n}(i,k)), & \,\,\,\varrho_{k}:=\tilde{L}(\cup_{j\geq k+1}\mathcal{P}_{A}^{n}(k+1,j));\label{defchirho}\\
			\alpha_{k,j}:=\tilde{L}(\mathcal{P}_{A}^{n}(k,j)),\,\,\, & \gamma_{k}:=\tilde{L}(\cup_{j\geq n+1}\mathcal{P}_{A}^{n}(k+1,j)),\label{eq:defalpha_gamma}
		\end{align}
		and set
		\begin{equation}
			\begin{aligned}\beta_{k,j,l}^{(1)} & =\tilde{L}(\cup_{i=0}^{k-l}\mathcal{P}_{A}^{n}(i,j));\,\, \,  \beta_{k,j,l}^{(2)} =\tilde{L}(\cup_{i=0}^{k-l}\cup_{m=k-l}^{j}\mathcal{P}_{A}^{n}(i,m));\\
				\beta_{k,l}^{(3)} & =\tilde{L}(\cup_{i=0}^{k-l}\cup_{m\geq n+1}\mathcal{P}_{A}^{n}(i,m)).
			\end{aligned}
			\label{defbetasdecompX}
		\end{equation}
		Using these random variables, we obtain the decomposition
		\begin{equation}
			\begin{aligned}
				\tilde{a}(l\Delta_{n})-\mathbb{E}(\tilde{a}(l\Delta_{n}))=&\frac{1}{n\Delta_{n}}\sum_{k=l}^{n-1}\left[\tilde{X}_{(k-l)\Delta_{n}}\chi_{k}-\mathbb{E}(\tilde{X}_{(k-l)\Delta_{n}}\chi_{k})\right]\\
				&-\frac{1}{n\Delta_{n}}\sum_{k=l}^{n-1}\tilde{X}_{(k-l)\Delta_{n}}\varrho_{k}.
			\end{aligned}\label{eq:first_decomp}
		\end{equation}
		Moreover, by using that 
		\begin{equation}
			\tilde{X}_{(k-l_{n})\Delta_{n}}=\beta_{k,j,l}^{(2)}+\sum_{m=j+1}^{n}\beta_{k,m,l}^{(1)}+\beta_{k,l}^{(3)},\label{eq:X_dec_betas}
		\end{equation}
		we further have
		\begin{equation}
			\tilde{a}(l\Delta_{n})-\mathbb{E}(\tilde{a}(l\Delta_{n}))=\frac{1}{n\Delta_{n}}\sum_{j=l}^{n}\varXi_{j,l,n}+\frac{1}{n\Delta_{n}}\sum_{j=l}^{n}R_{j,l,n},\label{eq:second_decomp_mtgdiff}
		\end{equation}
		where $\varXi_{j,l,n}=\sum_{\ell=1}^{4}\xi_{j,l,n}^{(\ell)}$ and
		$R_{j,l,n}=\sum_{\ell=1}^{3}\rho_{j,l,n}^{(\ell)}$ in which 
		\begin{equation}
			\begin{aligned}\xi_{j,l,n}^{(1)} & =\left[\chi_{j}\beta_{j,j,l}^{(2)}-\mathbb{E}(\chi_{j}\beta_{j,j,l}^{(2)})\right];\,\,\xi_{j,l,n}^{(2)}=\sum_{k=l}^{j-1}\chi_{k}\beta_{k,j,l}^{(1)};\,\,	\xi_{j,l,n}^{(3)} =-\sum_{k=l}^{j-1}\alpha_{k+1,j}\beta_{k,j-1,l}^{(2)};\\
				\xi_{j,l,n}^{(4)}&:=-\sum_{k=l}^{j-1}\beta_{k,j,l}^{(1)}\sum_{m=k+1}^{j}\alpha_{k+1,m}=-\sum_{k=l}^{j-1}\beta_{k,j,l}^{(1)}(\alpha_{k+1,j}+\varsigma_{k,j}),
			\end{aligned}
			\label{errorstpos}
		\end{equation}
		as well as
		\begin{equation}
			\begin{aligned}\rho_{j,l,n}^{(1)}= & \chi_{j}\beta_{j,l}^{(3)};\,\,\rho_{j,l,n}^{(2)}=-\tilde{X}_{(j-l)\Delta_{n}}\gamma_{j};\\
				\rho_{j,l,n}^{(3)}= & -\beta_{j,l}^{(3)}\tilde{L}(\cup_{m=j+1}^{n}\mathcal{\mathcal{P}}_{A}^{n}(j+1,m)).
			\end{aligned}
			\label{eq:rest_negli}
		\end{equation}
		The following $r$-moments estimates were proven in \cite{SauriVeraart23}.
		
		\begin{lemma}\label{moment_estimates_chi's}Let $r\geq2$ and suppose
			that $\mathbb{E}(\rvert L^{\prime}\rvert^{2r})<\infty$. Then, 
			\begin{equation}
				\mathbb{E}(\rvert\xi_{j,l,n}^{(1)}\rvert^{r})\lesssim_{r,a,\mathfrak{t}_{L}}\Delta_{n};\,\,\,\,\mathbb{E}(\rvert\xi_{j,l,n}^{(2)}\rvert^{r})\lesssim_{r,a,\mathfrak{t}_{L}}\Delta_{n}+\Delta_{n}\int_{0}^{n\Delta_{n}}a(s)s^{r/2-1}\mathrm{d}s,\label{eq:momentest_chi_1_2}
			\end{equation}
			and 
			\begin{equation}
				\mathbb{E}(\rvert\xi_{j,l,n}^{(3)}\rvert^{r})\lesssim_{r,a,\mathfrak{t}_{L}}\Delta_{n};\,\,\,\,\mathbb{E}(\rvert\xi_{j,l,n}^{(4)}\rvert^{r})\lesssim_{r,a,\mathfrak{t}_{L}}\Delta_{n}+\Delta_{n}\int_{0}^{n\Delta_{n}}a(s)s^{r/2-1}\mathrm{d}s.\label{eq:momentest_chi_3_4}
			\end{equation}
			
		\end{lemma}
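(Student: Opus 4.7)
The overall strategy is to reduce every $\xi_{j,l,n}^{(\ell)}$ to moments of $\tilde{L}$ on explicit unions of the partition cells $\mathcal{P}_{A}^{n}(i,m)$ and then apply Lemma~\ref{moment_estimates_ahat-1}. The key Lebesgue measures -- computed via the substitution $u=t_{m}-s$ and the telescoping identity $\int_{A}^{\infty}[a(u)-a(u+\Delta_n)]\,\mathrm{d}u=\int_{A}^{A+\Delta_n}a(u)\,\mathrm{d}u$ -- are $\mathrm{Leb}(\bigcup_{i=0}^{k}\mathcal{P}_{A}^{n}(i,k))\lesssim\Delta_n$, $\mathrm{Leb}(\bigcup_{i=0}^{k-l}\mathcal{P}_{A}^{n}(i,j))\lesssim \Delta_n\,a((j-k+l)\Delta_n)$, and $\mathrm{Leb}(\bigcup_{i=0}^{k-l}\bigcup_{m=k-l}^{j-1}\mathcal{P}_{A}^{n}(i,m))\leq\int_{0}^{\infty}a(u)\,\mathrm{d}u<\infty$. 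Combined with Lemma~\ref{moment_estimates_ahat-1}, these produce the workhorse estimates $\mathbb{E}|\chi_k|^{r}\lesssim\Delta_n$, $\mathbb{E}|\alpha_{k+1,j}|^{r}\lesssim \ell_{k+1,j}$ with $\sum_k\ell_{k+1,j}\lesssim\Delta_n$, $\mathbb{E}|\beta_{k,j,l}^{(1)}|^{r}\lesssim \Delta_n a((j-k+l)\Delta_n)$, and $\mathbb{E}|\beta_{k,j-1,l}^{(2)}|^{r}\lesssim 1$, valid for $r\geq 2$ because each of the Lebesgue measures is bounded by $1$ for $n$ large.

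For $\xi^{(1)}$ the supports of $\chi_j$ and $\beta_{j,j,l}^{(2)}$ overlap only in the layer-$j$ region $U$ with $s\leq t_{j-l}$; writing $\chi_j=\tilde{L}(U)+\tilde{L}(V)$ and $\beta_{j,j,l}^{(2)}=\tilde{L}(U)+\tilde{L}(W)$ with $U,V,W$ pairwise disjoint and $\mathrm{Leb}(U),\mathrm{Leb}(V)\lesssim\Delta_n$ and $\mathrm{Leb}(W)\lesssim 1$, the centred product expands into four terms, each a product of $\tilde{L}$ on disjoint (hence independent) sets; Lemma~\ref{moment_estimates_ahat-1} together with the elementary $\mathbb{E}|X^{2}-\mathbb{E}X^{2}|^{r}\lesssim \mathbb{E}(X^{2r})+(\mathbb{E}X^{2})^{r}$ yields $\lesssim\Delta_n$. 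For $\xi^{(3)}$ the $\alpha_{k+1,j}$'s are independent (pairwise disjoint strips of layer $j$) and independent of every $\beta_{k,j-1,l}^{(2)}$; conditioning on the $\beta^{(2)}$'s renders $\xi^{(3)}$ a sum of independent mean-zero variables, and Rosenthal's inequality plus Jensen's (with weights $\ell_{k+1,j}/\sum_k\ell_{k+1,j}$) combined with $\mathbb{E}|\beta^{(2)}|^{r}\lesssim 1$ deliver the $\Delta_n$ bound.

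The substantive case is $\xi^{(2)}$; the treatment of $\xi^{(4)}$ is analogous after expanding $\alpha_{k+1,j}+\varsigma_{k,j}=\sum_{m=k+1}^{j}\alpha_{k+1,m}$ and using the independence of these $\alpha$'s across disjoint cells. Since the layer-$j$ factors $\beta_{k,j,l}^{(1)}=\sum_{i=0}^{k-l}\alpha_{i,j}$ form a nested, strongly correlated sequence, a naive Rosenthal argument is wasteful. For even integer $r$ my plan is to use the layer factorisation
\[
\mathbb{E}|\xi_{j,l,n}^{(2)}|^{r}=\sum_{k_1,\ldots,k_r}\mathbb{E}\bigl(\textstyle\prod_{p}\chi_{k_p}\bigr)\,\mathbb{E}\bigl(\textstyle\prod_{p}\beta_{k_p,j,l}^{(1)}\bigr),
\]
which is valid because layers $<j$ are independent of layer $j$. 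Independence and mean-zero of $(\chi_k)_k$ kill every tuple in which some $k_p$-value appears only once. The ``diagonal'' piece (all $k_p$ equal) contributes $\lesssim\sum_k\mathbb{E}\chi_k^{r}\mathbb{E}|\beta_{k,j,l}^{(1)}|^{r}\lesssim\Delta_n$. The dominant term comes from ``pair contractions'' grouping the $r$ indices into $r/2$ equal pairs at $r/2$ distinct values $k_1'<\cdots<k_{r/2}'$: iterated conditioning on the increments $\beta_{k_{p+1}'}^{(1)}-\beta_{k_p'}^{(1)}$ bounds $\mathbb{E}\prod_p(\beta_{k_p',j,l}^{(1)})^{2}$ by $\mathbb{E}(\beta_{k_1',j,l}^{(1)})^{r}\lesssim \Delta_n a((j-k_1'+l)\Delta_n)$, which combined with $\prod_p\mathbb{E}\chi_{k_p'}^{2}\lesssim\Delta_n^{r/2}$ and the combinatorial count $\binom{j-k_1'-1}{r/2-1}\sim(j-k_1')^{r/2-1}$ produces, after a Riemann-sum approximation, $\Delta_n\int_{0}^{n\Delta_n}a(s)s^{r/2-1}\,\mathrm{d}s$. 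The non-integer case $r\geq 2$ follows by H\"older interpolation between consecutive even integers.

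The main obstacle is verifying that the pair contractions indeed dominate: one must check that every tuple with higher multiplicities (triples, quadruples, or mixed coincidences involving $\mathbb{E}\chi_k^{3}$) contributes at most $O(\Delta_n)$ and in particular does not produce a polynomial weight in $s$ larger than $s^{r/2-1}$. Any such subleading tuple has strictly fewer than $r/2$ distinct $k_p$-values and hence ``loses'' at least one factor $(j-k_1')$ in the combinatorial count, which -- together with the sharp linear-in-measure estimate $\mathbb{E}|\chi_k|^{r}\lesssim\Delta_n$ inherited from the L\'evy part of Lemma~\ref{moment_estimates_ahat-1} -- brings its order down to $\Delta_n$, thereby confirming the stated bound.
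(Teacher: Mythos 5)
First, note that the paper does not actually prove this lemma: it is imported verbatim from \cite{SauriVeraart23} (``The following $r$-moments estimates were proven in \cite{SauriVeraart23}''), so there is no internal proof to compare against. The route used there, and echoed in this paper's proof of Lemma \ref{moment_estimates_ahat}, is to exploit the martingale-difference structure of $\chi_{k}\beta_{k,j,l}^{(1)}$ (and of $\beta_{k,j,l}^{(1)}\varsigma_{k,j}$, $\alpha_{k+1,j}\beta_{k,j-1,l}^{(2)}$) in $k$ and apply Rosenthal's inequality, which works directly for every real $r\geq2$ under exactly the assumed moment condition $\mathbb{E}(\lvert L^{\prime}\rvert^{2r})<\infty$. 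Your Lebesgue-measure computations, your treatment of $\xi^{(1)}$ and $\xi^{(3)}$, and your combinatorial expansion for $\xi^{(2)}$, $\xi^{(4)}$ at \emph{even integer} $r$ are all sound in substance (the bound $\mathbb{E}\prod_{p}(\beta_{k_{p}',j,l}^{(1)})^{2}\lesssim\mathrm{Leb}(\cup_{i=0}^{k_{1}'-l}\mathcal{P}_{A}^{n}(i,j))$ is cleanest via the cumulant/partition formula for independently scattered measures, since the intersection over any block of nested sets is the smallest one, but your conclusion is the right one).

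The genuine gap is the final reduction ``the non-integer case $r\geq2$ follows by H\"older interpolation between consecutive even integers.'' This step fails for two independent reasons. First, the hypothesis $\mathbb{E}(\lvert L^{\prime}\rvert^{2r})<\infty$ does not give you the $2r_{1}$ moments needed to run your even-integer argument at the upper endpoint $r_{1}>r$ (e.g.\ for $r=3$ you would need eighth moments of $L'$ but only have sixth). Second, even granting those moments, interpolation produces $\lVert\xi^{(2)}\rVert_{r}^{r}\lesssim\Delta_{n}(1+I_{r_{0}})^{u}(1+I_{r_{1}})^{v}$ with $I_{\rho}:=\int_{0}^{n\Delta_{n}}a(s)s^{\rho/2-1}\mathrm{d}s$ and $ur_{0}+vr_{1}=r$, $u+v=1$; by H\"older one only has $I_{r}\leq I_{r_{0}}^{u}I_{r_{1}}^{v}$, i.e.\ the geometric mean \emph{dominates} the target and is not dominated by $C(1+I_{r})$. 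Concretely, for $a(s)\asymp s^{-\alpha}$ with $r_{0}/2<r/2<\alpha<r_{1}/2$ (say $r_{0}=2$, $r=3$, $r_{1}=4$, $\alpha=1.6$) one has $I_{2},I_{3}=\mathrm{O}(1)$ while $I_{2}^{1/2}I_{4}^{1/2}\asymp(n\Delta_{n})^{0.2}\rightarrow\infty$, so the interpolated bound is strictly weaker than the stated one. To close the gap you should replace the interpolation by a direct argument valid for all real $r\geq2$: order the summands of $\xi^{(2)}_{j,l,n}$ as martingale differences with respect to $\mathscr{H}_{k}:=\mathscr{G}_{j}^{n}\lor\sigma(\chi_{l},\ldots,\chi_{k})$ and apply Rosenthal's (or, for $2\leq r\leq4$, the von Bahr--Esseen) inequality, controlling the bracket $\sum_{k}(\beta_{k,j,l}^{(1)})^{2}\mathbb{E}(\chi_{k}^{2})$ through the moment bounds on the nested partial sums $\beta_{k,j,l}^{(1)}=\sum_{i\leq k-l}\alpha_{i,j}$; this is what produces the weight $s^{r/2-1}$ for arbitrary $r\geq2$.
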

		With the previous estimates at hand we obtain the subsequent bounds.
		
		\begin{lemma}\label{moment_estimates_ahat} Let $\tilde{a}$ be as
			in (\ref{eq:def_a_tilde}) and assume that $\mathbb{E}(\rvert L^{\prime}\rvert^{2r})<\infty$
			for some $r\geq2$. Then, 
			\[
			\mathbb{E}(\rvert\tilde{a}(l\Delta_{n})-\mathbb{E}[\tilde{a}(l\Delta_{n})]\rvert^{r})\leq C_{r,a,\mathfrak{t}_{L}}/(n\Delta_{n})^{r/2},
			\]
			where $C_{r,a,\mathfrak{t}_{L}}>0$ is a constant only depending on
			$r$, $a$, and $\mathfrak{t}_{L}$.
			
		\end{lemma}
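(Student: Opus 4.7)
The plan is to use the martingale decomposition \eqref{eq:second_decomp_mtgdiff} to write
\begin{equation*}
\tilde{a}(l\Delta_{n})-\mathbb{E}(\tilde{a}(l\Delta_{n}))=\tfrac{1}{n\Delta_{n}}M_{n}+\tfrac{1}{n\Delta_{n}}R_{n},\,\,\,M_{n}:=\sum_{j=l}^{n}\Xi_{j,l,n},\,\,R_{n}:=\sum_{j=l}^{n}R_{j,l,n},
\end{equation*}
so that by the triangle inequality in $L^{r}$ the lemma follows once we show $\mathbb{E}\lvert M_{n}\rvert^{r}+\mathbb{E}\lvert R_{n}\rvert^{r}\lesssim(n\Delta_{n})^{r/2}$. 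The first step is to verify that $(\Xi_{j,l,n})_{j=l}^{n}$ is a martingale-difference sequence with respect to the column-wise filtration $(\mathscr{G}_{j}^{n})$ of \eqref{filtrationoverspace}. Each $\xi_{j,l,n}^{(\ell)}$ is $\mathscr{G}_{j}^{n}$-measurable by construction, while the zero conditional-mean property follows from the independent scattering of $L$: for each $\ell$ one splits the column-$j$ piece into its $\mathscr{G}_{j-1}^{n}$-measurable part and its pure column-$j$ part, and then applies $\mathbb{E}\tilde{L}=0$ to the latter.

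Once the martingale property is in hand, I would invoke the Burkholder--Rosenthal inequality:
\begin{equation*}
\mathbb{E}\lvert M_{n}\rvert^{r}\leq C_{r}\Big\{\sum_{j=l}^{n}\mathbb{E}\lvert\Xi_{j,l,n}\rvert^{r}+\mathbb{E}\Big[\sum_{j=l}^{n}\mathbb{E}(\Xi_{j,l,n}^{2}\mid\mathscr{G}_{j-1}^{n})\Big]^{r/2}\Big\}.
\end{equation*}
For the first sum, I would combine Lemma \ref{moment_estimates_chi's} with the crude estimate $\int_{0}^{n\Delta_{n}}a(s)s^{r/2-1}\mathrm{d}s\leq(n\Delta_{n})^{r/2-1}\int_{0}^{\infty}a(u)\mathrm{d}u$ (using that $a$ is non-increasing and integrable) to get $\mathbb{E}\lvert\Xi_{j,l,n}\rvert^{r}\lesssim\Delta_{n}(1+(n\Delta_{n})^{r/2-1})$, so that $\sum_{j}\mathbb{E}\lvert\Xi_{j,l,n}\rvert^{r}\lesssim(n\Delta_{n})^{r/2}$ by $n\Delta_{n}\geq1$ and $r\geq2$. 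For the conditional-variance term, I would expand $\mathbb{E}(\Xi_{j,l,n}^{2}\mid\mathscr{G}_{j-1}^{n})$ as a quadratic form in the $\mathscr{G}_{j-1}^{n}$-measurable variables $\chi_{k},\alpha_{k+1,m}$ using the covariance identities of Lemma \ref{moment_estimates_ahat-1}, writing it as $\mathbb{E}\Xi_{j,l,n}^{2}+W_{j}$ with $W_{j}$ a centred error. The deterministic part satisfies $\sum_{j}\mathbb{E}\Xi_{j,l,n}^{2}\leq Cn\Delta_{n}$ and contributes at most $(Cn\Delta_{n})^{r/2}$, while $\sum_{j}W_{j}$ is controlled either by iterating the same martingale machinery or by a direct second-moment calculation that leverages the column-wise independence of the $\chi_{k}$'s. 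A similar treatment handles $R_{n}$: orthogonality of $\rho_{j,l,n}^{(\ell)}$ across $j$ (because $\chi_{j}$ and $\beta_{j,l}^{(3)}$ live in disjoint columns) gives $\mathbb{E}\lvert R_{n}\rvert^{2}\lesssim\Delta_{n}\sum_{j}\int_{(n-j+l)\Delta_{n}}^{\infty}a(s)\mathrm{d}s\lesssim n\Delta_{n}\int_{0}^{\infty}a(s)\mathrm{d}s$, with analogous $L^{r}$-bounds via the same machinery.

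The principal obstacle will be bounding the conditional-variance term, since $\mathbb{E}(\Xi_{j,l,n}^{2}\mid\mathscr{G}_{j-1}^{n})$ is a genuinely random quadratic form with no pathwise $L^{\infty}$ bound. A naive Minkowski estimate on its $L^{r/2}$-norm produces an excess factor of order $n^{(r-2)/2}$, yielding only $n^{r-1}\Delta_{n}^{r/2}$ instead of the sharp $(n\Delta_{n})^{r/2}$. Resolving this requires carefully exploiting both the separation in Lemma \ref{moment_estimates_ahat-1} between the Gaussian part of $L$ (with $\mathrm{Leb}^{r/2}$-scaling) and its jump part (with $\mathrm{Leb}$-scaling), and the cross-column independence of the $\chi_{k}$'s that makes the off-diagonal contributions in the quadratic form vanish in expectation. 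The concentration argument showing that this random quadratic form stays close to its mean of order $n\Delta_{n}$ in $L^{r/2}$ is where the bulk of the technical work lies.
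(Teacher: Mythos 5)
Your overall strategy coincides with the paper's: split the error into martingale-difference arrays plus remainders, apply Rosenthal's inequality, feed Lemma \ref{moment_estimates_chi's} into the $\sum_{j}\mathbb{E}(\lvert\cdot\rvert^{r})$ term (your crude bound $\int_{0}^{n\Delta_{n}}a(s)s^{r/2-1}\mathrm{d}s\leq(n\Delta_{n})^{r/2-1}\mathrm{Leb}(A)$ is exactly what is needed there), and then deal with the predictable quadratic variation. The paper actually works from the coarser decomposition \eqref{eq:first_decomp} combined with \eqref{eq:X_dec_betas} -- keeping $\sum_{k}\tilde{X}_{(k-l)\Delta_{n}}\varrho_{k}$ intact as a $(\mathscr{F}_{j}^{n})$-martingale and treating $\rho^{(1)}_{j,l,n}$ with respect to an enlarged filtration -- rather than the full array \eqref{eq:second_decomp_mtgdiff}, but that difference is cosmetic. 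Note, however, that the remainder terms are \emph{not} lower order: the paper's Rosenthal bounds for $\rho^{(1)}$ and for $X\varrho$ come out at $\mathrm{O}((n\Delta_{n})^{r/2})$, the same order as the leading term, so an $L^{2}$ orthogonality computation plus ``the same machinery'' must actually be run in $L^{r}$ for these pieces too.

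The genuine gap is the one you flag yourself: the $L^{r/2}$ control of $\sum_{j}\mathbb{E}(\varXi_{j,l,n}^{2}\mid\mathscr{G}_{j-1}^{n})$ is asserted, not proved, and it is the bulk of the proof. The paper closes it by invoking Lemma 3 of \cite{SauriVeraart23} to identify the fluctuation of the conditional variance explicitly: for $\xi^{(1)}$ it is $\Delta_{n}\sum_{j}\{(\beta_{j,j-1,l}^{(2)})^{2}-\mathbb{E}[(\beta_{j,j-1,l}^{(2)})^{2}]\}$, handled by Jensen and \eqref{momentineq}; for $\xi^{(2)}$ it consists of two further martingale-difference sums, $\sum_{k}\chi_{k}\nu_{k}^{(1)}$ and $\sum_{k}[\chi_{k}^{2}-\mathbb{E}(\chi_{k}^{2})]\mathrm{Leb}(\cdots)$. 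For $r>4$ these are controlled by a \emph{second} application of Rosenthal plus Jensen and \eqref{momentineq}. For $2\leq r\leq4$ the exponent $r/2\leq2$ makes a second Rosenthal application unavailable, and the paper instead uses $\mathbb{E}(\lvert Y\rvert^{r/2})\leq\mathbb{E}(\lvert Y\rvert^{2})^{r/4}$ together with the von Bahr--Esseen inequality. Your sketch ("iterating the same martingale machinery or a direct second-moment calculation") does not anticipate this case split, and the iteration you propose stalls precisely in the range $r\leq4$; to complete the argument you need both the explicit structural identity for the conditional variance and the two separate regimes in $r$.
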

		
		\begin{proof}Without loss of generality assume that
			$L$ (and hence $X$) has mean $0$. Define
			\[
			E_{n}(l)=(n\Delta_{n})^{r/2}\mathbb{E}(\rvert\tilde{a}(l\Delta_{n})-\mathbb{E}[\tilde{a}(l\Delta_{n})]\rvert^{r}),\,\,l=0,\ldots,n-1.
			\]
			It suffices to show that $E_n(l)$ is bounded uniformly in $n$ and $l$. By (\ref{eq:first_decomp}) and (\ref{eq:X_dec_betas}) we
			get that 
			\[
			E_{n}(l)\leq4^{r-1}\sum_{\ell=1}^{4}E_{n}^{\ell}(l),
			\]
			where 
			\begin{align*}
				E_{n}^{1}(l) & =\frac{1}{(n\Delta_{n})^{r/2}}\mathbb{E}(\rvert\sum_{j=l}^{n}\xi_{j,l,n}^{(1)}\rvert^{r});\,\,\,E_{n}^{2}(l)=\frac{1}{(n\Delta_{n})^{r/2}}\mathbb{E}(\rvert\sum_{j=l}^{n}\xi_{j,l,n}^{(2)}\rvert^{r});\\
				E_{n}^{3}(l) & =\frac{1}{(n\Delta_{n})^{r/2}}\mathbb{E}(\rvert\sum_{j=l}^{n}\rho_{j,l,n}^{(1)}\rvert^{r});\,\,E_{n}^{4}(l)=\frac{1}{(n\Delta_{n})^{r/2}}\mathbb{E}(\rvert\sum_{j=l}^{n}X_{(j-l)\Delta_{n}}\varrho_{j}\rvert^{r}).
			\end{align*}
			Note that for every $j\geq l\geq0$, $(\xi_{j,l,n}^{(1)},\xi_{j,l,n}^{(2)})$,
			$\rho_{j,l,n}^{(1)}$, and $X_{(j-l)\Delta_{n}}\varrho_{j}$ are martingale
			differences with respect to  $\mathscr{G}_{j}^{n}$, $\ensuremath{\mathscr{G}_{j}^{n}\lor\sigma(\beta_{j^{\prime},l^{\prime}}^{(3)})_{0\leq l^{\prime}\leq j^{\prime}\leq j}},$
			and $\mathscr{F}_{j}^{n}$, respectively. Hence Rosenthal's inequality (see e.g. Theorem 2.12 in \cite{HallHeyde80}) yields
			\begin{equation}
				E_{n}^{4}(l)  \lesssim_{r}\frac{1}{(n\Delta_{n})^{r/2}}\sum_{j=l}^{n}\mathbb{E}(\rvert X_{(j-l)\Delta_{n}}\varrho_{j}\rvert^{r})+\mathbb{E}\left[\left|\frac{1}{n\Delta_{n}}\sum_{j=l}^{n}(X_{(j-l)\Delta_{n}})^{2}\mathbb{E}(\varrho_{j}^{2})\right|^{r/2}\right]
				\label{eq:rosental_Jensen_momentineq}
			\end{equation}
			Using the stationarity of $X$, Jensen's inequality, and \eqref{momentineq} we obtain that 
			\[
			E_n^4(l)
			\lesssim_{r,a,\mathfrak t_L}
			\frac{\mathbb{E}(|X_0|^r)}{(n\Delta_n)^{r/2-1}}
			+
			\mathbb{E}(|X_0|^r) .
			\]
			Similarly, we deduce that
			\[
			E_{n}^{4}(l) \lesssim_{r,a,\mathfrak{t}_{L}}\frac{1}{(n\Delta_{n})^{r/2-1}}+1.
			\]
			For $i=1,2$, a further application of Rosenthal's inequality results in 
			\[
			E_n^i(l)
			\lesssim_r
			\frac{1}{(n\Delta_n)^{r/2}}
			\sum_{j=l}^n
			\mathbb{E}(|\xi^{(i)}_{j,n}|^r)
			+
			E_n^{i\prime}(l).
			\]
			where
			\[
			E_{n}^{i}(l)^{\prime}=\mathbb{E}\left[\left|\frac{1}{n\Delta_{n}}\sum_{j=l}^{n}\mathbb{E}((\xi_{j,n}^{(i)})^{2}\mid\mathscr{G}_{j-1}^{n})\right|^{r/2}\right],
			\]
			Lemma \ref{moment_estimates_chi's} gives
			\[\mathbb{E}(\rvert\xi_{j,n}^{(1)}\rvert^{r})\lesssim_{r,a,\mathfrak{t}_{L}}\Delta_{n},\qquad \mathbb{E}(\rvert\xi_{j,n}^{(2)}\rvert^{r}) \lesssim_{r,a,\mathfrak{t}_{L}}\Delta_{n}+\frac{(\Delta_{n}n)^{r/2}}{n}.\]
			Thus, we are left to bound $E_{n}^{i}(l)^{\prime}$. From Lemma 3 in \cite{SauriVeraart23}
			and \eqref{momentineq} we get that 
			\begin{align*}
				\sum_{j=l}^{n}\mathbb{E}((\xi_{j,n}^{(1)})^{2}\mid\mathscr{G}_{j-1}^{n}) & \lesssim_{a,\mathfrak{t}_{L},r}n\Delta_{n}+\Delta_{n}\sum_{j=l+1}^{n}\left\{ (\beta_{j,j-1,l}^{(2)})^{2}-\mathbb{E}[(\beta_{j,j-1,l}^{(2)})^{2}]\right\} ;\\
				\sum_{j=l}^{n}\mathbb{E}((\xi_{j,n}^{(2)})^{2}\mid\mathscr{G}_{j-1}^{n}) & \lesssim_{a,\mathfrak{t}_{L},r}n\Delta_{n}+\sum_{k=l+1}^{n}\chi_{k}\nu_{k}^{(1)}\\
				& +\sum_{k=l}^{n}[\chi_{k}^{2}-\mathbb{E}(\chi_{k}^{2})]\mathrm{Leb}(\cup_{i=0}^{k-l}\cup_{j=k+1}^{n}\mathcal{P}_{A}^{n}(i,j)),
			\end{align*}
			where $\nu_{k}^{(1)}:=\sum_{k^{\prime}=l}^{k-1}\chi_{k^{\prime}}\mathrm{Leb}(\cup_{i=0}^{k^{\prime}-l}\cup_{j=k+1}^{n-1}\mathcal{P}_{A}^{n}(i,j))$.
			This, together with \eqref{momentineq}, and Jensen's inequality shows that $E_{n}^{1}(l)^{\prime}$ is uniformly bounded.  
			Now, if $r>4$ a double application of the Rosenthal's inequality yields
			\begin{align*}
				E_{n}^{2}(l)^{\prime}\lesssim_{a,\mathfrak{t}_{L},r} &1+\frac{1}{(n\Delta_{n})^{r/2}}\sum_{k=l}^{n}\mathbb{E}(\rvert\chi_{k}^{2}-\mathbb{E}(\chi_{k}^{2})\rvert^{r/2})\\
				&+\frac{1}{(n\Delta_{n})^{r/2}}\left|\sum_{k=l}^{n}\mathbb{E}(\rvert\chi_{k}^{2}-\mathbb{E}(\chi_{k}^{2})\rvert^{2})\right|^{r/4}\\
				&+		\frac{1}{(n\Delta_{n})^{r/2}}\sum_{k=l}^{n}\mathbb{E}(\rvert\chi_{k}\nu_{k}^{(1)}\rvert^{r/2})+\frac{1}{(n\Delta_{n})^{r/4}}\mathbb{E}\left[\left|\frac{1}{n}\sum_{k=l}^{n}(\nu_{k}^{(1)})^{2}\right|^{r/4}\right].
			\end{align*}
			Each term on the right-hand side can be shown to be bounded exactly as above.
			Finally, if $r\leq4$ we use the inequality $\mathbb{E}(\rvert Y\rvert^{r/2})\leq\mathbb{E}(\rvert Y\rvert^{2})^{r/4}$ along with the von Bahr-Esseen inequality (see \cite[9.3.b]{LinBai10}) to deduce that 
			\[ 	E_{n}^{2}(l)^{\prime}\lesssim  1+\frac{1}{(n\Delta_{n})^{r/2}}\left|\sum_{k=l}^{n}\mathbb{E}[(\chi_{k}\nu_{k}^{(1)})^{2}]\right|^{r/4}+\frac{1}{(n\Delta_{n})^{r/2}}\sum_{k=l}^{n}\mathbb{E}(\rvert\chi_{k}^{2}-\mathbb{E}(\chi_{k}^{2})\rvert^{r/2}), \]
			and the expression above is uniformly bounded due to \eqref{momentineq}.
		\end{proof}
		
		We conclude this part with an estimate that will provide a way to
		control the approximation error in our statistics.
		
		\begin{lemma}\label{lemma_error_term.} Let $r\geq1,v\geq0$ and
			set 
			\[
			A^{n}:=\Delta_{n}\sum_{l=0}^{m_{n}-1}a(l\Delta_{n})^{v}\rvert\hat{a}(l\Delta_{n})-a(l\Delta_{n})\rvert^{r},\,\,m_{n}\in\mathbb{N},m_{n}\leq n.
			\]
			Suppose that $\mathbb{E}(\rvert L^{\prime}\rvert^{2r\lor4})<\infty$
			and that $m_{n}\Delta_n\rightarrow\jmath\in(0,+\infty].$ Then, 
			\begin{equation}
				A^{n}=\mathrm{O}_{\mathbb{P}}((n\Delta_{n})^{-r/2}\int_{0}^{m_{n}\Delta_{n}}a(s)^{v}\mathrm{d}s)+\mathrm{o}(1).\label{eq:error_term_taylo1}
			\end{equation}
			If Assumption \ref{as:trawl} holds, we further have that 
			\begin{equation}
				A^{n}=\mathrm{O}_{\mathbb{P}}((n\Delta_{n})^{-r/2}\int_{0}^{m_{n}\Delta_{n}}a(s)^{v}\mathrm{d}s)+\mathrm{O}(\Delta_{n}^{r}).\label{eq:eq:error_term_taylo1}
			\end{equation}

		\end{lemma}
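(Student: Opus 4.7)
The plan is to decompose the error via~\eqref{eq:decompahat_atilde_plusbias} as $\hat a(l\Delta_n)-a(l\Delta_n)=T_1(l)+T_2(l)+T_3(l)+T_4(l)$, where $T_1(l)=\tilde a(l\Delta_n)-\mathbb{E}[\tilde a(l\Delta_n)]$ is the centred stochastic term, $T_2(l)=(X_{n\Delta_n}-X_{l\Delta_n})S_n/(n^2\Delta_n)$ is the sample-mean correction, $T_3(l)=\frac{n-l}{n}\Delta_n^{-1}\int_{l\Delta_n}^{(l+1)\Delta_n}[a(s)-a(l\Delta_n)]\,\mathrm{d}s$ is the intra-cell discretisation error, and $T_4(l)=-la(l\Delta_n)/n$ is the end-of-sample deterministic bias. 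The elementary inequality $|\sum_i T_i|^r\leq 4^{r-1}\sum_i|T_i|^r$ reduces the lemma to bounding the four weighted sums $A^n_i:=\Delta_n\sum_{l=0}^{m_n-1}a(l\Delta_n)^v|T_i(l)|^r$ separately.

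The stochastic contributions $A^n_1$ and $A^n_2$ carry the main rate. For $A^n_1$, Lemma~\ref{moment_estimates_ahat} gives $\mathbb{E}|T_1(l)|^r\leq C(n\Delta_n)^{-r/2}$ uniformly in $l$; combining this with the Riemann-sum upper bound $\Delta_n\sum_{l=0}^{m_n-1}a(l\Delta_n)^v\leq\int_0^{m_n\Delta_n}a(s)^v\,\mathrm{d}s+\Delta_n a(0)^v$ (valid because $a$ is non-increasing), Fubini and Markov yield $A^n_1=\mathrm{O}_{\mathbb{P}}((n\Delta_n)^{-r/2}\int_0^{m_n\Delta_n}a(s)^v\,\mathrm{d}s)+\mathrm{O}_{\mathbb{P}}((n\Delta_n)^{-r/2}\Delta_n)$. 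For $A^n_2$, I would factor out the $l$-independent multiplier $|S_n/(n^2\Delta_n)|^r$; stationarity together with $\mathbb{E}|\tilde X_0|<\infty$ gives $\mathbb{E}|S_n|\leq n\mathbb{E}|\tilde X_0|$ and hence $|S_n/(n^2\Delta_n)|=\mathrm{O}_{\mathbb{P}}((n\Delta_n)^{-1})$, while $\mathbb{E}|X_{n\Delta_n}-X_{l\Delta_n}|^r\lesssim \mathbb{E}|X_0|^r$ by stationarity, yielding $A^n_2=\mathrm{O}_{\mathbb{P}}((n\Delta_n)^{-r}\int_0^{m_n\Delta_n}a(s)^v\,\mathrm{d}s)+\mathrm{o}_{\mathbb{P}}(1)$; this is dominated by the $A^n_1$ bound since $(n\Delta_n)^{-r}\leq(n\Delta_n)^{-r/2}$ eventually.

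For the deterministic pieces, monotonicity of $a$ yields $|T_3(l)|\leq a(l\Delta_n)-a((l+1)\Delta_n)$. Without Assumption~\ref{as:trawl}, the trivial bound $|T_3(l)|^{r-1}\leq a(0)^{r-1}$ combined with the telescoping estimate $\sum_l[a(l\Delta_n)-a((l+1)\Delta_n)]\leq a(0)$ gives $A^n_3\leq a(0)^{v+r}\Delta_n=\mathrm{o}(1)$; under Assumption~\ref{as:trawl}, the c\`adl\`ag decay $\phi(s)=\mathrm{O}(s^{-\alpha-1})$ forces $\|\phi\|_\infty<\infty$, so $|T_3(l)|\leq\|\phi\|_\infty\Delta_n$ and $A^n_3\leq C\Delta_n^r(\int_0^{m_n\Delta_n}a(s)^v\,\mathrm{d}s+\Delta_n)$. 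For $A^n_4$, the crucial observation is that monotonicity and integrability of $a$ imply $la(l\Delta_n)\leq\Delta_n^{-1}\int_0^{l\Delta_n}a(s)\,\mathrm{d}s\leq\Delta_n^{-1}\int_0^\infty a(s)\,\mathrm{d}s$, whence $(l/n)^r a(l\Delta_n)^r\leq C(n\Delta_n)^{-r}$ and $A^n_4\leq C(n\Delta_n)^{-r}(\int_0^{m_n\Delta_n}a(s)^v\,\mathrm{d}s+\Delta_n)$, absorbed exactly as $A^n_2$.

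Summing the four pieces yields the first claim with an $\mathrm{o}(1)$ residual coming from the $(n\Delta_n)^{-r/2}\Delta_n$, $(n\Delta_n)^{-r}\Delta_n$ and $\Delta_n$-type terms. Under Assumption~\ref{as:trawl}, the condition $n\Delta_n^3\rightarrow 0$ gives $\Delta_n^r=\mathrm{o}((n\Delta_n)^{-r/2})$, which allows the $\Delta_n^r\int a^v$ contribution from $A^n_3$ to be absorbed into the leading stochastic order; the remaining $\int a^v$-free residuals are then bundled into $\mathrm{O}(\Delta_n^r)$ after using that $\int_0^{m_n\Delta_n}a(s)^v\,\mathrm{d}s$ is bounded away from $0$ (since $m_n\Delta_n\rightarrow\jmath\in(0,+\infty]$ and $a>0$), which allows a term of size $(n\Delta_n)^{-r}\Delta_n$ to be estimated either trivially by $(n\Delta_n)^{-r/2}\int a^v$ or by $\Delta_n^r$. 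The main technical obstacle, namely the sharp $L^r$-estimate for the martingale-like fluctuation $T_1$, has already been dealt with in Lemma~\ref{moment_estimates_ahat}; what remains here is a careful comparison of rates and elementary monotone Riemann-sum bookkeeping.
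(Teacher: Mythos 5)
Your proof is correct and follows essentially the same route as the paper: the same four-term decomposition via \eqref{eq:decompahat_atilde_plusbias}, with Lemma \ref{moment_estimates_ahat} supplying the $(n\Delta_{n})^{-r/2}$ rate for the stochastic part and monotone/telescoping bookkeeping for the deterministic parts. The only (harmless) deviations are that you bound the end-of-sample bias by the elementary inequality $l\Delta_{n}a(l\Delta_{n})\leq\int_{0}^{l\Delta_{n}}a(s)\mathrm{d}s\leq\mathrm{Leb}(A)$ where the paper invokes $xa(x)\rightarrow0$, and that under Assumption \ref{as:trawl} you use the crude bound $\lVert\phi\rVert_{\infty}\Delta_{n}$ for the discretization error and absorb the resulting $\Delta_{n}^{r}\int_{0}^{m_{n}\Delta_{n}}a(s)^{v}\mathrm{d}s$ into the leading $\mathrm{O}_{\mathbb{P}}$ term via $n\Delta_{n}^{3}\rightarrow0$, whereas the paper exploits the tail decay $\phi(s)=\mathrm{O}(s^{-\alpha-1})$ to obtain $\mathrm{O}(\Delta_{n}^{r})$ directly.
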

		
		\begin{proof}From (\ref{eq:decompahat_atilde_plusbias})
			\begin{equation}
				\begin{aligned}A^{n}\lesssim_{r} & \Delta_{n}\sum_{l=0}^{m_{n}-1}a(l\Delta_{n})^{v}\rvert\tilde{a}(l\Delta_{n})-\mathbb{E}(\tilde{a}(l\Delta_{n}))\rvert^{r}\\
					& +\rvert\frac{1}{n}S_{n}\rvert^{r}\frac{1}{(n\Delta_{n})^{r}}\Delta_{n}\sum_{l=0}^{m_{n}-1}a(l\Delta_{n})^{v}\rvert X_{n\Delta_{n}}-X_{l\Delta_{n}}\rvert^{r}\\
					& +\frac{1}{(n\Delta_{n})^{r}}\int_{0}^{m_{n}\Delta_{n}}\rvert\tau_{n}(s)a(\tau_{n}(s))^{1+v/r}\rvert^{r}\mathrm{d}s\\
					& +\Delta_{n}\sum_{l=0}^{m_{n}-1}a(l\Delta_{n})^{v}\left(\frac{1}{\Delta_{n}}\int_{l\Delta_{n}}^{(l+1)\Delta_{n}}[a(l\Delta_{n})-a(s)]\mathrm{d}s\right)^{r}.
				\end{aligned}
				\label{eq:error_taylor}
			\end{equation}
			By Lemma \ref{moment_estimates_ahat} 
			\[
			\Delta_{n}\sum_{l=0}^{m_{n}-1}a(l\Delta_{n})^{v}\mathbb{E}(\rvert\tilde{a}(l\Delta_{n})-\mathbb{E}(\tilde{a}(l\Delta_{n}))\rvert^{r})\lesssim\frac{1}{(n\Delta_{n})^{r/2}}\int_{0}^{m_{n}\Delta_{n}}a(\tau_{n}(s))^{v}\mathrm{d}s.
			\]
			Using the stationarity of $X$ we also obtain
			\[
			\frac{1}{(n\Delta_{n})^{r}}\Delta_{n}\sum_{l=0}^{m_{n}-1}a(l\Delta_{n})^{v}\mathbb{E}(\rvert X_{n\Delta_{n}}-X_{l\Delta_{n}}\rvert^{r})\lesssim\frac{1}{(n\Delta_{n})^{r}}\int_{0}^{m_{n}\Delta_{n}}a(\tau_{n}(s))^{v}\mathrm{d}s.
			\]
			Furthermore, by Jensen's inequality and the monotonicity of $a$, we
			deduce that the last summand in \eqref{eq:error_taylor} is bounded by 
			\begin{equation}
				\int_{0}^{m_{n}\Delta_{n}}a(\tau_{n}(s))^{v}[a(\tau_{n}(s))-a(s)]^{r}\mathrm{d}s\lesssim_{a,r} \Delta_{n}+\int_{0}^{\Delta_{n}}a(s)^{v+1}\mathrm{d}s \lesssim_{a} \Delta_{n}.
				\label{eq:error_last_sum}
			\end{equation}
			Moreover, since $a$ is integrable and of bounded variation, it holds that $xa(x)\rightarrow0$ as $x\rightarrow\infty$ (see the proof of Lemma 5.4 in \cite{PakkaPassOSauriVer20}). Consequently
			\[
			\frac{1}{(n\Delta_{n})^{r}}\int_{0}^{m_{n}\Delta_{n}}\rvert\tau_{n}(s)a(\tau_{n}(s))^{1+v/r}\rvert^{r}\mathrm{d}s\lesssim_{r,a}\frac{1}{(n\Delta_{n})^{r}}\int_{0}^{m_{n}\Delta_{n}}a(\tau_{n}(s))^{v}\mathrm{d}s.
			\]
			In view that 
			\[
			\int_{0}^{m_{n}\Delta_{n}}a(\tau_{n}(s))^{\upsilon}\mathrm{d}s\leq a(0)^{\upsilon}\Delta_{n}+\int_{0}^{m_{n}\Delta_{n}}a(s)^{v}\mathrm{d}s,
			\]
			relation (\ref{eq:error_term_taylo1})follows. Finally, if Assumption \ref{as:trawl}
			is satisfied then there is $y_{0}>0$ such that 
			\begin{equation}\label{eq:assumption_ac_estimate}
				\frac{1}{\Delta_{n}}\int_{l\Delta_{n}}^{(l+1)\Delta_{n}}[a(l\Delta_{n})-a(s)]\mathrm{d}s\leq\mathbf{1}_{l\Delta_{n}\leq y_{0}}\Delta_{n}+\mathbf{1}_{l\Delta_{n}>y_{0}}\Delta_{n}(l\Delta_{n})^{-\alpha}.
			\end{equation}
			Thus, the estimate in \eqref{eq:error_last_sum} can be replaced by
			
			\[ \Delta_{n}^{r}(\mathrm{O}(1)+\int_{y_{0}}^{\infty}s^{-r\alpha}\mathrm{d}s)\leq C\Delta_{n}^{r}, \]
			which gives (\ref{eq:eq:error_term_taylo1}).\end{proof}
		
		\subsection{Proof of Theorems  \ref{consitency_Phin}  - \ref{consitency_Lambdan_bar}}
		
		We start by showing the consistency of $\Psi^{n}(g)$. The proof follows
		the idea of the proof of Theorem 9.4.1 in \cite{JacProt11}. For the rest of this section
		$T>0$ is fixed and choose $n$ large enough such that $(n-1)\Delta_{n}>T.$
		
		\begin{proof}[Proof of Theorem \ref{consitency_Phin}] Without loss assume that $g\geq0$  (otherwise analyze
			$g^{+}$ and $g^{-}$ separately). Thus, we only need to check that
			for every $0\leq t\leq T$, $\Psi_{t}^{n}(g)\overset{\mathbb{P}}{\rightarrow}\Psi_{t}(g)$.
			Suppose first that $g$ is bounded. Then 
			\[
			\mathbb{E}(\rvert\Psi_{t}^{n}(g)-\Psi_{t}(g)\rvert)\leq\Delta_{n}+\int_{0}^{t}\mathbb{E}(\rvert g(\hat{a}(s))-g(a(s))\rvert)\mathrm{d}s.
			\]
			Theorem 1 in \cite{SauriVeraart23} along with the boundedness of
			$g$ allow us to apply the Dominated Convergence twice to conclude
			that the right-hand side of the previous inequality goes to $0$ as
			$n\rightarrow\infty$. Suppose now that $g$ is of polynomial growth
			of order $q>0$ (when $q=0$ $g$ is bounded). Let $\psi:\mathbb{R}^{+}\rightarrow[0,1]$
			be a function of class $\mathcal{C}^{\infty}$ such that $\mathbf{1}_{[1,\infty)}\leq\psi\leq\mathbf{1}_{[1/2,\infty)}$,
			and for every $m\in\mathbb{N}$, set $\psi_{m}(x):=1-\psi(\rvert x\rvert/m)$
			as well as $\tilde{\psi}_{m}(x)=\psi(\rvert x\rvert/m).$ Write
			\[
			\Psi_{t}^{n}(g)=\Psi_{t}^{n}(g_{m})+\Psi_{t}^{n}(\tilde{g}_{m}),
			\]
			where $g_{m}=g\psi_{m}$ and $\tilde{g}_{m}=g\tilde{\psi}_{m}$. Since $g_{m}$ is continuous and bounded, the first part
			of the proof gives that $\Psi_{t}^{n}(g_{m})\overset{\mathbb{P}}{\rightarrow}\int_{0}^{t}g_{m}(a(s))\mathrm{d}s$.
			In view that $\int_{0}^{t}g_{m}(a(s))\mathrm{d}s\rightarrow\Psi_{t}(g),$
			as $m\rightarrow\infty$, we are left to show that for every $\eta>0$
			\begin{equation}
				\lim_{m\rightarrow\infty}\limsup_{n\rightarrow\infty}\mathbb{P}(\rvert\Psi_{t}^{n}(\tilde{g}_{m})\rvert>\eta)=0.\label{eq:negligiapprox}
			\end{equation}
			Since $g$ has polynomial growth, it holds that $\rvert\tilde{g}_{m}(x)\rvert\leq C_{q}\rvert x\rvert^{q}\mathbf{1}_{\rvert x\rvert>m}$. Hence, (\ref{eq:negligiapprox})
			follows if 
			\begin{equation}
				\lim_{m\rightarrow\infty}\limsup_{n\rightarrow\infty}\mathbb{P}(\rvert Y_{n}^{m}(t)\rvert>\eta)=0,\label{eq:negligiapprox-1}
			\end{equation}
			where 
			\[
			Y_{n}^{m}(t)=\Delta_{n}\sum_{l=0}^{[t/\Delta_{n}]-1}\rvert\hat{a}(l\Delta_{n})\rvert^{q}\mathbf{1}_{\{\rvert\hat{a}(l\Delta_{n})\rvert>m\}}.
			\]
			From \eqref{eq:decompahat_atilde_plusbias} 
			\[
			Y_{n}^{m}(t)  \lesssim_q\Delta_{n}\sum_{l=0}^{[t/\Delta_{n}]-1}\rvert\tilde{a}(l\Delta_{n})\rvert^{q}\mathbf{1}_{\{\rvert\hat{a}(l\Delta_{n})\rvert>m\}}+\rvert\frac{1}{n}S_{n}\rvert^{q}\frac{1}{(n\Delta_{n})^{q}}\Delta_{n}\sum_{l=0}^{[t/\Delta_{n}]-1}\rvert X_{n\Delta_{n}}-X_{l\Delta_{n}}\rvert^{q}.
			\]
			Denote by $Y_{n}^{m,1}$ and $Y_{n}^{2}$ the last two terms. As in the proof of Lemma \ref{lemma_error_term.}, $Y_{n}^{2}\overset{\mathbb{P}}{\rightarrow}0$.
			Thus, 
			\begin{equation}
				\limsup_{n\rightarrow\infty}\mathbb{P}(\rvert Y_{n}^{m}(t)\rvert>\eta)\leq\limsup_{n\rightarrow\infty}\mathbb{P}(C_{q}\rvert Y_{n}^{m,1}\rvert>\eta/2).\label{eq:limsup_ineq}
			\end{equation}
			Moreover, Lemma \ref{moment_estimates_ahat} and Jensen's inequality
			guarantee that 
			\begin{align*}
				\mathbb{E}(\rvert Y_{n}^{m,1}\rvert)\lesssim_{q} & \Delta_{n}\sum_{l=0}^{[t/\Delta_{n}]-1}\mathbb{E}(\rvert\tilde{a}(l\Delta_{n})-\mathbb{E}[\tilde{a}(l\Delta_{n})]\rvert^{q}\mathbf{1}_{\{\rvert\hat{a}(l\Delta_{n})\rvert>m\}})\\
				& +\Delta_{n}\sum_{l=0}^{[t/\Delta_{n}]-1}\mathbb{E}[\tilde{a}(l\Delta_{n})]^{q}\mathbb{P}(\rvert\hat{a}(l\Delta_{n})\rvert>m)\\
				\lesssim & \frac{t}{(n\Delta_{n})^{q/2}}+\sum_{l=0}^{[t/\Delta_{n}]-1}\int_{l\Delta_{n}}^{(l+1)\Delta_{n}}a(s)^{q}\mathrm{d}s\mathbb{P}(\rvert\hat{a}(l\Delta_{n})\rvert>m).
			\end{align*}
			By Markov's inequality 
			\[
			\mathbb{P}(\rvert\hat{a}(l\Delta_{n})\rvert>m)\leq  \frac{1}{m^q}\mathbb{E}(\rvert\tilde{a}(l\Delta_{n})\rvert^q).
			\]
			Using \eqref{eq:decompahat_atilde_plusbias} together with Lemma \ref{moment_estimates_ahat}, we obtain that $\mathbb{E}(\rvert\tilde{a}(l\Delta_{n})\rvert^q)$ is bounded uniformly in $n$ and $l$. 
			Therefore, 
			\[
			\mathbb{E}(\rvert Y_{n}^{m,1}\rvert)\lesssim\frac{1}{(n\Delta_{n})^{q/2}}+\frac{1}{m^q}\int_{0}^{t}a(s)^{q}\mathrm{d}s.
			\]
			Relation (\ref{eq:negligiapprox-1}) now follows from (\ref{eq:limsup_ineq}),
			Markov's inequality, and the previous estimate.\end{proof}
		
		Our next goal is to unify the proof of Theorems \ref{consitency_Lambdan}
		and \ref{consitency_Lambdan_bar}. To this end, we write  $\mathcal{S}^{n}(g)$
		and $\mathcal{S}(g)$ to denote any of the pairs $(\Psi^n(g),\Psi(g))$, $(\Lambda^n(g),\Lambda(g))$, or $(\bar{\Lambda}^n(g),\bar{\Lambda}(g))$.
		With this notation, 
		\begin{equation}
			\mathcal{S}_{t}^{n}(g)=\Delta_{n}\sum_{l=0}^{n-1}c_{l}^{n}(t)g(\hat{a}(l\Delta_{n})),\,\,\,0\leq t\leq T,\label{eq:simplifiedrepfunctionals}
		\end{equation}
		where 
		\begin{equation}
			c_{l}^{n}(t)=\begin{cases}
				\mathbf{1}_{0\leq l\leq[t/\Delta_{n}]-1} & \text{if }\mathcal{S}^{n}(g)=\Psi^{n}(g);\\
				\mathbf{1}_{[t/\Delta_{n}]\leq l\leq n-1} & \text{if }\mathcal{S}^{n}(g)=\Lambda^{n}(g);\\
				\mathbf{1}_{[t/\Delta_{n}]\leq l\leq N_{n}-1} & \text{if }\mathcal{S}^{n}(g)=\bar{\Lambda}^{n}(g).
			\end{cases}\label{eq:c_sumandsdef}
		\end{equation}
		If $g\in\mathcal{C}^1$, the Mean Value Theorem yields
		\begin{equation}
			\mathcal{S}_{t}^{n}(g)-\mathcal{S}_{t}(g)=U_{t}^{n}+\mathcal{R}_{t}^{n},\quad0\leq t\leq T,\label{Decomp_UR_consistency}  
		\end{equation}
		where 
		\begin{equation}
			U_{t}^{n}=\Delta_{n}\sum_{l=0}^{n-1}c_{l}^{n}(t)g^{\prime}(\theta_{l})\left[\hat{a}(l\Delta_{n})-a(l\Delta_{n})\right],\label{eq:errorU_consistency}
		\end{equation}
		for some $\hat{a}(l\Delta_{n})\land a(l\Delta_{n})\leq\theta_{l}\leq\hat{a}(l\Delta_{n})\lor a(l\Delta_{n})$, and
		\begin{equation}
			\mathcal{R}_{t}^{n}=\Delta_{n}\sum_{l=0}^{n-1}c_{l}^{n}(t)g(a(l\Delta_{n}))-\mathcal{S}_{t}(g).\label{eq:errorR_consistency}
		\end{equation}
		Hence Theorems \ref{consitency_Lambdan} and \ref{consitency_Lambdan_bar} follow once the processes $U^n$ and $\mathcal{R}^n$ are shown to be asymptotically negligible.  In what follows we write $m_{n}=n$ or $m_{n}=N_{n}$, depending on whether $c_{l}^{n}(t)=\mathbf{1}_{[t/\Delta_{n}]\leq l\leq n-1}$
		or $c_{l}^{n}(t)=\mathbf{1}_{[t/\Delta_{n}]\leq l\leq N_{n}-1}$, respectively.
		
		\begin{proof}[Proof of Theorems  \ref{consitency_Lambdan} and \ref{consitency_Lambdan_bar}]
			Let $g\in\mathfrak{C}_{p,q}^{1}$ with $p,q\geq1$ and recall the decomposition \eqref{Decomp_UR_consistency}. From the definition of $\mathcal{R}_{t}^{n}$ we get 
			\begin{equation}	
				\mathcal{R}_{t}^{n}=\int_{t}^{m_{n}\Delta_{n}}[g(a(\tau_{n}(s)))-g(a(s))]\mathrm{d}s+\int_{[t/\Delta_{n}]\Delta_{n}}^{t}g(a(\tau_{n}(s)))\mathrm{d}s-\int_{m_{n}\Delta_{n}}^{\infty}g(a(s))\mathrm{d}s.
				\label{eq:dec_Rn}
			\end{equation}
			Since $g(a(\cdot))$	is continuous and satisfies $\rvert g(a(s))\rvert\leq C_{a(0)}a(s)$,  (see \eqref{eq:boundsforg_gprime}), 
			we have that, uniformly on $t\in[0,T]$,
			\[
			\int_{[t/\Delta_{n}]\Delta_{n}}^{t}g(a(\tau_{n}(s)))\mathrm{d}s-\int_{m_{n}\Delta_{n}}^{\infty}g(a(s))ds\rightarrow0.
			\]
			Similarly, we deduce 
			\[
			\int_{t}^{m_{n}\Delta_{n}}\rvert g(a(\tau_{n}(s)))-g(a(s))\rvert\mathrm{d}s \lesssim_{a}\int_{0}^{m_{n}\Delta_{n}}[a(\tau_{n}(s))-a(s)]\mathrm{d}s
			\lesssim_{a}\Delta_{n},
			\]
			where in the last inequality we further used \eqref{eq:error_last_sum}.
			Hence $\mathcal{R}^{n}\overset{u.c.p}{\rightarrow}0$. Let us now turn our attention to $U^n$. Since $\rvert g^{\prime}(x)\rvert\lesssim\rvert x\rvert^{p}\mathbf{1}_{\rvert x\rvert\leq1}+\rvert x\rvert^{q}\mathbf{1}_{\rvert x\rvert>1}$,
			we obtain
			\begin{equation}
				\begin{aligned}\sup_{t\in[0,T]}\rvert U_{t}^{n}\rvert\lesssim & \Delta_{n}\sum_{l=0}^{m_{n}-1}\rvert\hat{a}(l\Delta_{n})-a(l\Delta_{n})\rvert^{p\land q+1}+\Delta_{n}\sum_{l=0}^{m_{n}-1}\rvert\hat{a}(l\Delta_{n})-a(l\Delta_{n})\rvert^{p\lor q+1}\\
					& +\Delta_{n}\sum_{l=0}^{m_{n}-1}a(l\Delta_{n})^{p\land q}\rvert\hat{a}(l\Delta_{n})-a(l\Delta_{n})\rvert
				\end{aligned}
				\label{eq:error_Ubound_unbounded}
			\end{equation}
			Applying Lemma \ref{lemma_error_term.} yields
			\[
			\sup_{t\in[0,T]}|U_t^n|
			=\mathrm{O}_{\mathbb{P}}(\frac{m_{n}}{n}(n\Delta_{n})^{\frac{1-p\land q}{2}})+\mathrm{O}_{\mathbb{P}}(1/\sqrt{n\Delta_{n}})+\mathrm{o}(1).
			\]
			If $p,q>1$ and $m_{n}=n$ (as in Theorem
			\ref{consitency_Lambdan}), the right-hand side of the previous estimate converges to zero. If $p\land q=1$
			and $m_{n}=N_{n}$ satisfies \eqref{eq:window_assumption} (as in Theorem \ref{consitency_Lambdan_bar}), the bound in \eqref{eq:error_Ubound_unbounded} becomes $\mathrm{O}_{\mathbb{P}}(\frac{N_{n}}{n})$, which also converges to zero by assumption. Thus $U^n$ and $\mathcal R^n$ are asymptotically negligible, completing the proof.\end{proof}
		
		We conclude this section by presenting a proof for Theorem \ref{bias_quadratic}.
		
		\begin{proof}[Proof of Theorem  \ref{bias_quadratic}]Arguing as in
			the preceding proof, we have that
			\[
			\Delta_{n}\sum_{l=0}^{n-1}\hat{a}(l\Delta_{n})^{2}-\int_{0}^{\infty}a(s)^{2}\mathrm{d}s=B_{n}+\mathrm{o}_{\mathbb{P}}(1)
			\]
			where $B_{n}=\Delta_{n}\sum_{l=0}^{n-1}\left[\hat{a}(l\Delta_{n})-a(l\Delta_{n})\right]^{2}$.
			Let 
			\[
			b_{l}^{n}:=\hat{a}(l\Delta_{n})-a(l\Delta_{n})-\left(\tilde{a}(l\Delta_{n})-\mathbb{E}(\tilde{a}(l\Delta_{n}))\right).
			\]
			Arguing exactly as in the proof of Lemma \ref{lemma_error_term.},
			we have that 
			\[
			\mathcal{E}_{n}:=\Delta_{n}\sum_{l=0}^{n-1}(b_{l}^{n})^{2}\overset{\mathbb{P}}{\rightarrow}0.
			\]
			Thus, by setting $B_{n}^{\prime}:=\Delta_{n}\sum_{l=0}^{n-1}\left[\tilde{a}(l\Delta_{n})-\mathbb{E}(\tilde{a}(l\Delta_{n}))\right]^{2}$
			and applying the reverse triangle inequality we deduce that 
			\[
			\left|\sqrt{B_{n}}-\sqrt{B_{n}^{\prime}}\right|\leq\sqrt{\mathcal{E}_{n}}\overset{\mathbb{P}}{\rightarrow}0.
			\]
			Therefore, it suffices to prove that $B_{n}^{\prime}\overset{\mathbb{P}}{\rightarrow}B$.
			Now, using the random variables introduced in \eqref{errorstpos} and \eqref{eq:second_decomp_mtgdiff}, we
			set
			\[
			B_{n}^{\prime\prime}:=\Delta_{n}\sum_{l=0}^{n-1}\left[\frac{1}{n\Delta_{n}}\sum_{j=l}^{n}(\xi_{j,l,n}^{\prime(1)}+\xi_{j,l,n}^{(3)})\right]^{2},
			\]
			where $\xi_{j,l,n}^{\prime(1)}=\chi_{j}\beta_{j,j-1,l}^{(2)}$, and let
			\[
			\mathcal{E}_{n}^{\prime}:=\Delta_{n}\sum_{l=1}^{n-1}\left[\frac{1}{n\Delta_{n}}\sum_{j=l}^{n-1}\left(R_{j,l,n}+\xi_{j,l,n}^{(1)}+\xi_{j,l,n}^{(2)}+\xi_{j,l,n}^{(4)}\right)\right]^{2}.
			\]
			By \eqref{eq:second_decomp_mtgdiff}, \eqref{eq:cond_variance} in Appendix \ref{Appendix1}, 
			and the definition of $R_{j,l,n}$, we get that $\mathbb{E}(\mathcal{E}_{n}^{\prime})\rightarrow0$.
			This shows, just as above, that $B_{n}^{\prime\prime}$ and $B_{n}^{\prime}$
			have the limit (and therefore the same limit as $B_{n}$). Arguing as in Lemma \ref{lemmaavar_approx}  in Appendix \ref{Appendix1}, we conclude that 
			\begin{align*}
				B_{n}^{\prime\prime} & :=\frac{\Delta_{n}}{(n\Delta_{n})^{2}}\sum_{j=1}^{n-1}\sum_{l=1}^{j}\left[(\xi_{j,l,n}^{\prime(1)})^{2}+(\xi_{j,l,n}^{(3)})^{2}+2\xi_{j,l,n}^{\prime(1)}\xi_{j,l,n}^{(3)}\right]\\
				& +\frac{2\Delta_{n}}{(n\Delta_{n})^{2}}\sum_{j=2}^{n-1}\sum_{l=1}^{j-1}(\xi_{j,l,n}^{\prime(1)}+\xi_{j,l,n}^{(3)})\sum_{j^{\prime}=l}^{j-1}(\xi_{j^{\prime},l,n}^{\prime(1)}+\xi_{j^{\prime},l,n}^{(3)})\\
				& =\frac{\Delta_{n}}{(n\Delta_{n})^{2}}\sum_{j=1}^{n-1}\sum_{l=1}^{j}\mathbb{E}\left[(\xi_{j,l,n}^{\prime(1)})^{2}+(\xi_{j,l,n}^{(3)})^{2}+2\xi_{j,l,n}^{\prime(1)}\xi_{j,l,n}^{(3)}\right]+\mathrm{o}_{\mathbb{P}}(1).
			\end{align*}
			The stated convergence now follows by invoking once again \eqref{eq:cond_variance} in Appendix \ref{Appendix1}.\end{proof}
		
		\subsection{Proof of Theorems \ref{CLT_Phin} and \ref{CLT_Lambdan_hatLambdan}}
		We begin by outlining the proof of the central limit theorems. Starting from a decomposition of the estimation error into a leading term and several remainder terms, we first show that the remainder terms are asymptotically negligible. The leading term is then rewritten as a sum of martingale differences plus a negligible error term. The proof proceeds by establishing tightness, reducing the convergence of the finite-dimensional to a martingale central limit theorem via the Cram\'er--Wold device, and verifying the convergence of the conditional variance and the Lyapunov condition. 
		
		Similarly to the previous subsection, fix $T>0$ and choose $n$
		large enough such that $(m_n-1)\Delta_{n}>T$, where, as before
		$m_{n}=n$ or $m_{n}=N_{n}$, depending whether $c_{l}^{n}(t)=\mathbf{1}_{[t/\Delta_{n}]\leq l\leq n-1}$
		or $c_{l}^{n}(t)=\mathbf{1}_{[t/\Delta_{n}]\leq l\leq N_{n}-1}$,
		respectively. 
		
		Once again, we unify the proofs by means of (\ref{eq:simplifiedrepfunctionals}).
		Specifically, for $g\in\mathcal{C}^{2}$, define
		\[
		Z_{t}^{n}:=\sqrt{n\Delta_{n}}\left(\mathcal{S}_{t}^{n}(g)-\mathcal{S}_{t}(g)\right).
		\]
		Then
		\[
		Z_t^n
		=
		V_t^n+\sum_{\ell=1}^5 U_t^{n,\ell},
		\]
		where (see (\ref{eq:c_sumandsdef}) and (\ref{eq:errorR_consistency}))
		\begin{equation}
			V_{t}^{n}:=\sqrt{n\Delta_{n}}\Delta_{n}\sum_{l=0}^{n-1}c_{l}^{n}(t)g^{\prime}(a(l\Delta_{n}))\left[\tilde{a}(l\Delta_{n})-\mathbb{E}(\tilde{a}(l\Delta_{n}))\right],\label{eq:leading_V_1}
		\end{equation}
		is the leading term, and the remainder terms are given by
		\begin{align*}
			U_{t}^{n,1}= & \frac{\sqrt{n\Delta_{n}}}{2}\Delta_{n}\sum_{l=0}^{n-1}c_{l}^{n}(t)g^{\prime\prime}(\theta_{l})\left[\hat{a}(l\Delta_{n})-a(l\Delta_{n})\right]^{2};\\
			U_{t}^{n,2}= & \sqrt{n\Delta_{n}}\sum_{l=0}^{n-1}c_{l}^{n}(t)\frac{(n-l)}{n}\int_{l\Delta_{n}}^{(l+1)\Delta_{n}}g^{\prime}(a(l\Delta_{n}))[a(s)-a(l\Delta_{n})]\mathrm{d}s;\\
			U_{t}^{n,3}= & -\sqrt{\frac{\Delta_{n}}{n}}\sum_{l=0}^{n-1}c_{l}^{n}(t)g^{\prime}(a(l\Delta_{n}))a(l\Delta_{n})l\Delta_{n};\\
			U_{t}^{n,4}= & \frac{1}{n}S_{n}\sqrt{\frac{\Delta_{n}}{n}}\sum_{l=0}^{n-1}c_{l}^{n}(t)g^{\prime}(a(l\Delta_{n}))(X_{n\Delta_{n}}-X_{l\Delta_{n}});\\
			U_{t}^{n,5}= & \sqrt{n\Delta_{n}}\mathcal{R}_{t}^{n}.
		\end{align*}
		The next lemma shows that, under our assumptions, all remainder terms are asymptotically negligible and the asymptotic behaviour is therefore determined by $V^n$.
		\begin{lemma}\label{error1_CLT} Under the assumptions of Theorems
			\ref{CLT_Phin} and \ref{CLT_Lambdan_hatLambdan}, $U^{n,i}\overset{u.c.p}{\rightarrow}0$
			for $i=1,\ldots,5$.
			
		\end{lemma}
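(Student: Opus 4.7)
The plan is to bound each of the five terms $U^{n,i}$ separately, exploiting three ingredients: the quantitative control of $\hat a-a$ furnished by Lemma \ref{lemma_error_term.} and Lemma \ref{moment_estimates_ahat}; the regularity of $a$ implied by Assumption \ref{as:trawl} (through the density $\phi$); and the sampling-scheme constraint $n\Delta_{n}^{3}\rightarrow0$, which provides the slack needed to absorb the $\sqrt{n\Delta_{n}}$ blow-up factor. When a term involves $g$, $\dot g$ or $g''$, we invoke Remark \ref{integrability_C2_pq}, which turns the behaviour of $g$ near $0$ and $\infty$ into the bound $|g^{(j)}(a(s))|\lesssim a(s)^{p\wedge q+d-j}$ valid for $s\geq0$, in particular yielding integrability of $s\mapsto \dot g(a(s))$ and $s\mapsto g(a(s))$ when $p,q>1$.

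For $U^{n,1}$, which carries the quadratic Taylor remainder, I would first localize on the high-probability event where $\hat a(l\Delta_{n})$ is bounded (using Lemma \ref{moment_estimates_ahat}), so that $|g''(\theta_{l})|$ is dominated by $|g''(a(l\Delta_{n}))|+o_{\mathbb{P}}(1)$, and then apply Lemma \ref{lemma_error_term.} with $r=2$ and $v$ matching the growth of $g''$. The resulting bound is $\sqrt{n\Delta_{n}}\cdot\mathrm{O}_{\mathbb{P}}((n\Delta_{n})^{-1})+\sqrt{n\Delta_{n}}\cdot\mathrm{O}(\Delta_{n}^{2}) = \mathrm{O}_{\mathbb{P}}((n\Delta_{n})^{-1/2}) + \mathrm{O}(\sqrt{n\Delta_{n}^{5}})$, both negligible under $n\Delta_{n}^{3}\to 0$. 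For $U^{n,2}$, the bound $a(s)-a(l\Delta_{n})\leq\int_{l\Delta_{n}}^{(l+1)\Delta_{n}}\phi(y)\mathrm{d}y$ from Assumption \ref{as:trawl} (cf.\ \eqref{eq:assumption_ac_estimate}) and the integrability of $\dot g(a(\cdot))$ lead to $\sup_t|U_{t}^{n,2}|\lesssim\sqrt{n\Delta_{n}}\,\Delta_{n}=\mathrm{O}(\sqrt{n\Delta_{n}^{3}})\to 0$. For $U^{n,3}$, the pointwise bound $|\dot g(a(l\Delta_{n}))a(l\Delta_{n})\,l\Delta_{n}|\lesssim (l\Delta_{n})a(l\Delta_{n})^{1+p\wedge q}$ together with $sa(s)\to 0$ (already verified in the proof of Lemma \ref{lemma_error_term.}) makes the Riemann sum convergent, so the $\sqrt{\Delta_{n}/n}$ prefactor forces $U^{n,3}\to 0$.

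The term $U^{n,4}$ is controlled by Cauchy–Schwarz: the partial sum satisfies $\mathbb{E}[(S_{n}/n)^{2}]\lesssim (n\Delta_{n})^{-1}\int_{0}^{n\Delta_{n}}a(s)\mathrm{d}s$, while $\mathbb{E}[(X_{n\Delta_{n}}-X_{l\Delta_{n}})^{2}]$ is uniformly bounded by $2\,\mathrm{Var}(X_{0})$, and the deterministic sum $\Delta_{n}\sum_{l}c_{l}^{n}(t)|\dot g(a(l\Delta_{n}))|$ is uniformly bounded on $[0,T]$ by integrability of $\dot g\circ a$. Combining these gives $\sup_t|U_{t}^{n,4}|=\mathrm{O}_{\mathbb{P}}(\sqrt{\Delta_{n}/n})\to 0$. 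Finally $U^{n,5}=\sqrt{n\Delta_{n}}\,\mathcal{R}_{t}^{n}$ is handled through the decomposition \eqref{eq:dec_Rn}: the first two summands produce a Riemann-sum error of order $\sqrt{n\Delta_{n}}\cdot\Delta_{n}=\mathrm{O}(\sqrt{n\Delta_{n}^{3}})$ (using the $\phi$-bound once more for $\Psi^{n}$ and $\Lambda^{n}$), and the tail integral $\sqrt{n\Delta_{n}}\int_{m_{n}\Delta_{n}}^{\infty}g(a(s))\mathrm{d}s$ is absent for $\Psi^{n}$, vanishes at the required rate for $\Lambda^{n}$ whenever $p,q>1$ (by Remark \ref{integrability_C2_pq}), and for $\bar\Lambda^{n}$ requires the full force of Assumption \ref{as:samplescheme}.

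I expect the tail bound in $U^{n,5}$ for $\bar\Lambda^{n}$ to be the main technical obstacle, because it is the single step that must simultaneously reconcile the test-function order $p$ near $0$, the decay exponent $\alpha$ of the trawl function, and the window rate $N_{n}\sim\theta n^{\kappa}$. Using $g(a(s))=\mathrm{O}(a(s)^{2+p})$ as $s\to\infty$ (this is where $g\in\mathfrak{C}_{p,q}^{2}$ is critical) and $a(s)=\mathrm{O}(s^{-\alpha})$, the tail is of order $(N_{n}\Delta_{n})^{1-(2+p)\alpha}$; multiplying by $\sqrt{n\Delta_{n}}$ and translating $n\sim\Delta_{n}^{-\varpi}$ produces precisely the lower bound $\kappa>\frac{1}{\varpi}+\frac{\varpi-1}{2\varpi((2+p)\alpha-1)}$ of \eqref{eq:restriction_windows_thm}, while the upper bound $\kappa<\tfrac12(1+1/\varpi)$ is what controls the Riemann-sum error on the growing window $[0,N_{n}\Delta_{n}]$. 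Once this arithmetic is carried out, each $U^{n,i}$ is shown to be $o_{\mathbb{P}}(1)$ uniformly on $[0,T]$, which gives the $u.c.p$ convergence claimed.
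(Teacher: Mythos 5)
Your overall architecture (term-by-term bounds via Lemma \ref{lemma_error_term.}, the $\phi$-estimate \eqref{eq:assumption_ac_estimate}, $sa(s)\to0$, Cauchy--Schwarz for $U^{n,4}$, and the decomposition \eqref{eq:dec_Rn} for $U^{n,5}$) matches the paper, and your treatments of $U^{n,2}$, $U^{n,3}$, $U^{n,4}$ and of the tail in $U^{n,5}$ (including the derivation of the lower bound on $\kappa$) are essentially the paper's. The genuine gap is in $U^{n,1}$. Your claimed bound $\sqrt{n\Delta_{n}}\,\mathrm{O}_{\mathbb{P}}((n\Delta_{n})^{-1})+\sqrt{n\Delta_{n}}\,\mathrm{O}(\Delta_{n}^{2})$ silently assumes both that $\int_{0}^{m_{n}\Delta_{n}}a(s)^{v}\mathrm{d}s$ stays bounded (false when $v=p\wedge q$ is small, since $a(s)\asymp s^{-\alpha}$ makes this integral diverge like $(m_{n}\Delta_{n})^{1-v\alpha}$) and that the localized $|g''(\theta_{l})|$ costs nothing. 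Neither holds in the regime that actually matters for part 2 of Theorem \ref{CLT_Lambdan_hatLambdan}, where $p,q\geq0$ are arbitrary. Moreover, localizing so that $|g''(\theta_{l})|\leq|g''(a(l\Delta_{n}))|+o_{\mathbb{P}}(1)$ does not suffice: the additive constant multiplies $\Delta_{n}\sum_{l}|\hat a(l\Delta_{n})-a(l\Delta_{n})|^{2}=\mathrm{O}_{\mathbb{P}}(m_{n}\Delta_{n}/(n\Delta_{n}))$, and after the factor $\sqrt{n\Delta_{n}}$ this is $\sqrt{n\Delta_{n}}\,m_{n}/n$, which diverges when $m_{n}=n$. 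The paper instead uses $|g''(\theta_{l})|\lesssim a(l\Delta_{n})^{p\wedge q}+|\hat a(l\Delta_{n})-a(l\Delta_{n})|^{p\wedge q}$ and applies Lemma \ref{lemma_error_term.} with $r=2+p\wedge q$, obtaining \eqref{eq:error_Ubound_unbounded-1}; the term $\frac{m_{n}}{n}(n\Delta_{n})^{\frac{1-p\wedge q}{2}}$ is then harmless for $p,q>1$ but, when $0\leq p\wedge q<1$, forces $\sqrt{n\Delta_{n}}\,N_{n}/n\to0$.

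This is exactly where the upper bound $\kappa<\tfrac12(1+1/\varpi)$ in \eqref{eq:restriction_windows_thm} enters, and you have misassigned it: you attribute it to controlling the Riemann-sum error on $[0,N_{n}\Delta_{n}]$ inside $U^{n,5}$, but that error is $\mathrm{O}(\sqrt{n\Delta_{n}^{3}})$ uniformly in the window (the Lipschitz bound $|g(a(\tau_{n}(s)))-g(a(s))|\lesssim\Delta_{n}a(\tau_{n}(s))^{p+1}$ integrates to $\mathrm{O}(\Delta_{n})$ over any interval by integrability of $a^{p+1}$), so no condition on $\kappa$ is needed there. The upper bound on $\kappa$ is needed for the Taylor remainder $U^{n,1}$. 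As written, your argument proves the lemma only in the easy cases ($\Psi^{n}$, and $\Lambda^{n}$ with $p,q>1$) and leaves the $\bar\Lambda^{n}$ case with small $p\wedge q$ unproven.
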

		
		\begin{proof}We treat the terms $U^{n,i}$ separately.
			
			\textbf{Case $i=1$.}
			That $U_{t}^{n,1}$ is negligible when $c_{l}^{n}(t)=\mathbf{1}_{0\leq l\leq[t/\Delta_{n}]-1}$ follows exactly as in the proof of Theorems \ref{consitency_Lambdan} and \ref{consitency_Lambdan_bar}
			by replacing $m_{n}$ with $[T/\Delta_{n}]$ in \eqref{eq:error_Ubound_unbounded}.
			Now, if $g\in\mathfrak{C}_{p,q}^{2}$ and either $m_{n}=n$ or $m_{n}=N_{n}$,
			the same argument yields that, uniformly on $t\in[0,T]$,
			\begin{equation}
				\begin{aligned} U_{t}^{n,1}= \mathrm{O}_{\mathbb{P}}(\frac{m_{n}}{n}(n\Delta_{n})^{\frac{1-p\land q}{2}})+\mathrm{O}_{\mathbb{P}}(\int_{0}^{m_{n}\Delta_{n}}a(s)^{p\land q}\mathrm{d}s/\sqrt{n\Delta_{n}})+\mathrm{o}(1).
				\end{aligned} 
				\label{eq:error_Ubound_unbounded-1}
			\end{equation}
			Since $m_{n}/n$ is bounded, we have that $U^{n,1}\overset{u.c.p}{\longrightarrow}0$
			whenever $p,q>1$, or $p\land q=1$ and $m_{n}=N_n$. When $0\leq p\land q<1$, Assumption \ref{as:samplescheme} together with \eqref{eq:restriction_windows_thm} implies that $\sqrt{n\Delta_{n}}\frac{N_{n}}{n}\rightarrow0$. Indeed, if $\kappa\leq1/2$ the claim is obvious. If $1/2<\kappa<\frac{1}{2}(1+\frac{1}{\varpi})$, we obtain that
			\[
			\sqrt{n\Delta_{n}}\frac{N_{n}}{n}\leq C(n\Delta_{n}^{\frac{1}{1-2(1-\kappa)}})^{\frac{1-2(1-\kappa)}{2}}\rightarrow0.
			\]
			which again implies $U^{n,1}\overset{u.c.p}{\to}0$.
			
			\textbf{Case $i=2$.}
			Let $\tilde{m}_{n}=[T/\Delta_{n}]$ when $c_{l}^{n}(t)=\mathbf{1}_{0\leq l\leq[t/\Delta_{n}]-1}$
			and $\tilde{m}_{n}=m_{n}$ otherwise. Using \eqref{eq:assumption_ac_estimate}, we get that
			\[
			\sup_{t\in[0,T]}\rvert U_{t}^{n,2}\rvert\lesssim\sqrt{n\Delta_{n}^{3}}\int_{0}^{\tilde{m}_{n}\Delta_{n}}g^{\prime}(a(\tau_{n}(s)))\phi(s)\mathrm{d}s.
			\]
			Since $\sqrt{n\Delta_{n}^{3}}\rightarrow0,$ and $\phi$ is bounded (being c\`adl\`ag and vanishing at $+\infty$), it suffices to show that $\int_{0}^{m_{n}\Delta_{n}}g^{\prime}[a(\tau_{n}(s))]\mathrm{d}s$
			is bounded over $n$. If $\tilde{m}_{n}=[T/\Delta_{n}]$ this is immediate from the continuity of $g^{\prime}$. If
			$\tilde{m}_{n}=m_{n}$ and $g\in\mathfrak{C}_{p,q}^{2}$, Remark \ref{integrability_C2_pq}
			implies that 
			\begin{equation}
				\int_{0}^{m_{n}\Delta_{n}}\rvert g^{\prime}[a(\tau_{n}(s))]\rvert\mathrm{d}s\lesssim\int_{0}^{m_{n}\Delta_{n}}a(\tau_{n}(s))^{p+1}\mathrm{d}s\lesssim \Delta_{n}+\int_{0}^{\infty}a(s)^{p+1}\mathrm{d}s,\label{eq:int1_lemmaU}
			\end{equation}
			just as required.
			
			\textbf{Case $i=3$.}
			Since $xa(x)\rightarrow0$, as $x\rightarrow\infty$,
			\[
			\sup_{t\in[0,T]}\rvert U_{t}^{n,3}\rvert\lesssim_{a}\frac{1}{\sqrt{n\Delta_{n}}}\int_{0}^{\tilde{m}_{n}\Delta_{n}}g^{\prime}[a(\tau_{n}(s))]\mathrm{d}s\rightarrow0,
			\]
			due to \eqref{eq:int1_lemmaU}. 
			
			\textbf{Case $i=4$.}
			Using the same arguments as above for $i=4$ along with the the stationarity of $X$, we obtain $\sup_{t\in[0,T]}|U_t^{n,4}|
			\overset{\mathbb P}{\to}0.$

			\textbf{Case $i=5$.}
			We first consider the case $c_{l}^{n}(t)=\mathbf{1}_{0\leq l\leq[t/\Delta_{n}]-1}$. Since $g\in\mathcal{C}^{2}$ and $a$ is
			Lipschitz (thanks to Assumption \ref{as:trawl}), we deduce as in \eqref{eq:int1_lemmaU}
			\[	\rvert U_t^{n,5}\rvert\lesssim\sqrt{n\Delta_{n}}\int_{0}^{[t/\Delta_{n}]\Delta_{n}}\left|g(a(\tau_{n}(s)))-g(a(s))\right|\mathrm{d}s+\sqrt{n\Delta_{n}^{3}}\lesssim\sqrt{n\Delta_{n}^{3}}\rightarrow0.
			\]
			Assume now $c_l^n(t)=\mathbf 1_{[t/\Delta_n]\le l\le m_n-1}$  and let
			$g\in\mathfrak{C}_{p,q}^{2}$. Then,
			\[
			\rvert g(a(\tau_{n}(s)))-g(a(s))\rvert\lesssim \Delta_{n}a(\tau_{n}(s)),
			\]
			by the Mean-Value Theorem, the Lipschitz property of $a$, and the fact that $g'(x)=\mathrm{o}(|x|)$ as $x\to0$. Furthermore, from \eqref{eq:boundsforg_gprime} we also have that $\rvert g(a(s))\rvert\leq Ca(s)^{2+p}$. Thus, 
			\[
			\rvert\mathcal{R}_{t}^{n}\rvert\lesssim \Delta_n \int_{0}^{m_{n}\Delta_{n}}a(\tau_{n}(s))\mathrm{d}s+\int_{m_{n}\Delta_{n}}^{\infty}a(s)^{2+p}\mathrm{d}s+\Delta_{n}.
			\]
			Using this, Assumption \ref{as:trawl}, and \eqref{eq:int1_lemmaU} we obtain that, uniformly on $t\in[0,T]$,
			\[
			\sqrt{n\Delta_{n}}\rvert\mathcal{R}_{t}^{n}\rvert=\mathrm{o}(1)+\sqrt{n\Delta_{n}}(m_{n}\Delta_{n})^{1-\alpha(p+2)}.
			\]
			Therefore, if $m_{n}=n$,
			we have $\sqrt{n\Delta_{n}}(m_{n}\Delta_{n})^{1-\alpha(p+2)}=(n\Delta_{n})^{\frac{3}{2}-\alpha(p+2)}\rightarrow0$
			because $\alpha>1$. If instead $m_{n}=N_{n}$, Assumption
			\ref{as:samplescheme} and \eqref{eq:restriction_windows_thm},
			again imply
			\[
			\sqrt{n\Delta_{n}}(m_{n}\Delta_{n})^{1-\alpha(p+2)}\leq C(n\Delta_{n}^{\frac{2((2+p)\alpha-1)-1}{2\kappa((2+p)\alpha-1)-1}})^{-\frac{2\kappa((2+p)\alpha-1)-1}{2}}\rightarrow0,
			\]
			due to the fact that $\kappa>\frac{1}{\varpi}+\frac{\varpi-1}{2\varpi((2+p)\alpha-1)}>\frac{1}{2((2+p)\alpha-1)}.$
			This concludes the proof.\end{proof}
		
		The next step is to write $V^{n}$, defined in \eqref{eq:leading_V_1},
		as a sum of martingale differences plus an error term. Once this is achieved, the proof of Theorems \ref{CLT_Phin} and \ref{CLT_Lambdan_hatLambdan}
		will follow as an application of, for instance, Theorem 6.1 in \cite{HauslerLuschgy15}
		(c.f. Theorem IX 7.19 in \cite{JacShri02}). To do this, we use the decomposition (\ref{eq:second_decomp_mtgdiff}) and write
		\[
		V_{t}^{n}=V_{t}^{\prime n}+U_{t}^{\prime,n},
		\]
		where (see \eqref{eq:c_sumandsdef})
		\begin{equation}
			\begin{aligned}
				V_{t}^{\prime n}:=&\sqrt{\frac{\Delta_{n}}{n}}\sum_{l=0}^{n-1}\sum_{j=l}^{n-1}c_{l}^{n}(t)g^{\prime}(a(l\Delta_{n}))\varXi_{j,l,n};\\
				U_{t}^{\prime,n}:=&\sqrt{\frac{\Delta_{n}}{n}}\sum_{l=0}^{n-1}\sum_{j=l}^{n-1}c_{l}^{n}(t)g^{\prime}(a(l\Delta_{n}))R_{j,l,n}.
			\end{aligned}
			\label{eq:def_Vprime_Uprime}
		\end{equation}
		Note that $\varXi_{j,l,n}$ is $\mathscr{G}_{j}^{n}$-measurable (see
		\eqref{filtrationoverspace}) and, by construction, $\mathbb{E}(\varXi_{j,l,n}\rvert\mathscr{G}_{j-1}^{n})=0$,
		so $(\varXi_{j,l,n}:0\leq l\leq j)_{j\geq0}$ forms a martingale difference sequence with
		respect to $(\mathscr{G}_{j}^{n})_{j\ge0}$. The next lemma shows that
		$U^{\prime,n}$ is asymptotically negligible.
		
		\begin{lemma}\label{error2_CLT} Under the assumptions of Theorems
			\ref{CLT_Phin} and \ref{CLT_Lambdan_hatLambdan}, $U_{t}^{\prime,n}\overset{\mathbb{P}}{\rightarrow}0$. 
			
		\end{lemma}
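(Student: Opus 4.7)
The plan is to split $U^{\prime,n}_t=\sum_{\ell=1}^{3}U^{\prime,n,\ell}_t$ according to the three summands $\rho_{j,l,n}^{(\ell)}$ of $R_{j,l,n}$ in \eqref{eq:rest_negli}, and to bound $\mathbb{E}[(U^{\prime,n,\ell}_t)^2]$ directly. The strategy exploits several independence relations that can be read off from the partition $\{\mathcal{P}_A^n(i,j)\}$: the $\gamma_j$'s for different $j$ are mutually independent (disjoint time columns $t_{j+1}$); $\gamma_j$ is independent of $\tilde X_{(j-l)\Delta_n}$ (time indices $\leq j-l<j+1$); $\chi_j$ is independent of every $\beta_{j',l'}^{(3)}$ (height index $j\leq n-1$ vs.\ $\geq n+1$); and $\beta_{j,l}^{(3)}$ is independent of $\tilde L(\cup_{m=j+1}^{n}\mathcal{P}_A^n(j+1,m))$ (time indices $\leq j-l$ vs.\ $j+1$). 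Combined with Lemma \ref{moment_estimates_ahat-1} and direct computation of Lebesgue measures these yield the baseline estimates
\[
\mathbb{E}[\chi_j^2]\lesssim \Delta_n,\qquad \mathbb{E}[\gamma_j^2]\lesssim \Delta_n\, a((n-j)\Delta_n),\qquad \mathbb{E}[(\beta_{j,l}^{(3)})^2]\lesssim \int_{(n-j+l)\Delta_n}^{\infty}a(s)\,ds.
\]

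First I would treat $U^{\prime,n,2}$: swapping summation order and using the orthogonality of the $\gamma_j$'s together with their independence from $\tilde X$ gives
\[
\mathbb{E}\bigl[(U^{\prime,n,2}_t)^2\bigr]\lesssim \frac{\Delta_n}{n}\sum_{j=0}^{n-1}\mathbb{E}[\gamma_j^2]\,\mathbb{E}\Bigl[\Bigl(\sum_{l\leq j}c_l^n(t)\dot g(a(l\Delta_n))\tilde X_{(j-l)\Delta_n}\Bigr)^{2}\Bigr].
\]
Using stationarity of $X$ together with either the truncation $l\leq [T/\Delta_n]$ (in the $\Psi^n$ case of Theorem \ref{CLT_Phin}) or the decay $\lvert\dot g(a(s))\rvert\lesssim a(s)^{1+p}$ from Remark \ref{integrability_C2_pq} (in the $\Lambda^n$ and $\bar\Lambda^n$ cases), the inner expectation can be shown to be $\mathrm{O}(\Delta_n^{-2})$. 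Combined with $\sum_j a((n-j)\Delta_n)\Delta_n\lesssim \int_0^\infty a(s)\,ds$ this delivers $\mathbb{E}[(U^{\prime,n,2}_t)^2]=\mathrm{O}((n\Delta_n)^{-1})$. An essentially symmetric argument, interchanging the roles of $\chi_j$ and $\gamma_j$ and conditioning on the $\sigma$-algebra generated by the post-horizon height columns, handles $U^{\prime,n,1}$.

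The main obstacle is $U^{\prime,n,3}$: here $\rho_{j,l,n}^{(3)}$ is the product of two tail-type L\'evy basis terms which are independent of one another inside a fixed $(j,l)$ but not across $(j,l)$, so there is no immediate orthogonality to exploit. I would apply Cauchy-Schwarz on the double sum $\sum_{j,l,j',l'}$ combined with
\[
\mathbb{E}[\lvert\rho_{j,l,n}^{(3)}\rvert^{2}]\lesssim \mathbb{E}[(\beta_{j,l}^{(3)})^{2}]\cdot (n-j)\Delta_n,
\]
which again follows from Lemma \ref{moment_estimates_ahat-1}. Closing the resulting estimate requires coupling the decay of $\int_{(n-j+l)\Delta_n}^{\infty}a(s)\,ds$ against the slow growth of $(n-j)\Delta_n$ and the weights $\dot g(a(l\Delta_n))$; depending on the regime, this will rely either on the condition $p>1$ in $g\in\mathfrak{C}_{p,q}^{2}$ (for $\Lambda^n$) or on the lower bound on $\kappa$ in \eqref{eq:restriction_windows_thm} provided by Assumption \ref{as:samplescheme} (for $\bar\Lambda^n$), in each case forcing the final bound to vanish.
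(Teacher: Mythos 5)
Your handling of the $\rho^{(1)}$ and $\rho^{(2)}$ pieces is essentially the paper's argument: orthogonality across $j$ coming from the martingale--difference structure, the uniform bound on $\Delta_n\sum_{l}\rvert c_l^n(t)\dot g(a(l\Delta_n))\rvert$, and the second--moment estimates from Lemma \ref{moment_estimates_ahat-1}, which give a contribution of order $(n\Delta_n)^{-1}$ for $\ell=2$ and a Ces\`aro average of $\int_r^\infty a(s)\mathrm{d}s$ that vanishes by dominated convergence for $\ell=1$. The genuine gap is in your treatment of $\rho^{(3)}$. Your premise that there is ``no orthogonality to exploit'' across $(j,l)$ is false: the second factor $\tilde L(\cup_{m=j+1}^{n}\mathcal{P}_A^n(j+1,m))$ is supported on the time strip $t_j<s\leq t_{j+1}$, which is disjoint from the support of $\beta_{j,l}^{(3)}$ (time indices $\leq j-l$) and from the supports of all variables entering $\rho^{(3)}_{j',l',n}$ with $j'<j$. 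It is therefore centered and independent of all of them, so $\sum_l c_l^n(t)\dot g(a(l\Delta_n))\rho^{(3)}_{j,l,n}$ is a martingale difference in $j$ exactly as for $\ell=1,2$ -- this is precisely the observation the paper opens its proof with.

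Abandoning that orthogonality is fatal to your fallback. First, the bound $\mathbb{E}[\lvert\rho^{(3)}_{j,l,n}\rvert^2]\lesssim\mathbb{E}[(\beta^{(3)}_{j,l})^2]\,(n-j)\Delta_n$ is not the right estimate: $\mathrm{Leb}(\cup_{m=j+1}^{n}\mathcal{P}_A^n(j+1,m))\leq\int_0^{\Delta_n}a(u)\mathrm{d}u\lesssim\Delta_n$, so the correct (and much sharper) bound is $\lesssim\Delta_n\int_{(n-j)\Delta_n}^{\infty}a(s)\mathrm{d}s$. More importantly, even with the sharp bound, Cauchy--Schwarz over the full double sum squares the sum over $j$; under Assumption \ref{as:trawl} with $\int_r^\infty a(s)\mathrm{d}s\sim r^{1-\alpha}$ and $\alpha\in(1,3)$ this yields a bound of order $(n\Delta_n)^{2-\alpha}\Delta_n^{-1}$, which diverges. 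The orthogonality in $j$ is exactly what replaces the square of the $j$-sum by a single $j$-sum and produces the vanishing Ces\`aro average $\frac{1}{n\Delta_n}\int_0^{n\Delta_n}\int_r^\infty a(s)\mathrm{d}s\,\mathrm{d}r$. Finally, your suggestion that closing this term requires $p>1$ or the lower bound on $\kappa$ in \eqref{eq:restriction_windows_thm} is off the mark: the paper's estimate for $\ell=3$ uses only the integrability of $a$ and the uniform boundedness of the weight sums, which holds in all three cases.
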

		
		\begin{proof}As before, we may assume without loss of generality that $L$ has mean zero.
			Since each $\rho_{j,l,n}^{(\ell)}$ (see \eqref{eq:rest_negli}) is
			a martingale difference, we only need to show that for $\ell=1,2,3$
			\[
			\frac{1}{n\Delta_{n}}\mathcal{E}_{n}^{(\ell)}(t):=\frac{1}{n\Delta_{n}}\sum_{j=0}^{n}\mathbb{E}\left[\left(\Delta_{n}\sum_{l=0}^{j}c_{l}^{n}(t)g^{\prime}(a(l\Delta_{n}))\rho_{j,l,n}^{(\ell)}\right)^{2}\right]\rightarrow0.
			\]
			To see this, first note that the quantity $\Delta_{n}\sum_{l=0}^{j}\rvert c_{l}^{n}(t)g^{\prime}(a(l\Delta_{n}))\rvert$
			is uniformly bounded. Indeed, if $c_{l}^{n}(t)=\mathbf{1}_{0\leq l\leq[t/\Delta_{n}]-1}$, the continuity of $g'$ and $a$ implies
			\[
			\Delta_{n}\sum_{l=0}^{j}\rvert c_{l}^{n}(t)g^{\prime}(a(l\Delta_{n}))\rvert\leq\Delta_{n}\sum_{l=0}^{[T/\Delta_{n}]}\lvert g^{\prime}(a(l\Delta_{n}))\rvert\leq CT.
			\]
			If instead $c_{l}^{n}(t)=\mathbf{1}_{[t/\Delta_{n}]\leq l\leq n-1}$
			or $c_{l}^{n}(t)=\mathbf{1}_{[t/\Delta_{n}]\leq l\leq N_{n}-1}$ and
			$g\in\mathfrak{C}_{p,q}^{2}$, the same bound follows from
			\eqref{eq:int1_lemmaU}.	Hence, by Jensen's inequality, 
			\[
			\mathcal{E}_{n}^{(\ell)}(t)\lesssim\Delta_{n}\sum_{j=0}^{n}\sum_{l=0}^{j}\rvert c_{l}^{n}(t)g^{\prime}(a(l\Delta_{n}))\rvert\mathbb{E}[(\rho_{j,l,n}^{(\ell)})^{2}].
			\]
			Moreover, Lemma \ref{moment_estimates_ahat-1} yields
			\begin{align*}
				\mathbb{E}[(\rho_{j,l,n}^{(\ell)})^{2}] & \leq a(0)\Delta_{n}\int_{(n-j)\Delta_{n}}^{\infty}a(s)\mathrm{d}s;\,\ell=1,3;\\
				\mathbb{E}[(\rho_{j,l,n}^{(2)})^{2}] &\leq \mathbb{E}[X_{0}^{2}]\int_{j\Delta_{n}}^{(j+1)\Delta_{n}}a(n\Delta_{n}-s)\mathrm{d}s.
			\end{align*}
			Therefore,
			\[
			\frac{1}{n\Delta_{n}}\mathcal{E}_{n}^{(2)}(t)\lesssim\frac{1}{n\Delta_{n}}\sum_{j=0}^{n-1}\int_{j\Delta_{n}}^{(j+1)\Delta_{n}}a(n\Delta_{n}-s)\mathrm{d}s\leq\mathrm{Leb}(A)/n\Delta_{n}\rightarrow0,
			\]
			and, by the Dominated Convergence Theorem, 
			\[
			\frac{1}{n\Delta_{n}}\mathcal{E}_{n}^{(\ell)}(t)\leq C\frac{1}{\Delta_{n}n}\int_{0}^{n\Delta_{n}}\int_{r}^{\infty}a(s)\mathrm{d}s\mathrm{d}r\rightarrow0,\,\,\ell=1,3,
			\]
			as required.
		\end{proof}
		
		We are now ready to prove the CLTs for our statistics. In view of the
		previous two lemmas and the definition of $\tilde a$ (see
		(\ref{eq:def_a_tilde})), we may and do assume that
		$L$ has mean $0$. Using the decomposition $V_t^n = V_t^{\prime n} +
		U_t^{\prime,n}$, the proof reduces to applying a martingale central
		limit theorem to the leading term $V^{\prime n}$.
		\begin{proof}[Proof of Theorems  \ref{CLT_Phin} and \ref{CLT_Lambdan_hatLambdan}]
			We remind the reader that  $\mathcal{S}^{n}(g)$
			and $\mathcal{S}(g)$ denote any of the pairs $(\Psi^n(g),\Psi(g))$, $(\Lambda^n(g),\Lambda(g))$, or $(\bar{\Lambda}^n(g),\bar{\Lambda}(g))$. Recall also the decomposition
			\[
			\sqrt{n\Delta_{n}}\left(\mathcal{S}_{t}^{n}(g)-\mathcal{S}_{t}(g)\right)
			=
			V_t^n+\sum_{\ell=1}^{5}U_t^{n,\ell}.
			\]
			By Lemma \ref{error1_CLT} and Remark \ref{Rmk_cont_Z}, it is enough to show that $V^{ n}\rightarrow Z$ in $\mathcal D([0,T])$, where $Z$ is the limit process stated in the theorems. We proceed in three steps. First we establish tightness of $V^n$. Next we identify the finite-dimensional limits using the Cram\'er-Wold device. Finally we verify the martingale CLT conditions.
			
			\textbf{Step 1: Tightness.} From Lemma \ref{moment_estimates_ahat}, we deduce that for all $0\leq s\leq t\leq T$
			\[
			\mathbb{E}(\rvert V_{t}^{n}-V_{s}^{n}\rvert^{2})^{1/2}\leq C\Delta_{n}\sum_{l=[s/\Delta_{n}]}^{[t/\Delta_{n}]-1}\rvert g(a(l\Delta_{n}))\rvert\leq C\Delta_{n}([t/\Delta_{n}]-[s/\Delta_{n}]),
			\]
			Therefore, by Theorem 13.5 and equation
			(13.14) in  \cite{Billingsley99}, the sequence $V^{n}$ is tight on $\mathcal D([0,T])$.
			
			\textbf{Step 2: Cram\'er-Wold reduction.}
			Fix $M\in\mathbb{N}$, $0=t_{0}\leq t_{1}\leq\cdots\leq t_{M}\leq T$
			, and $z_{1},z_{2},\ldots,z_{M}\in\mathbb{R}$. By Lemma \ref{error2_CLT}, convergence of the finite-dimensional distributions of $V^n$ towards those of $Z$ reduces to showing $\sum_{k=1}^{M}z_{k}V_{t_{k}}^{\prime n}\overset{d}{\rightarrow}\sum_{k=1}^{M}z_{k}Z_{t_{k}}$, with $V^{\prime n}$ as in \eqref{eq:def_Vprime_Uprime}. Define 
			\begin{equation}
				\overline{c}_{l}^{n}=\sum_{k=1}^{M}z_{k}c_{l}^{n}(t_{k})g^{\prime}(a(l\Delta_{n})), \quad\zeta_{j}^{n}=\Delta_{n}\sum_{l=0}^{j}\overline{c}_{l}^{n}\varXi_{j,l,n},\label{eq:zi_def}
			\end{equation}
			in which $\varXi_{j,l,n}=\sum_{\ell=1}^{4}\xi_{j,l,n}^{(\ell)}$ (see
			\eqref{errorstpos}). Then
			\[
			\sum_{k=1}^{M}z_{k}V_{t_{k}}^{\prime n}=\frac{1}{\sqrt{n\Delta_{n}}}\sum_{j=0}^{n}\zeta_{j}^{n}.
			\]
			Since $\zeta_{j}^{n}$ is a $(\mathscr{G}_{j}^{n})$-martingale difference (see our discussion before Lemma \ref{error2_CLT}), the CLT of Theorem 6.1 in \cite{HauslerLuschgy15} (c.f. Theorem IX 7.19 in \cite{JacShri02}) applies provided that for some $r>2$ 
			\begin{align}
				1. \,\sigma^2_n:=\frac{1}{n\Delta_{n}}\sum_{j=1}^{n}\mathbb{E}[(\zeta_{j}^{n})^{2}\mid\mathscr{G}_{j-1}^{n}] & \overset{\mathbb{P}}{\rightarrow}\sum_{k,k^{\prime}}z_{k}z_{k^{\prime}}\mathbb{E}(Z_{t_{k}}Z_{t_{k^{\prime}}}),\label{asymptoticvar}\\
				2.\,\frac{1}{(n\Delta_{n})^{r/2}}\sum_{j=1}^{n}\mathbb{E}(\mid\zeta_{j}^{n}\mid^{r}) & \overset{\mathbb{P}}{\rightarrow}0.\label{linderbergfellercond}
			\end{align}
			
			\textbf{Step 3: Verification of \eqref{asymptoticvar} and \eqref{linderbergfellercond}.}
			
			\noindent{}	\textit{Asymptotic Variance:} In Lemmas \ref{lemmaavar_approx}
			and \ref{lemmaavar_limit} in Appendix \ref{Appendix1}, it is shown
			that
			
			\begin{equation}
				\sigma^2_n=  \Delta_{n}^{2}\sum_{l=0}^{n-1}\sum_{l^{\prime}=0}^{n-1}\overline{c}_{l}^{n}\overline{c}_{l^{\prime}}^{n}\left(\sum_{\ell,\ell^{\prime}=1}^{4}F_{n}^{(\ell,\ell^{\prime})}(t_{l},t_{l^{\prime}})\right)+\mathrm{o}_{\mathbb{P}}(1),
				\label{eq:eq:AVARapprox}
			\end{equation}
			where $F_{n}^{(\ell,\ell^{\prime})}$ are bounded functions vanishing outside $[0,n\Delta_{n}]$ and satisfying 
			\[ \sum_{\ell,\ell^{\prime}=1}^{4}F_{n}^{(\ell,\ell^{\prime})}(\tau_{n}(s),\tau_{n}(r))\rightarrow\Sigma_{a}(s,r), \text{ as } n\rightarrow\infty.\] 
			Thus, if $c_{l}^{n}(t)=\mathbf{1}_{[t/\Delta_{n}]\leq l\leq n-1}$, the continuity of $g^{\prime}$ immediately yields
			\[ 	\sigma^2_n\rightarrow \sum_{\ell,\ell^{\prime}=1}^{4}\sum_{q,q^{\prime}=1}^{M}z_{q}z_{q^{\prime}}\int_{0}^{t_{q}}\int_{0}^{t_{q^{\prime}}}g^{\prime}(a(s))g^{\prime}(a(r))\Sigma_{a}(s,r)\mathrm{d}r\mathrm{d}s,\]
			which coincides with \eqref{asymptoticvar}. If instead $c_{l}^{n}(t)=\mathbf{1}_{[t/\Delta_{n}]\leq l\leq n-1}$
			or $c_{l}^{n}(t)=\mathbf{1}_{[t/\Delta_{n}]\leq l\leq N_{n}-1}$ 
			\begin{equation}
				\sigma^2_n=\sum_{\ell,\ell^{\prime}=1}^{4}\sum_{q,q^{\prime}=1}^{M}z_{q}z_{q^{\prime}}\int_{t_{q}}^{m_{n}\Delta_{n}}\int_{t_{q^{\prime}}}^{m_{n}\Delta_{n}}g^{\prime}(a(\tau_{n}(s)))g^{\prime}(a(\tau_{n}(r)))F_{n}^{(\ell,\ell^{\prime})}(t_{l},t_{l^{\prime}})\mathrm{d}r\mathrm{d}s+\mathrm{o}_\mathbb{P}(1),\label{eq:AVAR_unbounded}
			\end{equation}
			Hence, to achieve \eqref{asymptoticvar}, it suffices to justify the use of the generalized dominated convergence theorem. To see this is the case, observe that from \eqref{eq:boundsforg_gprime}
			we have that 
			\begin{equation}
				\rvert g^{\prime}(a(\tau_{n}(s)))\rvert\lesssim\mathbf{1}_{s\leq\Delta_{n}}+a(s-\Delta_{n})\mathbf{1}_{s>\Delta_{n}},\label{eq:boundg_prime_GDCT}
			\end{equation}
			and the right-hand side of this converges pointwise to $a$ and is bounded. Hence the generalized dominated convergence applies.

			\noindent{}	\textit{Lyapunov's condition: }
			Arguing as in the proof of Lemma \ref{error2_CLT}, the quantity $\Delta_{n}\sum_{l=0}^{j}\rvert\overline{c}_{l}^{n}\rvert$ is uniformly bounded. Choose $2<r<\frac{r_{0}}{2}\land2\alpha$. Then Jensen's inequality, Lemma
			\ref{moment_estimates_chi's}, and Assumption \ref{as:trawl} imply that
			\[
			\mathbb{E}(\mid\zeta_{j}^{n,(\ell)}\mid^{r})\lesssim\Delta_{n}\sum_{l=0}^{j}\rvert\overline{c}_{l}^{n}\rvert\mathbb{E}(\mid\xi_{j,l,n}^{(\ell)}\mid^{r})\lesssim C\Delta_{n},
			\]
			Hence, condition \eqref{linderbergfellercond} holds concluding this the proof. \end{proof}

		%% The Appendices part is started with the command \appendix;
		%% appendix sections are then done as normal sections
		\appendix
		\appendixpage
		%dummy comment inserted by tex2lyx to ensure that this paragraph is not empty

		\section{Approximations of the Asymptotic Variance}
		\label{Appendix1}
		
		In this subsection we justify the approximation \eqref{eq:eq:AVARapprox}.
		Throughout the analysis we repeatedly use the moment estimates and
		identities from Lemma \ref{moment_estimates_ahat-1}, as well as the
		fact that the variance of a sum of martingale differences equals the
		sum of the individual variances. To avoid unnecessary repetition,
		these arguments will not be explicitly referenced each time they are
		applied. Recall that $m_n=n$ or $m_n=N_n$, depending on whether
		$c_l^n(t)=\mathbf{1}_{[t/\Delta_n]\le l\le n-1}$ or
		$c_l^n(t)=\mathbf{1}_{[t/\Delta_n]\le l\le N_n-1}$.
		
		\begin{lemma}\label{lemmaavar_approx}Let $\zeta_{j}^{n}=\sum_{\ell=1}^{4}\zeta_{j}^{n,(\ell)}$
			with $\zeta_{j}^{n,(\ell)}$ as in \eqref{eq:zi_def}. Under the assumptions
			of Theorems \ref{CLT_Phin} and \ref{CLT_Lambdan_hatLambdan}, it
			holds that 
			\[
			\frac{1}{n\Delta_{n}}\sum_{j=0}^{n}\mathbb{E}[(\zeta_{j}^{n})^{2}\mid\mathscr{G}_{j-1}^{n}]=\frac{1}{n\Delta_{n}}\sum_{j=0}^{n}\mathbb{E}(\zeta_{j,n}^{2})+\mathrm{o}_{\mathbb{P}}(1).
			\]
			
		\end{lemma}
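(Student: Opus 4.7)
The plan is to reduce the statement to showing, for each pair $(\ell,\ell') \in \{1,2,3,4\}^2$ separately, that
\[
D_n^{(\ell,\ell')} := \frac{\Delta_n}{n}\sum_{j=0}^{n}\sum_{l,l'=0}^{j}\overline{c}_l^n \overline{c}_{l'}^n \, W_{j,l,l'}^{(\ell,\ell')} \overset{\mathbb{P}}{\rightarrow} 0,
\]
where $W_{j,l,l'}^{(\ell,\ell')} := \mathbb{E}[\xi_{j,l,n}^{(\ell)}\xi_{j,l',n}^{(\ell')} \mid \mathscr{G}_{j-1}^n] - \mathbb{E}[\xi_{j,l,n}^{(\ell)}\xi_{j,l',n}^{(\ell')}]$, since expanding $(\zeta_j^n)^2$ into its sixteen cross-terms $\zeta_j^{n,(\ell)}\zeta_j^{n,(\ell')}$ and using bilinearity decomposes the claim accordingly.

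To obtain an explicit form for the $W_{j,l,l'}^{(\ell,\ell')}$, I would exploit that each of $\chi_j$, $\alpha_{k+1,j}$, $\beta_{k,j,l}^{(1)}$ and the ``column-$j$ portion'' of $\beta_{j,j,l}^{(2)}$ is an integral of $\tilde{L}$ over a subset of the column $\bigcup_{i\le j}\mathcal{P}_A^n(i,j)$, hence independent of $\mathscr{G}_{j-1}^n$, while the remaining factors ($\beta_{k,j-1,l}^{(2)}$, $\varsigma_{k,j}$, the pre-column-$j$ portion of $\beta_{j,j,l}^{(2)}$, and $\chi_k$ for $k \leq j-1$) are $\mathscr{G}_{j-1}^n$-measurable. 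Substituting these splittings into $\xi_{j,l,n}^{(\ell)}\xi_{j,l',n}^{(\ell')}$ and applying Lemma \ref{moment_estimates_ahat-1} cell by cell yields
\[
\mathbb{E}[\xi_{j,l,n}^{(\ell)}\xi_{j,l',n}^{(\ell')} \mid \mathscr{G}_{j-1}^n] = \mathbb{E}[\xi_{j,l,n}^{(\ell)}\xi_{j,l',n}^{(\ell')}] + W_{j,l,l'}^{(\ell,\ell')},
\]
in which $W_{j,l,l'}^{(\ell,\ell')}$ is a centered $\mathscr{G}_{j-1}^n$-measurable polynomial of degree at most two in the variables $\chi_k$, $\alpha_{k,m}$ and $\beta_{\cdot,\cdot,\cdot}^{(\cdot)}$ indexed by $m \leq j-1$.

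The final step is to bound $\mathbb{E}|D_n^{(\ell,\ell')}|^2$ by expanding into covariances $\mathbb{E}[W_{j,l,l'}^{(\ell,\ell')} W_{j',m,m'}^{(\ell,\ell')}]$. Because every non-trivial factor of $W_{j,l,l'}^{(\ell,\ell')}$ scales like $\mathrm{Leb}(\mathcal{P}_A^n(\cdot,\cdot)) \lesssim \Delta_n\, a(\cdot\Delta_n)$, one readily obtains $\mathbb{E}[|W_{j,l,l'}^{(\ell,\ell')}|^2] \lesssim \Delta_n^2$ uniformly; and for $j \neq j'$ only variables in $\mathscr{G}_{(j\wedge j')-1}^n$ survive the covariance computation, producing a cross-term that decays in $|j-j'|$ at a rate controlled by $a$. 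Combined with the uniform bound $\Delta_n \sum_{l=0}^{j}|\overline{c}_l^n|\leq C$ established in the proof of Lemma \ref{error2_CLT} and the integrability of $a$ granted by Assumption \ref{as:trawl}, one then obtains $\mathbb{E}|D_n^{(\ell,\ell')}|^2 = O((n\Delta_n)^{-1}) = o(1)$.

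The main obstacle will be the diagonal case $\ell = \ell' = 4$ (and the mixed pairs involving $\xi^{(4)}$), where $\xi_{j,l,n}^{(4)}$ itself already contains an inner sum over $k \in \{l,\ldots,j-1\}$ together with the $\mathscr{G}_{j-1}^n$-measurable random sum $\varsigma_{k,j}$, so that $W_{j,l,l'}^{(4,4)}$ involves quadruple sums whose variance estimates require a careful exploitation of the disjointness of the cells $\mathcal{P}_A^n(i,m)$ when matching the indices $(i,m,k,\ldots)$; the moment estimates of Lemma \ref{moment_estimates_chi's} together with the monotonicity and integrability of $a$ should be just enough to close this combinatorial bookkeeping.
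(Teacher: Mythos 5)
Your overall strategy -- expand $(\zeta_j^n)^2$ into the sixteen cross terms, compute $\mathbb{E}[\xi^{(\ell)}_{j,l,n}\xi^{(\ell')}_{j,l',n}\mid\mathscr{G}_{j-1}^n]$ by splitting each factor into its column-$j$ part (independent of $\mathscr{G}_{j-1}^n$) and its $\mathscr{G}_{j-1}^n$-measurable part, and then bound the second moment of the centered remainder -- is exactly the route the paper takes. However, there is a concrete quantitative error that breaks your closing argument. The claim that $\mathbb{E}\bigl[|W_{j,l,l'}^{(\ell,\ell')}|^2\bigr]\lesssim\Delta_n^2$ uniformly is false for $\ell=\ell'=2$ (and for $\ell=\ell'=4$ and several mixed pairs). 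There, $W_{j,l,l'}^{(2,2)}=\sum_{k,k'}[\chi_k\chi_{k'}-\mathbb{E}(\chi_k\chi_{k'})]\,\mathbb{E}[\beta^{(1)}_{k,j,l}\beta^{(1)}_{k',j,l'}]$ is a double sum over up to $j^2$ indices; using the independence of the $\chi_k$ across columns and the bound $\mathbb{E}[\beta^{(1)}_{k,j,l}\beta^{(1)}_{k',j,l'}]\approx\Delta_n a(t_j-t_{(k-l)\wedge(k'-l')})$ one gets $\mathbb{E}[|W_{j,l,l'}^{(2,2)}|^2]\lesssim n\Delta_n^{3}$, larger than your claimed bound by a factor of $n\Delta_n\to\infty$. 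With the correct per-$j$ bound your covariance bookkeeping no longer yields $\mathrm{O}((n\Delta_n)^{-1})$.

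The fix -- and this is the essential idea of the paper's proof that your proposal does not contain -- is to interchange the order of summation \emph{before} estimating: sum over $j$ first, so that $\sum_j W_{j,l,l'}^{(2,2)}$ collapses into $\sum_{k,k'}[\chi_k\chi_{k'}-\mathbb{E}(\chi_k\chi_{k'})]\,\mathrm{Leb}(\cup_i\cup_{j\ge k\vee k'+1}\mathcal{P}_A^n(i,j))$, a sum of martingale differences in $k$ with deterministic coefficients bounded by $\int_{t_k}^{\infty}a$. Its variance is then of order $n\Delta_n\,\upsilon_n$ with $\upsilon_n=C_\alpha+(n\Delta_n)^{1-2(\alpha-1)}$, which for $1<\alpha<3/2$ \emph{diverges} -- just more slowly than $n\Delta_n$ -- so the final rate is $\upsilon_n/(n\Delta_n)\to0$, not $(n\Delta_n)^{-1}$. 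Two further points you gloss over: (i) your statement that for $j\neq j'$ ``only variables in $\mathscr{G}_{(j\wedge j')-1}^n$ survive'' is not a usable orthogonality, since $W_{j'}$ is not conditionally centered given $\mathscr{G}_{j-1}^n$ (it contains terms like $\chi_k^2-\mathbb{E}\chi_k^2$ for $k<j$); the cross-$j$ cancellation comes entirely from the coefficient structure after the summation interchange. (ii) For the pairs $(1,2)$ and $(1,4)$ the paper cannot obtain the $L^2$ bound at all for a boundary block of indices and has to fall back on an $L^1$ estimate of order $\Delta_n(l\vee l')$; some analogue of this would be needed in your write-up as well. As it stands, the proposal identifies the right decomposition but the stated estimates would not close the proof.
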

		
		\begin{proof}Plainly 
			\[
			\frac{1}{n\Delta_{n}}\sum_{j=0}^{n}\mathbb{E}[(\zeta_{j}^{n})^{2}\mid\mathscr{G}_{j-1}^{n}]=\sum_{\ell,\ell^{\prime}=1}^{4}\Delta_{n}^{2}\sum_{l=0}^{n-1}\sum_{l^{\prime}=0}^{n-1}\overline{c}_{l}^{n}\overline{c}_{l^{\prime}}^{n}\frac{1}{n\Delta_{n}}\sum_{j=l\lor l^{\prime}}^{n}\mathbb{E}(\xi_{j,l,n}^{(\ell)}\xi_{j,l^{\prime},n}^{(\ell^{\prime})}\mid\mathscr{G}_{j-1}^{n}).
			\]
			By the independent scattered property of $L$ we have that 
			\begin{equation}
				\begin{aligned}\mathbb{E}(\xi_{j,l,n}^{(1)}\xi_{j,l^{\prime},n}^{(1)}\mid\mathscr{G}_{j-1}^{n})= & \mathbb{E}(\chi_{j}^{2}\beta_{j,j,l}^{(1)}\beta_{j,j,l^{\prime}}^{(1)})+\beta_{j,j-1,l^{\prime}}^{(2)}\mathbb{E}(\chi_{j}^{2}\beta_{j,j,l}^{(1)})\\
					& +\beta_{j,j-1,l}^{(2)}\mathbb{E}(\chi_{j}^{2}\beta_{j,j,l^{\prime}}^{(1)})+\beta_{j,j-1,l}^{(2)}\beta_{j,j-1,l^{\prime}}^{(2)}\mathbb{E}(\chi_{j}^{2})\\
					& -\mathbb{E}(\chi_{j}\beta_{j,j,l}^{(2)})\mathbb{E}(\chi_{j}\beta_{j,j,l^{\prime}}^{(2)});\\
					\mathbb{E}(\xi_{j,l,n}^{(2)}\xi_{j,l^{\prime},n}^{(2)}\mid\mathscr{G}_{j-1}^{n})= & \sum_{k=l}^{j-1}\sum_{k^{\prime}=l^{\prime}}^{j-1}\chi_{k}\chi_{k^{\prime}}\mathbb{E}[\beta_{k,j,l}^{(1)}\beta_{k^{\prime},j,l^{\prime}}^{(1)}];\\
					\mathbb{E}(\xi_{j,l,n}^{(3)}\xi_{j,l^{\prime},n}^{(3)}\mid\mathscr{G}_{j-1}^{n})= & \sum_{k=l\lor l^{\prime}}^{j-1}\beta_{k,j-1,l}^{(2)}\beta_{k,j-1,l^{\prime}}^{(2)}\mathbb{E}[\alpha_{k+1,j}^{2}];\\
					\mathbb{E}(\xi_{j,l,n}^{(4)}\xi_{j,l^{\prime},n}^{(4)}\mid\mathscr{G}_{j-1}^{n})= & \sum_{k=l\lor l^{\prime}}^{j-1}\mathbb{E}(\alpha_{k+1,j}^{2})\mathbb{E}(\beta_{k,j,l}^{(1)}\beta_{k,j,l^{\prime}}^{(1)})\\
					& +\sum_{k=l}^{j-1}\sum_{k^{\prime}=l^{\prime}}^{j-1}\varsigma_{k^{\prime},j}\varsigma_{k,j}\mathbb{E}(\beta_{k,j,l}^{(1)}\beta_{k^{\prime},j,l^{\prime}}^{(1)}),
				\end{aligned}
				\label{eq:cond_variance}
			\end{equation}
			where $\varsigma_{k,j}:=\sum_{m=k+1}^{j-1}\alpha_{k+1,m}$. Furthermore,
			\begin{equation}
				\begin{aligned}\mathbb{E}(\xi_{j,l,n}^{(1)}\xi_{j,l^{\prime},n}^{(2)}\mid\mathscr{G}_{j-1}^{n})= & \sum_{k=l^{\prime}}^{j-1}\chi_{k}\beta_{j,j-1,l}^{(2)}\mathbb{E}[(\beta_{k,j,l^{\prime}}^{(1)})^{2}]\\
					& +\sum_{k=l^{\prime}}^{j-1}\chi_{k}\mathbb{E}(\beta_{k,j,l^{\prime}}^{(1)}\beta_{j,j,l}^{(1)}\chi_{j});\\
					-\mathbb{E}(\xi_{j,l,n}^{(1)}\xi_{j,l^{\prime},n}^{(3)}\mid\mathscr{G}_{j-1}^{n})= & \sum_{k=l^{\prime}}^{j-1}\beta_{j,j-1,l}^{(2)}\beta_{k,j-1,l^{\prime}}^{(2)}\mathbb{E}(\alpha_{k+1,j}\chi_{j})\\
					& +\sum_{k=l^{\prime}}^{j-1}\beta_{k,j-1,l^{\prime}}^{(2)}\mathbb{E}(\alpha_{k+1,j}\chi_{j}\beta_{j,j,l}^{(1)});\\
					-\mathbb{E}(\xi_{j,l,n}^{(1)}\xi_{j,l^{\prime},n}^{(4)}\mid\mathscr{G}_{j-1}^{n})= & \sum_{k=l^{\prime}}^{j-1}\mathbb{E}(\beta_{j,j,l}^{(1)}\chi_{j}\alpha_{k+1,j}\beta_{k,j,l^{\prime}}^{(1)})\\
					& +\sum_{k=l^{\prime}}^{j-1}\varsigma_{k,j}\mathbb{E}(\beta_{j,j,l}^{(1)}\chi_{j}\beta_{k,j,l^{\prime}}^{(1)})\\
					& +\sum_{k=l^{\prime}}^{j-1}\varsigma_{k,j}\beta_{j,j-1,l}^{(2)}\mathbb{E}(\chi_{j}\beta_{k,j,l^{\prime}}^{(1)});
				\end{aligned}
				\label{eq:cond_cov}
			\end{equation}
			as well as 
			\begin{equation}
				\begin{aligned}-\mathbb{E}(\xi_{j,l,n}^{(2)}\xi_{j,l^{\prime},n}^{(3)}\mid\mathscr{G}_{j-1}^{n})= & \sum_{k=l}^{j-1}\sum_{k^{\prime}=l^{\prime}}^{j-1}\chi_{k}\beta_{k^{\prime},j-1,l^{\prime}}^{(2)}\mathbb{E}[\alpha_{k^{\prime}+1,j}^{2}]\mathbf{1}_{k^{\prime}+1\leq k-l};\\
					-\mathbb{E}(\xi_{j,l,n}^{(2)}\xi_{j,l^{\prime},n}^{(4)}\mid\mathscr{G}_{j-1}^{n})= & \sum_{k=l}^{j-1}\sum_{k^{\prime}=l^{\prime}}^{j-1}\chi_{k}\varsigma_{k^{\prime},j}\mathbb{E}[\beta_{k,j,l}^{(1)}\beta_{k^{\prime},j,l^{\prime}}^{(1)}];\\
					\mathbb{E}(\xi_{j,l,n}^{(3)}\xi_{j,l^{\prime},n}^{(4)}\mid\mathscr{G}_{j-1}^{n})= & \sum_{k=l}^{j-1}\sum_{k^{\prime}=l^{\prime}}^{j-1}\beta_{k,j-1,l}^{(2)}\varsigma_{k^{\prime},j}\mathbb{E}[\alpha_{k+1,j}\beta_{k^{\prime},j,l^{\prime}}^{(1)}]\mathbf{1}_{k+1\leq k^{\prime}-l^{\prime}}.
				\end{aligned}
				\label{eq:cond_cov2}
			\end{equation}
			We now separately analyse the terms 
			\begin{equation}
				\mathcal{V}_{n}^{(\ell,\ell^{\prime})}:=\Delta_{n}^{2}\sum_{l=0}^{n-1}\sum_{l^{\prime}=0}^{n-1}\overline{c}_{l}^{n}\overline{c}_{l^{\prime}}^{n}\frac{1}{n\Delta_{n}}\sum_{j=l\lor l^{\prime}}^{n}\mathbb{E}(\xi_{j,l,n}^{(\ell)}\xi_{j,l^{\prime},n}^{(\ell^{\prime})}\mid\mathscr{G}_{j-1}^{n}).\label{eq:varcov_def}
			\end{equation}
			We repeatedly use that $\Delta_{n}\sum_{l=0}^{n-1}\rvert\overline{c}_{l}^{n}\rvert$
			is uniformly bounded, as shown in the proof of Lemma \ref{error2_CLT}. Our aim is to verify that
			\begin{equation}
				\mathcal{V}_{n}^{(\ell,\ell^{\prime})}=\mathbb{E}(\mathcal{V}_{n}^{(\ell,\ell^{\prime})})+\Delta_{n}^{2}\sum_{l=1}^{n-1}\sum_{l^{\prime}=0}^{n-1}\overline{c}_{l}^{n}\overline{c}_{l^{\prime}}^{n}\frac{\mathcal{S}_{l,l^{\prime}}^{n}}{n\Delta_{n}},\label{eq:decomp_sll_AVAR}
			\end{equation}
			where $\mathcal S_{l,l'}^n$ can be written as 
			\[
			\mathcal S_{l,l'}^n=\sum_{\iota=1}^{N}\mathcal S_{l,l'}^{n,\iota},
			\qquad N\in\mathbb N \text{ fixed},
			\] and each summand satisfies either
			\begin{equation}
				\mathbb{E}(\rvert\mathcal{S}_{l,l^{\prime}}^{n,\iota}\rvert^{2})\lesssim n\Delta_{n}\upsilon_{n}, \quad\upsilon_{n}=\mathrm{o}(n\Delta_{n}),\label{eq:bound1_neglig_AVAR}
			\end{equation}
			uniformly on $l,l^{\prime}$, or
			\begin{equation}
				\mathbb{E}(\rvert\mathcal{S}_{l,l^{\prime}}^{n,\iota}\rvert^{2})\lesssim n\Delta_{n}[\Delta_{n}(l\lor l^{\prime})+C],\label{eq:bound2_neglig_AVAR}
			\end{equation}
			for some constant $C\geq0$. These bounds are sufficient for our purposes. Indeed, by Jensen's inequality
			\[
			\mathbb{E}(\rvert\mathcal{V}_{n}^{(\ell,\ell^{\prime})}-\mathbb{E}(\mathcal{V}_{n}^{(\ell,\ell^{\prime})})\rvert^{2})\lesssim\sum_{\iota=1}^{\mathrm{N}}\Delta_{n}^{2}\sum_{l=0}^{n-1}\sum_{l^{\prime}=0}^{n-1}\rvert\overline{c}_{l}^{n}\rvert\rvert\overline{c}_{l^{\prime}}^{n}\rvert\frac{1}{(n\Delta_{n})^{2}}\mathbb{E}(\rvert\mathcal{S}_{l,l^{\prime}}^{n,\iota}\rvert^{2}).
			\]
			Therefore, for all $\iota=1,\ldots,\mathrm{N}$, either
			\[
			\Delta_{n}^{2}\sum_{l=0}^{n-1}\sum_{l^{\prime}=0}^{n-1}\rvert\overline{c}_{l}^{n}\rvert\rvert\overline{c}_{l^{\prime}}^{n}\rvert\frac{1}{(n\Delta_{n})^{2}}\mathbb{E}(\rvert\mathcal{S}_{l,l^{\prime}}^{n,\iota}\rvert^{2})\lesssim\upsilon_{n}/n\Delta_{n}\rightarrow0,\,\,n\rightarrow\infty,
			\]
			or 
			\[
			\Delta_{n}^{2}\sum_{l=0}^{n-1}\sum_{l^{\prime}=0}^{n-1}\rvert\overline{c}_{l}^{n}\rvert\rvert\overline{c}_{l^{\prime}}^{n}\rvert\frac{1}{(n\Delta_{n})^{2}}\mathbb{E}(\rvert\mathcal{S}_{l,l^{\prime}}^{n,\iota}\rvert^{2})\lesssim\frac{1}{n}\sum_{l=0}^{n-1}\rvert\overline{c}_{l}^{n}\rvert t_{l}+1/(n\Delta_{n}).
			\]
			In the latter case,  as $n\rightarrow\infty$
			\begin{enumerate}
				\item If $c_{l}^{n}(t)=\mathbf{1}_{0\leq l\leq[t/\Delta_{n}]-1},$ by the
				continuity of $g^{\prime}$ 
				\[
				\frac{1}{n}\sum_{l=0}^{n-1}\rvert\overline{c}_{l}^{n}\rvert t_{l}\lesssim T/n\Delta_{n}\rightarrow0.
				\]
				\item Otherwise, arguing as in \eqref{eq:int1_lemmaU}, we deduce that 
				\[
				\frac{1}{n}\sum_{l=0}^{n-1}\rvert\overline{c}_{l}^{n}\rvert t_{l}\lesssim\mathrm{O}(1/(n\Delta_{n}))+\frac{m_{n}}{n}\times\frac{1}{m_{n}\Delta_{n}}\int_{0}^{m_{n}\Delta_{n}}a(s)^{p+1}s\mathrm{d}s\rightarrow0,
				\]
				thanks to Assumption \ref{as:trawl} and \eqref{eq:window_assumption}. 
			\end{enumerate}
			Therefore, in any situation, it holds that $\mathbb{E}(\rvert\mathcal{V}_{n}^{(\ell,\ell^{\prime})}-\mathbb{E}(\mathcal{V}_{n}^{(\ell,\ell^{\prime})})\rvert^{2})\rightarrow0$,
			as $n\rightarrow\infty$, which is exactly the conclusion of this
			lemma. In what is left of the proof, for each $\mathcal{V}_{n}^{(\ell,\ell^{\prime})}$ we identify a decomposition of the form \eqref{eq:decomp_sll_AVAR}. Then, we verify that the corresponding summands satisfy either \eqref{eq:bound1_neglig_AVAR} or \eqref{eq:bound2_neglig_AVAR}.
			
			For convenience, when $\ell=\ell'$ we refer to $\mathcal V_n^{(\ell,\ell')}$ as the conditional variance, and otherwise as the conditional covariance. We also stress that the notation $\mathcal S_{l,l'}^n$ will be reused from case to case: only the decomposition
			$\mathcal S_{l,l'}^n=\sum_{\iota=1}^{N}\mathcal S_{l,l'}^{n,\iota}$
			is common, while the precise definition of $\mathcal S_{l,l'}^{n,\iota}$ depends on the term under consideration.
			
			\subsubsection*{Conditional Variances}
			\begin{description}
				\item [{$\ell=1:$}] Set $\mathfrak{K}_{3}=\int x^{3}\nu(dx)$ and write
				\begin{align*}
					\mathcal{S}_{l,l^{\prime}}^{n}:= & \mathfrak{K}_{3}\int_{t_{l}}^{t_{l+1}}a(s)\mathrm{d}s\sum_{j=l\lor l^{\prime}}^{n-1}\beta_{j,j-1,l^{\prime}}^{(2)} +\mathfrak{K}_{3}\int_{t_{l}}^{t_{l+1}}a(s)\mathrm{d}s\sum_{j=l\lor l^{\prime}}^{n-1}\beta_{j,j-1,l}^{(2)}\\
					& +\int_{0}^{\Delta_{n}}a(s)\mathrm{d}s\sum_{j=l\lor l^{\prime}}^{n-1}[\beta_{j,j-1,l}^{(2)}\beta_{j,j-1,l^{\prime}}^{(2)}-\mathbb{E}(\beta_{j,j-1,l}^{(2)}\beta_{j,j-1,l^{\prime}}^{(2)})]
					=:  \sum_{\iota=1}^{3}\mathcal{S}_{l,l^{\prime}}^{n,\iota}.
				\end{align*}
				Since the sequence $\beta_{j,j-1,l^{\prime}}^{(2)}$
				is $l^{\prime}$-dependent and $\mathbb{E}[(\beta_{j,j-1,l^{\prime}}^{(2)})^{2}]\leq\mathrm{Leb}(A),$
				it holds that
				\[
				\mathbb{E}\left[\left(\sum_{j=l\lor l^{\prime}}^{n-1}\beta_{j,j-1,l^{\prime}}^{(2)}\right)^{2}\right]\leq Cn(l^{\prime}+1).
				\]
				Hence, $\mathbb{E}(\rvert\mathcal{S}_{l,l^{\prime}}^{n,1}\rvert^{2})\lesssim n\Delta_{n}(l^{\prime}\Delta_{n}+C).$
				The same argument applies to $\mathcal S_{l,l'}^{n,2}$. Moreover, since $\beta_{j,j-1,l}^{(2)}\beta_{j,j-1,l^{\prime}}^{(2)}$
				is $l\lor l^{\prime}$-dependent, the same bound also holds for
				$\mathcal S_{l,l'}^{n,3}$. This is exactly \eqref{eq:bound2_neglig_AVAR}.
				\item [{$\ell=2:$}] By interchanging the order of summation we may write
				\[
				\sum_{j=l\lor l^{\prime}}^{n}\mathbb{E}(\xi_{j,l,n}^{(2)}\xi_{j,l^{\prime},n}^{(2)}\mid\mathscr{G}_{j-1}^{n})=\sum_{k=l,k^{\prime}=l^{\prime}}^{n-1}\chi_{k}\chi_{k^{\prime}}\mathrm{Leb}(\cup_{i=0}^{(k-l)\land(k^{\prime}-l^{\prime})}\cup_{j=(k\lor k^{\prime})+1}^{n-1}\mathcal{P}_{A}^{n}(i,j)).
				\]
				Thus, (\ref{eq:decomp_sll_AVAR}) holds with 
				\[
				\mathcal{S}_{l,l^{\prime}}^{n}:= \sum_{k=l\lor l^{\prime}}^{n-1}[\chi_{k}^{2}-\mathbb{E}(\chi_{k}^{2})]b_{k,l,l^{\prime}}+\sum_{k=l}^{n-1}\chi_{k}\nu_{k,l^{\prime},l} +\sum_{k^{\prime}=l^{\prime}}^{n-1}\chi_{k^{\prime}}\tilde{\nu}_{k^{\prime},l,l^{\prime}}
				=: \sum_{\iota=1}^{3}\mathcal{S}_{l,l^{\prime}}^{n,\iota},
				\]
				where 
				\begin{align*}
					b_{k,l,l^{\prime}}:= & \mathrm{Leb}(\cup_{i=0}^{(k-l)\land(k-l^{\prime})}\cup_{j=k+1}^{n-1}\mathcal{P}_{A}^{n}(i,j));\\
					\nu_{k,l,l^{\prime}}:= & \sum_{k^{\prime}=l^{\prime}}^{n-1}\chi_{k^{\prime}}\mathrm{Leb}(\cup_{i=0}^{(k-l)\land(k^{\prime}-l^{\prime})}\cup_{j=k+1}^{n-1}\mathcal{P}_{A}^{n}(i,j))\mathbf{1}_{k^{\prime}\leq k-1};\\
					\tilde{\nu}_{k^{\prime},l,l^{\prime}}:= & \sum_{k=l}^{n-1}\chi_{k}\mathrm{Leb}(\cup_{i=0}^{(k-l)\land(k^{\prime}-l^{\prime})}\cup_{j=k^{\prime}+1}^{n-1}\mathcal{P}_{A}^{n}(i,j))\mathbf{1}_{k\leq k^{\prime}-1}
				\end{align*}
				Note that $b_{k,l,l^{\prime}}\leq\mathrm{Leb}(A)$ and $\mathbb{E}(\rvert\chi_{k}^{2}-\mathbb{E}(\chi_{k}^{2})\rvert^{2})\lesssim\Delta_{n}$. Hence, 
				\[
				\mathbb{E}(\rvert\mathcal{S}_{l,l^{\prime}}^{n,1}\rvert^{2})=\sum_{k=l}^{n-1}\mathbb{E}(\rvert\chi_{k}^{2}-\mathbb{E}(\chi_{k}^{2})\rvert^{2})b_{k,l,l^{\prime}}^{2}\leq\mathrm{Leb}(A)^{2}n\Delta_{n}.
				\]
				Furthermore
				\begin{align*}
					\mathbb{E}(\rvert\chi_{k}\nu_{k,l^{\prime},l}\rvert^{2})\lesssim & \Delta_{n}^{2}\sum_{k^{\prime}=l^{\prime}}^{n-1}\mathrm{Leb}(\cup_{i=0}^{k^{\prime}-l^{\prime}}\cup_{j\geq k}\mathcal{P}_{A}^{n}(i,j))^{2}\mathbf{1}_{k^{\prime}\leq k}\\
					\lesssim & \Delta_{n}\int_{0}^{n\Delta_{n}}\left(\int_{s}^{\infty}a(u)\mathrm{d}u\right)^{2}\mathrm{d}s
					\lesssim  \Delta_{n}(C_{\alpha}+(n\Delta_{n})^{1-2(\alpha-1)}),
				\end{align*}
				where in the last inequality we also applied Assumption \ref{as:trawl}.
				The same bound holds for $\mathbb{E}(\rvert\chi_{k^{\prime}}\tilde{\nu}_{k,l,l^{\prime}}\rvert^{2})$.
				In view that $\chi_{k}\nu_{k,l^{\prime},l}$
				and $\chi_{k^{\prime}}\tilde{\nu}_{k,l,l^{\prime}}$ are martingale
				differences, it follows that 
				\[
				\mathbb{E}(\rvert\mathcal{S}_{l,l^{\prime}}^{n,\iota}\rvert^{2})\lesssim n\Delta_{n}(C_{\alpha}+(n\Delta_{n})^{1-2(\alpha-1)}), \quad \iota=2,3.
				\]
				Therefore \eqref{eq:bound1_neglig_AVAR} is satisfied
				with $\upsilon_{n}=C_{\alpha}+(n\Delta_{n})^{1-2(\alpha-1)}.$
				\item [{$\ell=3:$}] By symmetry we may write 
				\[
				\mathcal{V}_{n}^{(3,3)}  =\mathbb{E}(\mathcal{V}_{n}^{(3,3)})+\Delta_{n}^{2}\sum_{l=0}^{n-1}(\overline{c}_{l}^{n})^{2}\frac{\mathcal{S}_{l,l}^{n}}{n\Delta_{n}}
				+2\Delta_{n}^{2}\sum_{l=1}^{n-1}\sum_{l^{\prime}=0}^{l-1}\overline{c}_{l}^{n}\overline{c}_{l^{\prime}}^{n}\frac{\mathcal{S}_{l,l^{\prime}}^{n}}{n\Delta_{n}},
				\]
				where
				\[
				\mathcal{S}_{l,l^{\prime}}^{n}:=\sum_{j=l}^{n}\sum_{k=l}^{j-1}[\beta_{k,j-1,l}^{(2)}\beta_{k,j-1,l^{\prime}}^{(2)}-\mathbb{E}(\beta_{k,j-1,l}^{(2)}\beta_{k,j-1,l^{\prime}}^{(2)})]\mathbb{E}[\alpha_{k+1,j}^{2}].
				\]
				
				Using
				\begin{equation}
					\beta_{k,j-1,l}^{(2)}=\sum_{m=k-l}^{j-1}\beta_{k,m,l}^{(1)},\label{eq:beta2_as_sumbeta1}
				\end{equation}
				and reordering the sums yields the decomposition $\mathcal{S}_{l,l^{\prime}}^{n}=\sum_{\imath=1}^{7}\mathcal{S}_{l,l^{\prime}}^{n,\imath}$, in which
				\begin{align*}
					\mathcal{S}_{l,l^{\prime}}^{n,1}:= & \sum_{m=0}^{n-1}\sum_{k=l\lor m+1}^{(n-1)\land(m+l^{\prime})}[\beta_{k,m,l}^{(1)}\beta_{k,m,l^{\prime}}^{(1)}-\mathbb{E}(\beta_{k,m,l}^{(1)}\beta_{k,m,l^{\prime}}^{(1)})]\sum_{j=k+1}^{n}\mathbb{E}[\alpha_{k+1,j}^{2}]\\
					\mathcal{S}_{l,l^{\prime}}^{n,2}:= & \sum_{m=l-l^{\prime}}^{n-1}\sum_{m^{\prime}=(l-l^{\prime})\lor(m-l^{\prime})+1}^{m-1}\sum_{k=l\lor m+1}^{(n-1)\land(m^{\prime}+l^{\prime})}\beta_{k,m,l}^{(1)}\beta_{k,m^{\prime},l^{\prime}}^{(1)}\sum_{j=k+1}^{n}\mathbb{E}[\alpha_{k+1,j}^{2}]\\
					\mathcal{S}_{l,l^{\prime}}^{n,3}:= & \sum_{m=0}^{n-1}\sum_{k=l\lor m+1}^{(n-1)\land(m+l)}\sum_{m^{\prime}=m+1}^{k-1}\beta_{k,m,l}^{(1)}\beta_{k,m^{\prime},l^{\prime}}^{(1)}\sum_{j=k+1}^{n}\mathbb{E}[\alpha_{k+1,j}^{2}]\\
					\mathcal{S}_{l,l^{\prime}}^{n,4}:= & \sum_{m^{\prime}=l+1}^{n-1}\sum_{m=0}^{m^{\prime}-1}\sum_{k=l\lor m+1}^{m^{\prime}}\beta_{k,m,l}^{(1)}\beta_{k,m^{\prime},l^{\prime}}^{(1)}\sum_{j=m^{\prime}+1}^{n}\mathbb{E}[\alpha_{k+1,j}^{2}]\\
					\mathcal{S}_{l,l^{\prime}}^{n,5}:= & \sum_{m=l}^{n-1}\sum_{m^{\prime}=l-l^{\prime}}^{m-1}\sum_{k=l}^{m\land(m^{\prime}+l^{\prime})}\beta_{k,m,l}^{(1)}\beta_{k,m^{\prime},l^{\prime}}^{(1)}\sum_{j=m+1}^{n}\mathbb{E}[\alpha_{k+1,j}^{2}]\\
					\mathcal{S}_{l,l^{\prime}}^{n,6}:= & \sum_{m=l}^{n-1}\sum_{k=l}^{m}[\beta_{k,m,l}^{(1)}\beta_{k,m,l^{\prime}}^{(1)}-\mathbb{E}(\beta_{k,m,l}^{(1)}\beta_{k,m,l^{\prime}}^{(1)})]\sum_{j=m+1}^{n}\mathbb{E}[\alpha_{k+1,j}^{2}]\\
					\mathcal{S}_{l,l^{\prime}}^{n,7}:= & \sum_{m^{\prime}=l+1}^{n-1}\sum_{m=l}^{m^{\prime}-1}\sum_{k=l}^{m}\beta_{k,m,l}^{(1)}\beta_{k,m^{\prime},l^{\prime}}^{(1)}\sum_{j=m^{\prime}+1}^{n}\mathbb{E}[\alpha_{k+1,j}^{2}]
				\end{align*}
				It remains to verify that each $\mathcal{S}_{l,l^{\prime}}^{n,\iota}$ satisfies \eqref{eq:bound2_neglig_AVAR}. We only focus on $\mathcal{S}_{l,l^{\prime}}^{n,1}$ and $\mathcal{S}_{l,l^{\prime}}^{n,2}$
				since the other terms can be analysed in the same way. Set $\theta_{k}:=\sum_{j=k+1}^{n}\mathbb{E}[\alpha_{k+1,j}^{2}]$. Then,
				\begin{align*}
					\mathbb{E}(\rvert\mathcal{S}_{l,l^{\prime}}^{n,1}\rvert^{2})\leq & \sum_{m=0}^{n-1}\sum_{k=l\lor m+1}^{(n-1)\land(m+l)}\sum_{p=l\lor m+1}^{(n-1)\land(m+l)}\mathbb{E}(\beta_{k,m,l}^{(1)}\beta_{k,m,l^{\prime}}^{(1)}\beta_{p,m,l}^{(1)}\beta_{p,m,l^{\prime}}^{(1)})\theta_{k}\theta_{p}
				\end{align*}
				By Lemma \ref{moment_estimates_ahat-1}, 
				\[
				\mathbb{E}(\beta_{k,m,l}^{(1)}\beta_{k,m,l^{\prime}}^{(1)}\beta_{p,m,l}^{(1)}\beta_{p,m,l^{\prime}}^{(1)})\lesssim_{a,\mathfrak{t}_{L}}\mathrm{Leb}(\cup_{i=0}^{k\land p-l}\mathcal{P}_{A}^{n}(i,m)),
				\]
				and $\rvert\theta_{k}\rvert\leq a(0)\Delta_{n}$. Hence,
				\[
				\mathbb{E}(\rvert\mathcal{S}_{l,l^{\prime}}^{n,1}\rvert^{2})\lesssim\Delta_{n}\sum_{m=0}^{n-1}\sum_{k=l\lor m+1}^{(n-1)\land(m+l)}\theta_{k}\sum_{p=l\lor m+1}^{k}\mathrm{Leb}(\cup_{i=0}^{p-l}\mathcal{P}_{A}^{n}(i,m)).
				\]
				Plainly, 
				\[
				\sum_{p=l\lor m+1}^{k}\mathrm{Leb}(\cup_{i=0}^{p-l}\mathcal{P}_{A}^{n}(i,m))=\sum_{p=l\lor m+1}^{k}\int_{t_{m}+t_{l}-t_{p}}^{t_{m+1}+t_{l}-t_{p}}a(s)\mathrm{d}s\leq\mathrm{Leb}(A),
				\]
				and 
				\begin{align*}
					\sum_{k=l\lor m+1}^{(n-1)\land(m+l)}\theta_{k} & =\sum_{k=l\lor m+1}^{(n-1)\land(m+l)}\sum_{j=k+1}^{(n-1)\land(m+l)}\mathbb{E}[\alpha_{k+1,j}^{2}]+\sum_{k=l\lor m+1}^{(n-1)\land(m+l)}\sum_{j=(n-1)\land(m+l)}^{n}\mathbb{E}[\alpha_{k+1,j}^{2}]\\
					& \leq l\Delta_{n}+\mathrm{Leb}(A),
				\end{align*}
				where in the second inequality we also used (\ref{eq:reprA}). This
				shows that $\mathcal{S}_{l,l^{\prime}}^{n,1}$ satisfies \eqref{eq:bound2_neglig_AVAR}.
				Similarly, arguments give that
				\begin{align*}
					\mathbb{E}(\rvert\mathcal{S}_{l,l^{\prime}}^{n,2}\rvert^{2})=  \sum_{m=l-l^{\prime}}^{n-1}\sum_{m^{\prime}=(l-l^{\prime})\lor(m-l^{\prime})+1}^{m-1}&\sum_{k,p=l\lor m+1}^{(n-1)\land(m^{\prime}+l^{\prime})}\mathrm{Leb}(\cup_{i=0}^{k\land p-l}\mathcal{P}_{A}^{n}(i,m))\\
					&\times\mathrm{Leb}(\cup_{i=0}^{k\land p-l^{\prime}}\mathcal{P}_{A}^{n}(i,m^{\prime}))\theta_{k}\theta_{p}\\
					\lesssim  \Delta_{n}\sum_{m=l-l^{\prime}}^{n-1}\sum_{k=l\lor m+1}^{(n-1)\land(m+l^{\prime})}\theta_{k}&\lesssim n\Delta_{n}(l^{\prime}\Delta_{n}+\mathrm{Leb}(A)).
				\end{align*}
				\item [{$\ell=4:$}] Recall that $\varsigma_{k,j}:=\sum_{m=k+1}^{j-1}\alpha_{k+1,m}$.
				Arguing as above we may write 	
				\[\mathcal{V}_{n}^{(4,4)}  =\mathbb{E}(\mathcal{V}_{n}^{(4,4)})+\Delta_{n}^{2}\sum_{l=0}^{n-1}(\overline{c}_{l}^{n})^{2}\frac{\mathcal{S}_{l,l}^{n}}{n\Delta_{n}}
				+2\Delta_{n}^{2}\sum_{l=1}^{n-1}\sum_{l^{\prime}=0}^{l-1}\overline{c}_{l}^{n}\overline{c}_{l^{\prime}}^{n}\frac{\mathcal{S}_{l,l^{\prime}}^{n}}{n\Delta_{n}},\]
				
				where 
				\begin{align*}
					\mathcal{S}_{l,l^{\prime}}^{n}= &  \sum_{k=l}^{n-1}\sum_{j=k+1}^{n}[\varsigma_{k,j}^{2}-\mathbb{E}(\varsigma_{k,j}^2)]\theta_{k,l,k,l^{\prime}}^{j} +\sum_{k=l+1}^{n-1}\sum_{k^{\prime}=l^{\prime}}^{k-1}\sum_{j=k+1}^{n}\varsigma_{k^{\prime},j}\varsigma_{k,j}\theta_{k,l,k^{\prime},l^{\prime}}^{j}\\
					& +\sum_{k^{\prime}=l+1}^{n-1}\sum_{k=l}^{k^{\prime}-1}\sum_{j=k^{\prime}+1}^{n}\varsigma_{k^{\prime},j}\varsigma_{k,j}\theta_{k,l,k^{\prime},l^{\prime}}^{j} =:\sum_{\imath=1}^{3}\mathcal{S}_{l,l^{\prime}}^{n,\imath},
				\end{align*}
				wherein we have let
				\begin{equation}
					\theta_{k,l,k^{\prime},l^{\prime}}^{j}:=\mathrm{Leb}(\cup_{i=0}^{(k-l)\land (k^{\prime}-l)^{\prime}}\mathcal{P}_{A}^{n}(i,j)). \label{theta_kl_def}
				\end{equation}
				
				In this case we show that \eqref{eq:bound1_neglig_AVAR}
				holds with $\upsilon_{n}=C_{\alpha}+(n\Delta_{n})^{1-2(\alpha-1)}.$
				By construction $\varsigma_{k,j}$ is $\mathscr{F}_{k+1}^{n}$-measurable
				and independent of $\mathscr{F}_{k}^{n}$, reason why each $\mathcal{S}_{l,l^{\prime}}^{n,\imath}$
				is a sum of martingale differences. This, along
				with the fact that $\sum_{j=k\lor k^{\prime}+1}^{n}\theta_{k,l,k^{\prime},l^{\prime}}^{j}\leq\int_{\rvert t_{k}-t_{k^{\prime}}\rvert}^{\infty}a(s)\mathrm{d}s$
				imply that
				\begin{align*}
					\mathbb{E}(\rvert\mathcal{S}_{l,l^{\prime}}^{n}\rvert^{2})\lesssim & \Delta_{n}^{2}\sum_{k=1}^{n-1}\sum_{k^{\prime}=0}^{k-1}(\int_{t_{k}-t_{k^{\prime}}}^{\infty}a(s)\mathrm{d}s)^{2}\leq n\Delta_{n}\int_{0}^{n\Delta_{n}}(\int_{u}^{\infty}a(s)\mathrm{d}s)^{2}\mathrm{d}u.
				\end{align*}
				We now can follow the same arguments as in the case $\ell=2$.
			\end{description}
			
			\subsubsection*{Conditional Covariances}
			
			By symmetry, we only need to focus on $\mathcal{V}_{n}^{(\ell,\ell^{\prime})}$
			for $\ell=1,2,3,4$ and $\ell^{\prime}\geq\ell+1$.
			\begin{description}
				\item [{$\ell=1,\ell^{\prime}=2:$}] Set 
				\begin{align*}
					\mathcal{S}_{l,l^{\prime}}^{n} & =\mathfrak{K}_{3}\sum_{k=l^{\prime}}^{n-1}\chi_{k}\sum_{j=l\lor k+1}^{n-1}\mathrm{Leb}(\cup_{i=0}^{(k-l^{\prime})\land(j-l)}\mathcal{P}_{A}^{n}(i,j))\\
					&+\sum_{j=l\lor l^{\prime}}^{n-1}\sum_{k=l^{\prime}}^{j-1}[\beta_{j,j-1,l}^{(2)}\chi_{k}-\mathbb{E}(\beta_{j,j-1,l}^{(2)}\chi_{k})]\mathbb{E}[(\beta_{k,j,l^{\prime}}^{(1)})^{2}]\\
					& =:\mathcal{S}_{l,l^{\prime}}^{n,1}+\mathcal{S}_{l,l^{\prime}}^{n,2}
				\end{align*}
				Since
				\begin{equation}
					\sum_{j=l\lor k+1}^{n-1}\mathrm{Leb}(\cup_{i=0}^{(k-l^{\prime})\land(j-l)}\mathcal{P}_{A}^{n}(i,j))\leq\mathrm{Leb}(A),
					\label{estimate_sumLeb}
				\end{equation}
				
				we conclude, as in previous cases, that $\mathcal{S}_{l,l^{\prime}}^{n,1}$ satisfies \eqref{eq:bound1_neglig_AVAR}. The second term requires a separate argument. Decompose
				$\mathcal{S}_{l,l^{\prime}}^{n,2}=\mathcal{S}_{l,l^{\prime}}^{\star,n}+\tilde{\mathcal{S}}_{l,l^{\prime}}^{n,2}$, where 
				\[
				\mathcal{S}_{l,l^{\prime}}^{\star,n}:=\sum_{j=l\lor l^{\prime}}^{l+l^{\prime}}\sum_{k=l^{\prime}}^{j-1}[\beta_{j,j-1,l}^{(2)}\chi_{k}-\mathbb{E}(\beta_{j,j-1,l}^{(2)}\chi_{k})]\mathbb{E}[(\beta_{k,j,l^{\prime}}^{(1)})^{2}].
				\]
				Arguing as in the proof of Lemma 3 in \cite{SauriVeraart23}
				(specifically the analysis done for $\mathbb{E}(\xi_{j,n}^{(2)}\xi_{j,n}^{(3)}\mid\mathscr{G}_{j-1}^{n})$
				under the notation introduced in that paper) we conclude that $\tilde{\mathcal{S}}_{l,l^{\prime}}^{n,2}$
				satisfy (\ref{eq:bound2_neglig_AVAR}) and that
				\begin{equation}
					\mathbb{E}(\rvert\mathcal{S}_{l,l^{\prime}}^{\star,n}\rvert)\lesssim\Delta_{n}((l\lor l^{\prime})+1).\label{eq:bound2_neglig_AVAR_prime}
				\end{equation}
				With \eqref{eq:bound2_neglig_AVAR_prime} in place of \eqref{eq:bound2_neglig_AVAR}, the same argument yields
				$\mathbb{E}(|\mathcal{V}_{n}^{(1,2)}-\mathbb{E}(\mathcal{V}_{n}^{(1,2)})|
				)\to0$, as $n\to\infty $.
				
				\item [{$\ell=1,\ell^{\prime}=3:$}] By applying \eqref{eq:beta2_as_sumbeta1}, we obtain that \eqref{eq:decomp_sll_AVAR} holds with
				\begin{align*}
					\mathcal{S}_{l,l^{\prime}}^{n}  =&-\sum_{m=0}^{n-1}\sum_{k=l^{\prime}}^{(n-l-1)\land(m+l^{\prime})}\beta_{k,m,l^{\prime}}^{(1)}\theta_{m,k}^{l,l^{\prime}}\\
					&-\sum_{j=l\lor (l^{\prime}+1)}^{n-1}\sum_{k=l^{\prime}}^{j-1}[\beta_{j,j-1,l}^{(2)}\beta_{k,j-1,l^{\prime}}^{(2)}-\mathbb{E}(\beta_{j,j-1,l}^{(2)}\beta_{k,j-1,l^{\prime}}^{(2)})]\mathbb{E}(\alpha_{k+1,j}^{2})\\
					& =:\mathcal{S}_{l,l^{\prime}}^{n,1}+\mathcal{S}_{l,l^{\prime}}^{n,2},
				\end{align*}
				where $\theta_{m,k}^{l,l^{\prime}}:=\sum_{j=m\lor(k+l)+1}^{n-1}\mathbb{E}(\alpha_{k+1,j}^{3})$.
				As in the case $\ell=\ell^{\prime}=3$, one
				deduces that $\mathcal{S}_{l,l^{\prime}}^{n,1}$ satisfies \eqref{eq:bound2_neglig_AVAR}. Assume without loss of generality
				that $l\geq l^{\prime}$, and put 
				\begin{align*}
					\vartheta_{j,l,l^{\prime}}^{(1)} & =\sum_{m=0}^{j-1-l}\sum_{i=0}^{m}\alpha_{i,m}\sum_{k=i+l^{\prime}}^{m+l^{\prime}}\mathbb{E}(\alpha_{k+1,j}^{2});\\
					\vartheta_{j,l,l^{\prime}}^{(2)} & =\sum_{m=j-l}^{j-1-l^{\prime}}\sum_{i=0}^{m}\alpha_{i,m}\sum_{k=i+l^{\prime}}^{m+l^{\prime}}\mathbb{E}(\alpha_{k+1,j}^{2});\\
					\vartheta_{j,l,l^{\prime}}^{(3)} & =\sum_{i=0}^{j-1-l^{\prime}}\sum_{m=j-l^{\prime}}^{j-1}\alpha_{i,m}\sum_{k=i+l^{\prime}}^{j-1}\mathbb{E}(\alpha_{k+1,j}^{2}).
				\end{align*}
				Then $\mathcal{S}_{l,l^{\prime}}^{n,2}=\sum_{\imath=1}^{3}\mathcal{\tilde{S}}_{l,l^{\prime}}^{n,\imath}$,
				where $\mathcal{\tilde{S}}_{l,l^{\prime}}^{n,1}:=-\sum_{j=l\lor l^{\prime}+1}^{n-1}\beta_{j,j-1,l}^{(2)}\vartheta_{j,l,l^{\prime}}^{(1)}$,
				and for $\imath=2,3$
				\[
				\mathcal{\tilde{S}}_{l,l^{\prime}}^{n,\imath}:=-\sum_{j=l\lor l^{\prime}+1}^{n-1}[\beta_{j,j-1,l}^{(2)}\vartheta_{j,l,l^{\prime}}^{(\imath)}-\mathbb{E}(\beta_{j,j-1,l}^{(2)}\vartheta_{j,l,l^{\prime}}^{(\imath)})].
				\]
				Note that $\beta_{j,j-1,l}^{(2)}\vartheta_{j,l,l^{\prime}}^{(\imath)}$
				are $l$-uncorrelated. Furthermore, by Rosenthal's inequality, Lemma
				\ref{moment_estimates_ahat-1}, and the bound
				\[ \sum_{k=i+l^{\prime}}^{m+l^{\prime}}\mathbb{E}(\alpha_{k+1,j}^{2})\lesssim\int_{t_{j}-t_{m+l^{\prime}}}^{t_{j+1}-t_{m+l^{\prime}}}a(s)\mathrm{d}s,\quad m\leq j-1-l^{\prime},\]
				we obtain
				\[
				\mathbb{E}(\rvert\vartheta_{j,l,l^{\prime}}^{(\imath)}\rvert^{4})\lesssim\Delta_{n}^{4},\,\,\imath=1,2.
				\]
				Likewise, $\mathbb{E}(\rvert\vartheta_{j,l,l^{\prime}}^{(3)}\rvert^{4})\lesssim\Delta_{n}^{4}$. Consequently, arguing as in the case $\ell=\ell^{\prime}=1$, we conclude
				that $\mathcal{S}_{l,l^{\prime}}^{n,2}$ fulfils \eqref{eq:bound2_neglig_AVAR}.
				\item [{$\ell=1,\ell^{\prime}=4:$}] We start by noting that (\ref{eq:decomp_sll_AVAR})
				is satisfied with 
				\begin{align*}
					\mathcal{S}_{l,l^{\prime}}^{n} & =-\mathfrak{K}_{3}\sum_{k=l^{\prime}}^{n-1}\sum_{j=l\lor k+1}^{n-1}\varsigma_{k,j}\mathrm{Leb}(\cup_{i=0}^{(k-l^{\prime})\land(j-l)}\mathcal{P}_{A}^{n}(i,j))\\
					& -\sum_{j=l\lor l^{\prime}}^{n-1}\sum_{k=l^{\prime}}^{j-1}[\beta_{j,j-1,l}^{(2)}\varsigma_{k,j}-\mathbb{E}(\beta_{j,j-1,l}^{(2)}\varsigma_{k,j})]\mathrm{Leb}(\cup_{i=0}^{k-l^{\prime}}\mathcal{P}_{A}^{n}(i,j))=:\mathcal{S}_{l,l^{\prime}}^{n,1}+\mathcal{S}_{l,l^{\prime}}^{n,2}.
				\end{align*}
				Using \eqref{estimate_sumLeb}
				and $\mathbb{E}(\varsigma_{k,p}\varsigma_{k,j})\lesssim\Delta_{n}$,
				it follows that $\mathbb{E}(\rvert\mathcal{S}_{l,l^{\prime}}^{n,1}\rvert^{2})\lesssim n\Delta_{n}$. Next, decompose 
				$\mathcal{S}_{l,l^{\prime}}^{n,2}=\mathcal{S}_{l,l^{\prime}}^{\star,n}+\mathcal{S}_{l,l^{\prime}}^{\star\star,n}$,
				where 
				\[
				\mathcal{S}_{l,l^{\prime}}^{\star,n}:=\sum_{j=l\lor l^{\prime}}^{l+l^{\prime}+1}\sum_{k=l^{\prime}}^{j-1}[\beta_{j,j-1,l}^{(2)}\varsigma_{k,j}-\mathbb{E}(\beta_{j,j-1,l}^{(2)}\varsigma_{k,j})]\mathrm{Leb}(\cup_{i=0}^{k-l^{\prime}}\mathcal{P}_{A}^{n}(i,j)).
				\]
				By setting 
				\begin{align*}
					\vartheta_{j,l,l^{\prime}}^{(1)} & =\sum_{m=l^{\prime}+1}^{j-l-1}\sum_{k=l^{\prime}}^{m-1}\alpha_{k+1,m}\mathrm{Leb}(\cup_{i=0}^{k-l^{\prime}}\mathcal{P}_{A}^{n}(i,j));\\
					\vartheta_{j,l,l^{\prime}}^{(2)} & =\sum_{m=j-l}^{j-1}\sum_{k=l^{\prime}}^{m-1}\alpha_{k+1,m}\mathrm{Leb}(\cup_{i=0}^{k-l^{\prime}}\mathcal{P}_{A}^{n}(i,j)).
				\end{align*}
				we can further decompose $\mathcal{S}_{l,l^{\prime}}^{\star\star,n}=\mathcal{\tilde{S}}_{l,l^{\prime}}^{n,1}+\mathcal{\tilde{S}}_{l,l^{\prime}}^{n,2}$,
				where 
				\[ \mathcal{\tilde{S}}_{l,l^{\prime}}^{n,1}:=-\sum_{j=l+ l^{\prime}+2}^{n-1}\beta_{j,j-1,l}^{(2)}\vartheta_{j,l,l^{\prime}}^{(1)}, \]
				and 
				\[
				\mathcal{\tilde{S}}_{l,l^{\prime}}^{n,2}:=-\sum_{j=l+ l^{\prime}+2}^{n-1}[\beta_{j,j-1,l}^{(2)}\vartheta_{j,l,l^{\prime}}^{(2)}-\mathbb{E}(\beta_{j,j-1,l}^{(2)}\vartheta_{j,l,l^{\prime}}^{(2)})].
				\]
				As in the previous case, $\beta_{j,j-1,l}^{(2)}\vartheta_{j,l,l^{\prime}}^{(\imath)}$
				are $(l+1)$-correlated and $\mathbb{E}(\rvert\vartheta_{j,l,l^{\prime}}^{(\imath)}\rvert^{4})\lesssim\Delta_{n}^{4}$,
				reason why $\mathbb{E}(\rvert\mathcal{S}_{l,l^{\prime}}^{\star\star,n}\rvert^{2})\lesssim n\Delta_{n}((l+1)\Delta_{n})$.
				It remains to treat $\mathcal{S}_{l,l^{\prime}}^{\star,n}$. By  the Cauchy-Swarchz inequality and the fact that $\mathrm{Leb}(\cup_{i=0}^{k-l^{\prime}}\mathcal{P}_{A}^{n}(i,j))=\int_{t_{j}-t_{k-l^{\prime}}}^{t_{j+1}-t_{k-l^{\prime}}}a(s)\mathrm{d}s$
				we deduce that
				\[
				\mathbb{E}(\rvert\mathcal{S}_{l,l^{\prime}}^{\star,n}\rvert)\lesssim\Delta_{n}\sum_{j=l\lor l^{\prime}}^{l+l^{\prime}+1}\left(\sum_{k=l^{\prime}}^{j-1}\int_{t_{j}-t_{k-l^{\prime}}}^{t_{j+1}-t_{k-l^{\prime}}}a(s)\mathrm{d}s\right)^{1/2}\lesssim\Delta_{n}((l\lor l^{\prime})+1).
				\]
				Thus, $\mathcal{S}_{l,l^{\prime}}^{\star,n}$ satisfies \eqref{eq:bound2_neglig_AVAR_prime}, which, as explained above, suffices.
				\item [{$\ell=2,\ell^{\prime}=3:$}] Another application of \eqref{eq:beta2_as_sumbeta1}
				yields
				\begin{align*}
					\mathcal{S}_{l,l^{\prime}}^{n}= & \sum_{k=l^{\prime}+l+1}^{n-2}\chi_{k}\sum_{k^{\prime}=l^{\prime}}^{k-l-1}\beta_{k^{\prime},k-1,l^{\prime}}^{(2)}\theta_{k,k^{\prime}}+\sum_{k=l^{\prime}+l+1}^{n-2}\sum_{k^{\prime}=l^{\prime}}^{k-l-1}[\chi_{k}\beta_{k^{\prime},k,l^{\prime}}^{(1)}-\mathbb{E}(\chi_{k}\beta_{k^{\prime},k,l^{\prime}}^{(1)})]\theta_{k,k^{\prime}}\\
					& +\sum_{m=l^{\prime}+l+2}^{n-2}\sum_{k=l^{\prime}+l+1}^{m-1}\chi_{k}\sum_{k^{\prime}=l^{\prime}}^{k-l-1}\beta_{k^{\prime},m,l^{\prime}}^{(1)}\theta_{m,k^{\prime}}
					=:  \sum_{\imath=1}^{3}\mathcal{S}_{l,l^{\prime}}^{n,\imath},
				\end{align*}
				where $\theta_{k,k^{\prime}}:=\sum_{j=k+1}^{n-1}\mathbb{E}[\alpha_{k^{\prime}+1,j}^{2}]$.
				Once again, each $\mathcal{S}_{l,l^{\prime}}^{n,\imath}$ is a sum
				of martingale differences. By reasoning as in the proof
				of Lemma 3 in \cite{SauriVeraart23} (see the treatment of $\frac{1}{n\Delta_{n}}\sum_{j=l_{n}}^{n}\mathbb{E}(\xi_{j,n}^{(3)}\xi_{j,n}^{(4)}\mid\mathscr{G}_{j-1}^{n})$),
				the second moment of each summand in $\mathcal{S}_{l,l^{\prime}}^{n,\imath}$ is
				$\mathrm{O}(\Delta_n)$. Hence, $\mathcal{S}_{l,l^{\prime}}^{n,\imath}$ satisfies \eqref{eq:bound1_neglig_AVAR}, for every $\imath=1,2,3$.
				\item [{$\ell=2,\ell^{\prime}=4:$}] Using \eqref{theta_kl_def}, we have
				\[
				\mathcal{S}_{l,l^{\prime}}^{n}=-\sum_{j=l\lor l^{\prime}}^{n-1}\sum_{k=l}^{j-1}\sum_{k^{\prime}=l^{\prime}}^{j-1}[\chi_{k}\varsigma_{k^{\prime},j}-\mathbb{E}(\chi_{k}\varsigma_{k^{\prime},j})]\theta_{k,l,k^{\prime},l^{\prime}}^{j}.
				\]
				Assume $l\geq l^{\prime}$ and note that for $k\geq k^{\prime}+1$,
				it holds
				\[
				\chi_{k}=L(\cup_{i=0}^{k^{\prime}}\mathcal{P}_{A}^{n}(i,k))+\sum_{i=k^{\prime}+1}^{k}\alpha_{i,k}=:\lambda_{k^{\prime},k}+\sum_{i=k^{\prime}+1}^{k}\alpha_{i,k}.
				\]
				Then $\mathcal{S}_{l,l^{\prime}}^{n}=\sum_{\imath=1}^{4}\mathcal{S}_{l,l^{\prime}}^{n,\imath}$,
				where 
				\begin{align*}
					\mathcal{S}_{l,l^{\prime}}^{n,1} & :=-\sum_{k^{\prime}=l}^{n-2}\sum_{m=k^{\prime}+1}^{n-2}\alpha_{k^{\prime}+1,m}\sum_{k=l}^{k^{\prime}}\chi_{k}\bar{\theta}_{m,k^{\prime},l^{\prime}};\\
					\mathcal{S}_{l,l^{\prime}}^{n,2} & :=-\sum_{i=l+1}^{n-2}\sum_{k^{\prime}=l}^{i-1}\sum_{j=i}^{n-1}\sum_{k=i}^{j-1}[\alpha_{i,k}\varsigma_{k^{\prime},j}-\mathbb{E}(\varsigma_{k^{\prime},j}\alpha_{i,k})]\theta_{k,l,k^{\prime},l^{\prime}}^{j};\\
					\mathcal{S}_{l,l^{\prime}}^{n,3} & :=-\sum_{k^{\prime}=l}^{n-3}\sum_{m=k^{\prime}+1}^{n-2}\alpha_{k^{\prime}+1,m}\sum_{k=k^{\prime}+1}^{n-2}\lambda_{k^{\prime},k}\bar{\theta}_{m\lor k,k^{\prime},l^{\prime}};\\
					\mathcal{S}_{l,l^{\prime}}^{n,4} & :=-\sum_{j=l}^{n-1}\sum_{k=l}^{j-1}\sum_{k^{\prime}=l^{\prime}}^{l-1}[\chi_{k}\varsigma_{k^{\prime},j}-\mathbb{E}(\chi_{k}\varsigma_{k^{\prime},j})]\theta_{k,l,k^{\prime},l^{\prime}}^{j},
				\end{align*}
				with
				\[ \bar{\theta}_{m,k,l,k^{\prime},l^{\prime}}:=\mathrm{Leb}(\cup_{i=0}^{(k^{\prime}-l^{\prime})\land(k-l)}\cup_{j=m+1}^{n-1}\mathcal{P}_{A}^{n}(i,j))\leq\mathrm{Leb}(A). \]
				The same line of arguments used in the cases $\ell=\ell^{\prime}=2$
				and $\ell=\ell^{\prime}=4$, result in $\mathbb{E}(\rvert\mathcal{S}_{l,l^{\prime}}^{n,\imath}\rvert^{2})\lesssim n\Delta_{n}(C_{\alpha}+(n\Delta_{n})^{1-2(\alpha-1)})$,
				for $\imath=1,2$. Next, note that $\mathcal{S}_{l,l^{\prime}}^{n,3}$
				is once again a sum of martingale differences. For $\imath=3$,
				using independence of $(\alpha_{k^{\prime}+1,m})_{k^{\prime}+1\leq m\leq n-2}$
				and $(\lambda_{k^{\prime},k})_{k^{\prime}+1\leq k\leq n-2}$ for fixed $k^{\prime}$, we deduce
				\begin{align*}
					\mathbb{E}(\rvert\mathcal{S}_{l,l^{\prime}}^{n,3}\rvert^{2})= & \sum_{k^{\prime}=l}^{n-3}\sum_{m=k^{\prime}+1}^{n-2}\mathbb{E}(\alpha_{k^{\prime}+1,m}^{2})\sum_{k=k^{\prime}+1}^{n-2}\mathbb{E}(\lambda_{k^{\prime},k}^{2})(\bar{\theta}_{m\lor k,k^{\prime},l^{\prime}})^{2}\\
					\lesssim & \sum_{k^{\prime}=l}^{n-3}\sum_{m=k^{\prime}+1}^{n-2}\mathbb{E}(\alpha_{k^{\prime}+1,m}^{2})\lesssim n\Delta_{n}.
				\end{align*}
				After changing the order of summations, we get
				that 
				\begin{align*}
					\mathcal{S}_{l,l^{\prime}}^{n,4}= & -\sum_{k^{\prime}=l^{\prime}}^{l-1}\sum_{k=l}^{n-2}[\alpha_{k^{\prime}+1,k}^{2}-\mathbb{E}(\alpha_{k^{\prime}+1,k}^{2})]\bar{\theta}_{k,k^{\prime},l^{\prime}}\\
					& -\sum_{k^{\prime}=l^{\prime}}^{l-1}\sum_{k=l}^{n-2}\sum_{m=k^{\prime}+1}^{n-2}\alpha_{k^{\prime}+1,k}\alpha_{k^{\prime}+1,m}\bar{\theta}_{m\lor k,k^{\prime},l^{\prime}}\mathbf{1}_{m\geq k+1}\\
					& -\sum_{k^{\prime}=l^{\prime}}^{l-1}\sum_{k=l}^{n-2}\sum_{m=k^{\prime}+1}^{n-2}\alpha_{k^{\prime}+1,k}\alpha_{k^{\prime}+1,m}\bar{\theta}_{m\lor k,k^{\prime},l^{\prime}}\mathbf{1}_{m\leq k-1}\\
					& -\sum_{k^{\prime}=l^{\prime}}^{l-1}\sum_{m=k^{\prime}+1}^{n-2}\alpha_{k^{\prime}+1,m}\sum_{i=0}^{k^{\prime}}\sum_{k=l}^{n-2}\alpha_{i,k}\bar{\theta}_{m\lor k,k^{\prime},l^{\prime}}\\
					& -\sum_{k^{\prime}=l^{\prime}}^{l-1}\sum_{m=k^{\prime}+1}^{n-2}\alpha_{k^{\prime}+1,m}\sum_{k=l}^{n-2}\sum_{i=k^{\prime}+2}^{k}\alpha_{i,k}\bar{\theta}_{m\lor k,k^{\prime},l^{\prime}}.
				\end{align*}
				Using this we conclude that 
				\begin{align*}
					\mathbb{E}(\rvert\mathcal{S}_{l,l^{\prime}}^{n,4}\rvert^{2})\lesssim & \mathrm{Leb}(A)+\sum_{k^{\prime}=l^{\prime}}^{l-1}\sum_{k=l}^{n-2}\sum_{m=k^{\prime}+1}^{k}\mathbb{E}(\alpha_{k^{\prime}+1,k}^{2})\mathbb{E}(\alpha_{k^{\prime}+1,m}^{2})\\
					& +\sum_{k^{\prime}=l^{\prime}}^{l-1}\sum_{m=k^{\prime}+1}^{n-2}\mathbb{E}(\alpha_{k^{\prime}+1,m}^{2})\sum_{i=0}^{k^{\prime}}\sum_{k=l}^{n-2}\mathbb{E}(\alpha_{i,k}^{2})\\
					\lesssim & \mathrm{Leb}(A)(1+\Delta_{n}+l\Delta_{n}).
				\end{align*}
				The case $l<l^{\prime}$ follows analogously.
				\item [{$\ell=3,\ell^{\prime}=4:$}]
				This case follows from $(\ell,\ell^{\prime})=(2,3)$ by swapping $k$ and $k^{\prime}$ and replacing $\chi_{k}$ with $\varsigma_{k^{\prime},j}$. Hence, \eqref{eq:decomp_sll_AVAR} holds and each $\mathcal{S}_{l,l^{\prime}}^{n,\iota}$ satisfies \eqref{eq:bound1_neglig_AVAR}.
		\end{description}\end{proof}
		
		In the proof of the next lemma we will often use the following local
		approximation of $a$: 
		\begin{equation}
			a_{n}(s):=\int_{0}^{1}a(\Delta_{n}x+s)\mathrm{d}x.\label{eq:an_def}
		\end{equation}
		Clearly, $a_{n}\leq a$ and $a_{n}\rightarrow a$ uniformly on compacts,
		thanks to Assumption \ref{as:trawl}. We recall the reader that $\tau_{n}(s):=[s/\Delta_{n}]\Delta_{n}$,
		$s\geq0$.
		
		\begin{lemma}\label{lemmaavar_limit}Let Assumptions of Theorems
			\ref{CLT_Phin} and \ref{CLT_Lambdan_hatLambdan} hold. Then, for all $\ell,\ell^{\prime}=1,2,3,4$
			\begin{equation}
				\Sigma_{n}^{(\ell,\ell^{\prime})}(l,l^{\prime}):=\frac{1}{n\Delta_{n}}\sum_{j=l\lor l^{\prime}}^{n}\mathbb{E}(\xi_{j,l,n}^{(\ell)}\xi_{j,l^{\prime},n}^{(\ell^{\prime})})=F_{n}^{(\ell,\ell^{\prime})}(t_{l},t_{l^{\prime}})+\mathrm{o}(1),\,\,\label{eq:approx_avar_exp}
			\end{equation}
			uniformly on $l,l^{\prime}$, where $F_{n}^{(\ell,\ell^{\prime})}$
			is a sequence of bounded functions such that for all $s,r\geq0$,
			$F_{n}^{(\ell,\ell^{\prime})}(\tau_{n}(s),\tau_{n}(r))\rightarrow F^{(\ell,\ell^{\prime})}(s,r)$
			as $n\rightarrow\infty$. Moreover, 
			\begin{equation}
				\sum_{\ell,\ell^{\prime}=1}^{4}\Sigma_{n}^{(\ell,\ell^{\prime})}(\tau_{n}(s),\tau_{n}(r))\rightarrow\Sigma_{a}(s,r), \,\, 	\forall \, s,r\geq0,\label{final_eq_AVAR} 
			\end{equation}
			where $\Sigma_a$ as in \eqref{eq:AVAR_ALL}.
			
		\end{lemma}
		
		\begin{proof}As in the proof of Lemma \ref{lemmaavar_approx}, we
			analyse $\Sigma_{n}^{(\ell,\ell^{\prime})}(l,l^{\prime})$ case by case.
			In view of $\Sigma_{n}^{(\ell^{\prime},\ell)}(l,l^{\prime})=\Sigma_{n}^{(\ell,\ell^{\prime})}(l^{\prime},l)$,
			it is enough to consider $\ell^{\prime}\geq\ell$. Furthermore, for each pair $(\ell,\ell')$, we construct a function $F_{n}^{(\ell,\ell')}$ such that \eqref{eq:approx_avar_exp} holds. We extend $F_{n}^{(\ell,\ell')}$ by zero outside $[0,n\Delta_n]$. Unless stated otherwise, all error terms $\mathrm{O}(\cdot)$ are uniform in $l,l'$.
			\begin{description}
				\item [{$\ell=\ell^{\prime}=1:$}] Using (\ref{eq:cond_variance}) and
				Lemma \ref{moment_estimates_ahat-1}, we get that 
				\begin{align*}
					\Sigma_{n}^{(1,1)}(l,l^{\prime})= & \frac{\mathfrak{K}_{4}}{\Delta_{n}n}\sum_{j=l\lor l^{\prime}}^{n-1}\mathrm{Leb}(\cup_{i=0}^{j-l\lor l^{\prime}}\mathcal{P}_{A}^{n}(i,j))\\
					& +a_{n}(0)\frac{1}{n}\sum_{j=l\lor l^{\prime}}^{n-1}\mathrm{Leb}(\cup_{i=0}^{j-l\lor l^{\prime}}\cup_{m=j-l\land l^{\prime}}^{j-1}\mathcal{P}_{A}^{n}(i,m))\\
					& -\frac{1}{\Delta_{n}n}\sum_{j=l\lor l^{\prime}}^{n-1}\mathrm{Leb}(\cup_{i=0}^{j-l}\mathcal{P}_{A}^{n}(i,j))\mathrm{Leb}(\cup_{i=0}^{j-l^{\prime}}\mathcal{P}_{A}^{n}(i,j))+\mathrm{O}(\Delta_{n})
				\end{align*}
				Thus,
				\[
				\Sigma_{n}^{(1,1)}(l,l')
				= F_{n,1}^{(1,1)}(t_l,t_{l'}) + F_{n,2}^{(1,1)}(t_l,t_{l'}) + \mathrm{O}(\Delta_n),
				\]
				where for $s,r\leq n\Delta_{n}$ 
				\[
				F_{n,1}^{(1,1)}(s,r)
				=\mathfrak{K}_{4}a_{n}(s\lor r)\Big(1-\frac{s\lor r}{n\Delta_n}\Big),
				\]
				and
				\[
				F_{n,2}^{(1,1)}(s,r)
				=a_{n}(0)\int_{|s-r|}^{s\lor r} a(u)\,\mathrm{d}u\Big(1-\frac{s\lor r}{n\Delta_n}\Big).
				\]
				Since $a_n\leq a\leq a(0)$, $F_{n}^{(1,1)}:=F_{n,1}^{(1,1)}+F_{n,2}^{(1,1)}$ is bounded. Moreover,
				\[
				F_{n}^{(1,1)}(\tau_n(s),\tau_n(r))
				\to \mathfrak{K}_{4}a(s\lor r)
				+ a(0)\int_{|s-r|}^{s\lor r} a(u)\,\mathrm{d}u
				=: F^{(1,1)}(s,r).
				\]
				\item [{$\ell=\ell^{\prime}=2:$}] Just as above we get that 
				\[
				\Sigma_{n}^{(2,2)}(l,l^{\prime})= a_{n}(0)\frac{1}{n}\sum_{j=l\lor l^{\prime}+1}^{n}\int_{t_{l+1}\lor t_{l^{\prime}+1}}^{t_{j+1}}a(u)\mathrm{d}u.
				\]
				Thus, (\ref{eq:approx_avar_exp}) holds with 
				\begin{equation}
					\begin{aligned}F_{n}^{(2,2)}(s,r) & =a_{n}(0)\frac{1}{n}\sum_{j=[s/\Delta_{n}]\lor[r/\Delta_{n}]+1}^{n}\int_{s\lor r}^{t_{j}}a(u)\mathrm{d}u.\end{aligned}
				\end{equation}
				Furthermore, by the DCT
				
				\[F_{n}^{(2,2)}(\tau_{n}(s),\tau_{n}(r))\rightarrow a(0)\int_{s\lor r}^{\infty}a(u)\mathrm{d}u=:F^{(2,2)}(s,r)\]

				\item [{$\ell=\ell^{\prime}=3:$}] In this situation we have due to (\ref{eq:cond_variance})
				that 
				\begin{equation}\label{Sigma33}
					\Sigma_{n}^{(3,3)}(l,l^{\prime})=\frac{1}{n\Delta_{n}}\sum_{j=l\lor l^{\prime}+1}^{n}\sum_{k=l\lor l^{\prime}}^{j-1}\mathrm{Leb}(\mathcal{\mathcal{P}}_{A}^{n}(k+1,j))\int_{t_{l}\lor t_{l^{\prime}}-t_{l}\land t_{l^{\prime}}}^{t_{j}-t_{k}+t_{l}\lor t_{l^{\prime}}}a(u)\mathrm{d}u,
				\end{equation}
				For $m\geq l\lor l'+1$, set $
				A_m^{l,l'}:=\int_{t_{m-1}-t_l\lor t_{l'}}^{t_m-t_l\lor t_{l'}} a(s)\mathrm{d}s$,
				and let \(m=j-k+l\lor l'\). Then the inner sum in \eqref{Sigma33} equals
				\begin{align*}
					\sum_{m=l\lor l^{\prime}+1}^{j}\int_{t_{l}\lor t_{l^{\prime}}-t_{l}\land t_{l^{\prime}}}^{t_{m}}a(u)\mathrm{d}u[A_{m}^{l,l^{\prime}}-A_{m+1}^{l,l^{\prime}}] 
					=&\Delta_{n}a_{n}(0)\int_{t_{l}\lor t_{l^{\prime}}-t_{l}\land t_{l^{\prime}}}^{t_{l+1}\lor t_{l^{\prime}+1}}a(u)\mathrm{d}u\\
					& +\Delta_{n}\int_{t_{l+1}\lor t_{l^{\prime}+1}}^{t_{j}}a_{n}(\tau_{n}(u))a(u-t_{l}\lor t_{l^{\prime}})\mathrm{d}u\\
					& +\mathrm{O}\left(\int_{t_{j}}^{t_{j+1}}a(u-t_{l}\lor t_{l^{\prime}})\mathrm{d}u\right).
				\end{align*}
				Hence, $\Sigma_{n}^{(3,3)}(l,l^{\prime})=F_{n}^{(3,3)}(t_{l},t_{l^{\prime}})+\mathrm{O}(1/(n\Delta_{n})),$
				where 
				\begin{align*}
					F_{n}^{(3,3)}(s,r):=&\frac{1}{n\Delta_{n}}\int_{s\lor r}^{n\Delta_{n}}\int_{s\lor r}^{y}a_{n}(\tau_{n}(u))a(u-s\lor r)\mathrm{d}u\mathrm{d}y\\
					&+a(0)\int_{s\lor r-s\land r}^{s\lor r}a(u)\mathrm{d}u\left(1-\frac{s\lor r}{n\Delta_{n}}\right).
				\end{align*}
				Invoking again the inequality $a_{n}\leq a\leq a(0)$, we obtain that
				$\rvert F_{n}^{(3)}(s,r)\rvert\leq3a(0)\mathrm{Leb}(A)$ and just
				as for $F_{n}^{(2,2)}$ we also get 
				\[
				F_{n}^{(3,3)}(\tau_{n}(s),\tau_{n}(r))\rightarrow\int_{s\lor r}^{\infty}a(u)a(u-s\lor r)\mathrm{d}u+a(0)\int_{\rvert s-r\rvert}^{s\lor r}a(u)\mathrm{d}u=:F^{(3,3)}(s,r).
				\]
				\item [{$\ell=\ell^{\prime}=4:$}] Using \eqref{eq:cond_variance} and making a change of variable, we obtain
				\[
				\Sigma_{n}^{(4,4)}(l,l^{\prime}) 
				=\frac{1}{n}\sum_{j=l\lor l^{\prime}+1}^{n}\sum_{m=l\lor l^{\prime}+1}^{j-1}\int_{t_{m}-t_{l}\lor t_{l^{\prime}}}^{t_{m+1}-t_{l}\lor t_{l^{\prime}}}[a_{n}(0)-a(u)]a_{n}(t_{m})\mathrm{d}u.
				\]
				
				As a result, \eqref{eq:approx_avar_exp} is satisfied with 
				\[
				F_{n}^{(4,4)}(s,r):=\frac{1}{n\Delta_{n}}\int_{s\lor r}^{n\Delta_{n}}\int_{s\lor r}^{y}[a_{n}(0)-a(u-s\lor r)]a_{n}(\tau_{n}(u))\mathrm{d}u\mathrm{d}y,\,\,0\leq r,s\leq n\Delta_{n}.
				\]
				In addition, 
				\[
				F_{n}^{(4,4)}(\tau_{n}(s),\tau_{n}(r))\rightarrow\int_{s\lor r}^{\infty}[a(0)-a(u-s\lor r)]a(u)\mathrm{d}u=:F^{(4,4)}(s,r),
				\]
				thanks to the DCT.
				\item [{$\ell=1,\ell^{\prime}=2:$}] In this case we use relation (\ref{eq:cond_cov})
				to get that 
				\[
				\Sigma_{n}^{(1,2)}(l,l^{\prime})=\frac{1}{n\Delta_{n}}\sum_{j=l\lor l^{\prime}+1}^{n-1}\sum_{k=l^{\prime}}^{j-1}\mathbb{E}(\chi_{k}\beta_{j,j-1,l}^{(2)})\mathbb{E}[(\beta_{k,j,l^{\prime}}^{(1)})^{2}].
				\]
				But 
				\begin{align*}
					\sum_{k=l^{\prime}}^{j-1}\mathbb{E}(\chi_{k}\beta_{j,j-1,l}^{(2)})\mathbb{E}[(\beta_{k,j,l^{\prime}}^{(1)})^{2}] & =\sum_{k=l^{\prime}}^{j-1}\mathbb{E}[(\beta_{j,k,l}^{(1)})^{2}]\mathbb{E}[(\beta_{k,j,l^{\prime}}^{(1)})^{2}]\mathbf{1}_{k\geq j-l}\\
					& =\Delta_{n}\sum_{m=l^{\prime}+1}^{j-(j-l-l^{\prime})^{+}}\int_{t_{m}}^{t_{m+1}}a(u)\mathrm{d}ua_{n}(t_{l}+t_{l^{\prime}}-t_{m}).
				\end{align*}
				Hence, $\Sigma_{n}^{(1,2)}(l,l^{\prime})=F_{n}^{(1,2)}(t_{l},t_{l^{\prime}})+\mathrm{O}(1/n),$
				where
				\[
				F_{n}^{(1,2)}(s,r):=\frac{1}{n}\sum_{j=[s/\Delta_{n}]\lor[r/\Delta_{n}]}^{n}\sum_{m=[r/\Delta_{n}]+1}^{j-(j-[s/\Delta_{n}]-[r/\Delta_{n}])^{+}}\int_{t_{m}}^{t_{m+1}}a(u)a_{n}(s+r-\tau_{n}(u))\mathrm{d}u.
				\]
				The bound $a_n\leq a\leq a(0)$ yields boundedness of $F_n^{(1,2)}$. For fixed $s,r\geq0$, we further have 
				\[
				F_{n}^{(1,2)}(s,r)=\int_{r}^{s+r}a(u)a_{n}(s+r-\tau_{n}(u))\mathrm{d}u\left(1-\frac{\tau_{n}(r)+\tau_{n}(s)}{n\Delta_{n}}\right)+\mathrm{O}(\frac{s\lor r}{n\Delta_{n}}).
				\]
				Consequently, as $n\rightarrow\infty$ 
				\[
				F_{n}^{(1,2)}(\tau_{n}(s),\tau_{n}(r))\rightarrow\int_{r}^{s+r}a(u)a(s+r-u)\mathrm{d}u=:F^{(1,2)}(s,r),
				\]
				by the DCT. 
				\item [{$\ell=1,\ell^{\prime}=3:$}] Exactly as above 
				\[
				-\Sigma_{n}^{(1,3)}(l,l^{\prime})=\frac{1}{n\Delta_{n}}\sum_{j=l\lor l^{\prime}+1}^{n-1}\sum_{k=l^{\prime}}^{j-1}\mathbb{E}(\beta_{j,j-1,l}^{(2)}\beta_{k,j-1,l^{\prime}}^{(2)})\mathbb{E}(\alpha_{k+1,j}\chi_{j}).
				\]
				Write
				\[
				\sum_{k=l'}^{j-1}\mathbb{E}(\beta_{j,j-1,l}^{(2)}\beta_{k,j-1,l'}^{(2)})
				\mathbb{E}(\alpha_{k+1,j}\chi_j)
				=\mathfrak{S}_{j,l,l'}^{n,1}+\mathfrak{S}_{j,l,l'}^{n,2},
				\]
				where \(\mathfrak{S}_{j,l,l'}^{n,1}\) and \(\mathfrak{S}_{j,l,l'}^{n,2}\) correspond to the cases
				\(k\leq j-(l-l')\) and \(k>j-(l-l')\), respectively. 
				where 
				\begin{align*}
					\mathfrak{S}_{j,l,l^{\prime}}^{n,1} & :=\sum_{i=0}^{j-(l^{\prime}+1)\lor l}\mathrm{Leb}(\cup_{m=j-l}^{j-1}\mathcal{\mathcal{P}}_{A}^{n}(i,m))\sum_{k=i+l^{\prime}}^{j-1-(l-l^{\prime}+1)^{+}}\mathrm{Leb}(\mathcal{\mathcal{P}}_{A}^{n}(k+1,j)),\\
					\mathfrak{S}_{j,l,l^{\prime}}^{n,2} & :=\sum_{m=j-l+1}^{j-1}\mathrm{Leb}(\cup_{i=0}^{j-l}\mathcal{\mathcal{P}}_{A}^{n}(i,m))\sum_{k=l^{\prime}}^{j-1}\mathrm{Leb}(\mathcal{\mathcal{P}}_{A}^{n}(k+1,j))\mathbf{1}_{m+l^{\prime}\geq k\geq j-(l-l^{\prime})+1}.
				\end{align*}
				Simple algebraic manipulations result in 
				\begin{align*}
					\mathfrak{S}_{j,l,l^{\prime}}^{n,1}=&\sum_{i=0}^{j-(l^{\prime}+1)\lor l}\mathrm{Leb}(\cup_{m=j-l}^{j-1}\mathcal{\mathcal{P}}_{A}^{n}(i,m))\sum_{k=i+l^{\prime}}^{j-1-(l-l^{\prime}+1)^{+}}\mathrm{Leb}(\mathcal{\mathcal{P}}_{A}^{n}(k+1,j))\\
					&= \Delta_{n}\sum_{m=l\lor(l^{\prime}+1)}^{j-1}\int_{t_{m}}^{t_{m+1}}[a(u-t_{l})-a(u)][a_{n}((t_{l}-t_{l^{\prime}+1})^{+})-a_{n}(t_{m}-t_{l^{\prime}})]\mathrm{d}u\\
					& +\mathrm{O}\left(\Delta_{n}\int_{t_{j}-t_{l}}^{\infty}a(u)\mathrm{d}u\right)
				\end{align*}
				and 
				\[\mathfrak{S}_{j,l,l^{\prime}}^{n,2}  =\Delta_{n}\sum_{m=1}^{l-1}\int_{t_{m}}^{t_{m+1}}a(u)[a_{n}((t_{l}-t_{l^{\prime}+1}-t_{m})^{+})-a_{n}((t_{l}-t_{l^{\prime}+1})^{+})]\mathrm{d}u.\]
				Hence, (\ref{eq:approx_avar_exp}) holds with $-F_{n}^{(1,3)}(s,r)$ equal to
				\begin{align*}
					\frac{1}{n\Delta_{n}}\int_{s\lor r}^{n\Delta_{n}}\int_{s\lor r}^{y}[a(u-s)&-a(u)][a_{n}((s-r-\Delta_{n})^{+})-a_{n}(\tau_{n}(u)-s)]\mathrm{d}u\mathrm{d}y\\
					+\int_{0}^{s}a(u)[a_{n}((s-r-\Delta_{n}-\tau_{n}(u))^{+})&-a_{n}((s-r-\Delta_{n})^{+})]\mathrm{d}u\left(1-\frac{s\lor r}{n\Delta_{n}}\right).
				\end{align*}
				In addition,
				\begin{align*}
					F_{n}^{(1,3)}(\tau_{n}(s),\tau_{n}(r))\rightarrow & -\int_{s\lor r}^{\infty}[a(u-s)-a(u)][a((s-r)^{+})-a(u-s)]\mathrm{d}u\\
					& -\int_{0}^{s}a(u)[a((s-r-u)^{+})-a((s-r)^{+})]\mathrm{d}u,
				\end{align*}
				once again by the DCT.
				\item [{$\ell=1,\ell^{\prime}=4:$}] We start by noting that from Lemma
				\ref{moment_estimates_ahat-1}		
				\[\mathbb{E}(\beta_{j,j,l}^{(1)}\chi_{j}\alpha_{k+1,j}\beta_{k,j,l^{\prime}}^{(1)})  =\mathbb{E}(\beta_{k,l^{\prime},j}^{(1)}\beta_{j,l,j}^{(1)})\mathbb{E}(\alpha_{k+1,j}^{2})(1+\mathbf{1}_{k\leq j-l-1}).
				\]
				Thus, 
				\[
				\sum_{k=l^{\prime}}^{j-1}\mathbb{E}(\beta_{j,j,l}^{(1)}\chi_{j}\alpha_{k+1,j}\beta_{k,j,l^{\prime}}^{(1)})\leq\sum_{i=0}^{j-l^{\prime}-1}\mathrm{Leb}(\mathcal{\mathcal{P}}_{A}^{n}(i,j))\sum_{k=i+l^{\prime}}^{j-1}\mathrm{Leb}(\mathcal{\mathcal{P}}_{A}^{n}(k+1,j))\leq(a(0)\Delta_{n})^{2}.
				\]
				Therefore, in this case we have that 
				\[
				\Sigma_{n}^{(1,4)}(l,l^{\prime})=\frac{1}{n\Delta_{n}}\sum_{j=l\lor l^{\prime}+1}^{n-1}\sum_{k=l^{\prime}}^{j-1}\mathbb{E}(\varsigma_{k,j}\beta_{j,j-1,l}^{(2)})\mathrm{Leb}(\cup_{i=0}^{k-l^{\prime}}\mathcal{P}_{A}^{n}(i,j))+\mathrm{O}(\Delta_{n}).
				\]
				Since, 
				\[
				\mathbb{E}(\varsigma_{k,j}\beta_{j,l}^{(4)})=\mathrm{Leb}(\cup_{m=j-l}^{j-1}\mathcal{\mathcal{P}}_{A}^{n}(k+1,m))\mathbf{1}_{k\leq j-l-1},
				\]
				we conclude that for $j\geq l^{\prime}+l+1$	
				\[	
				\sum_{k=l^{\prime}}^{j-1}\mathbb{E}(\varsigma_{k,j}\beta_{j,j-1,l}^{(2)})\mathrm{Leb}(\cup_{i=0}^{k-l^{\prime}}\mathcal{P}_{A}^{n}(i,j))= 
				\Delta_{n}\sum_{m=l}^{j-l^{\prime}-1}\int_{t_{m}}^{t_{m+1}}\left[a(u-t_{l})-a(u)\right]\mathrm{d}ua_{n}(t_{m}+t_{l^{\prime}+1}).\]
				Therefore, \eqref{eq:approx_avar_exp} is fulfilled,
				with
				\[
				-F_{n}^{(1,4)}(s,r)=\frac{1}{n\Delta_{n}}\int_{s}^{n\Delta_{n}-r}\int_{s}^{y}[a(u-s)-a(u)]a_{n}(\tau_{n}(u)+r+\Delta_{n})]\mathrm{d}u\mathrm{d}y.
				\]
				Finally, as above, we obtain 
				\begin{align*}
					F_{n}^{(1,4)}(\tau_{n}(s),\tau_{n}(r))\rightarrow & -\int_{s}^{\infty}[a(u-s)-a(u)]a(u+r)\mathrm{d}u=:F^{(1,4)}(s,r).
				\end{align*}
				\item [{$\ell=2,\ell^{\prime}=3:$}] Plainly 
				\[-\Sigma_{n}^{(2,3)}(l,l^{\prime})=\frac{1}{n\Delta_n} \sum_{j=l^{\prime}+l^{\prime}+2}^{n}\sum_{k=l}^{j-1}\sum_{k^{\prime}=l^{\prime}}^{j-1}\mathbb{E}(\chi_{k}\beta_{k^{\prime},j-1,l^{\prime}}^{(2)})\mathbb{E}(\alpha_{k^{\prime}+1,j}^{2})\mathbf{1}_{k^{\prime}\leq(k-l-1)\land(k+l^{\prime})}\mathbf{1}_{k\geq l+l^{\prime}+1}.\]
				Rearranging the sums yields
				\begin{align*}
					-\Sigma_{n}^{(2,3)}(l,l^{\prime})&=\frac{1}{n\Delta_{n}}\sum_{k=l+l^{\prime}+1}^{n-1}\sum_{k^{\prime}=l^{\prime}}^{k-l-1}\mathrm{Leb}(\cup_{i=0}^{k^{\prime}-l^{\prime}}\mathcal{\mathcal{P}}_{A}^{n}(i,k))\sum_{j=k+1}^{n}\mathrm{Leb}(\mathcal{\mathcal{P}}_{A}^{n}(k^{\prime}+1,j))\\
					= & \frac{1}{n}\sum_{k=l+l^{\prime}+1}^{n-1}\sum_{k^{\prime}=l^{\prime}}^{k-l-1}a_{n}(t_{k}-t_{k^{\prime}}+t_{l^{\prime}})\int_{t_{k}-t_{k^{\prime}}}^{t_{k+1}-t_{k^{\prime}}}a(u)\mathrm{d}u+\mathrm{O}(\int_{n\Delta_{n}}^{\infty}a(u)\mathrm{d}u).
				\end{align*}
				This mean that (\ref{eq:approx_avar_exp}) is fulfilled with 
				\begin{align*}
					-F_{n}^{(2,3)}(s,r) & =\frac{1}{n\Delta_{n}}\int_{s+r}^{n\Delta_{n}}\int_{s}^{y-r}a(u)a_{n}(\tau_{n}(u)+r)]\mathrm{d}u\mathrm{d}y,
				\end{align*}
				satisfying
				\[
				F_{n}^{(2,3)}(\tau_{n}(s),\tau_{n}(r))\rightarrow-\int_{s}^{\infty}a(u)a(u+r)\mathrm{d}u=:F^{(2,3)}(s,r).
				\]
				\item [{$\ell=2,\ell^{\prime}=4:$}] From \eqref{eq:cond_cov2},
				\begin{align*}
					-\Sigma_{n}^{(2,4)}(l,l^{\prime})= & \frac{1}{n\Delta_{n}}\sum_{j=l\lor l^{\prime}+1}^{n-1}\sum_{k=l}^{j-1}\sum_{k^{\prime}=l^{\prime}}^{j-1}\mathbb{E}(\chi_{k}\varsigma_{k^{\prime},j})\theta_{k,l,k^{\prime},l^{\prime}}^{j}.
				\end{align*}
				where $\theta_{k,l,k^{\prime},l^{\prime}}^{j}$ as in \eqref{theta_kl_def}. Moreover,
				\begin{align*}
					\sum_{k=l,k^{\prime}=l^{\prime}}^{j-1}\mathbb{E}(\chi_{k}\varsigma_{k^{\prime},j})\theta_{k,l,k^{\prime},l^{\prime}}^{j}
					&=  \sum_{k=l,k^{\prime}=l^{\prime}}^{j-1}\mathrm{Leb}(\cup_{i=0}^{k-l}\mathcal{\mathcal{P}}_{A}^{n}(i,j))\mathrm{Leb}(\mathcal{\mathcal{P}}_{A}^{n}(k^{\prime}+1,k))\mathbf{1}_{k-1\geq k^{\prime}\geq k-(l-l^{\prime})}\\
					& +\sum_{k=l,k^{\prime}=l^{\prime}}^{j-1}\mathrm{Leb}(\cup_{i=0}^{k^{\prime}-l^{\prime}}\mathcal{\mathcal{P}}_{A}^{n}(i,j))\mathrm{Leb}(\mathcal{\mathcal{P}}_{A}^{n}(k^{\prime}+1,k))\mathbf{1}_{k^{\prime}+1+(l-l^{\prime})^{+}\leq k}\\
					= & \Delta_{n}[a_{n}(0)-a_{n}((t_{l}-t_{l^{\prime}})^{+})]\int_{t_{l+1}}^{t_{j+1}}a(u)\mathrm{d}u\\
					& +\Delta_{n}\sum_{m=l\lor l^{\prime}+2}^{j}\int_{t_{m}}^{t_{m+1}}a(u)\mathrm{d}u[a_{n}((t_{l}-t_{l^{\prime}})^{+})-a_{n}(t_{m}-t_{l^{\prime}+1})].
				\end{align*}
				Therefore, in this situation we may choose $-F^{(2,4)}_n(s,r)$ as
				\begin{align*}
					&\frac{1}{n\Delta_{n}}\int_{s\lor r}^{n\Delta_{n}}\int_{s\lor r}^{y}a(u)[a_{n}((s-r)^{+})
					-a_{n}(\tau_{n}(u)-r-\Delta_{n})]\mathrm{d}u\mathrm{d}y\\
					&+[a_{n}(0)-a_{n}((s-r)^{+})]\frac{1}{n\Delta_{n}}\int_{s\lor r}^{n\Delta_{n}}\int_{s}^{y}a(u)\mathrm{d}u\mathrm{d}y,	
				\end{align*}
				and 
				\[
				-F^{(2,4)}(s,r)=[a(0)-a((s-r)^{+})]\int_{s}^{\infty}a(u)\mathrm{d}u+\int_{s\lor r}^{\infty}a(u)[a((s-r)^{+})-a(u-r)]\mathrm{d}u.\]
				\item [{$\ell=3,\ell^{\prime}=4:$}] Finally, for all $l,l^{\prime}\geq0$,
				$k\geq l$ and $k^{\prime}\geq l^{\prime}$
				
				\[	\mathbb{E}[\beta_{k,j-1,l}^{(2)}\varsigma_{k^{\prime},j}]\mathbb{E}[\alpha_{k+1,j}\beta_{k^{\prime},j,l^{\prime}}^{(1)}]\mathbf{1}_{k+1\leq k^{\prime}-l^{\prime}}=\mathbb{E}[\beta_{k,j-1,l}^{(2)}\varsigma_{k^{\prime},j}]\mathbb{E}[\alpha_{k+1,j}^{2}]\mathbf{1}_{k^{\prime}+l+1\leq k\leq k^{\prime}-l^{\prime}-1}=0.\]
				Therefore, $\Sigma_{n}^{(3,4)}(l,l^{\prime})=0$.
			\end{description}
			We are left to show that \eqref{final_eq_AVAR} holds. Using the relation $\Sigma_{n}^{(\ell^{\prime},\ell)}(l,l^{\prime})=\Sigma_{n}^{(\ell,\ell^{\prime})}(l^{\prime},l)$
			and the first part of the proof, we obtain
			\begin{equation}
				\sum_{\ell,\ell^{\prime}=1}^{4}\Sigma_{n}^{(\ell,\ell^{\prime})}(\tau_{n}(s),\tau_{n}(r))\rightarrow\sum_{\ell=1}^{4}F^{(\ell,\ell)}(s,r)+\sum_{\ell=1}^{3}\sum_{\ell^{\prime}=\ell+1}^{4}\widetilde{F}^{(\ell,\ell^{\prime})}(s,r). \label{semi_expl_Sigma_a} 
			\end{equation}

			Using the explicit expressions obtained above gives that
			\[
			\sum_{\ell=1}^{4}F^{(\ell,\ell)}(s,r)=\mathfrak{K}_{4}a(s\lor r)+2a(0)\int_{\rvert s-r\rvert}^{\infty}a(u)\mathrm{d}u,
			\]
			as well as 
			\[
			F^{(1,2)}(s,r)+F^{(1,4)}(s,r)+F^{(2,3)}(s,r)=\sigma_{3}(s,r),
			\]
			and
			\[
			F^{(1,3)}(s,r)+F^{(2,4)}(s,r)
			=
			\sigma_{2}(s,r)-a(0)\int_{|s-r|}^{\infty}a(u)\,\mathrm{d}u.
			\]
			Plugging in the previous equations into \eqref{semi_expl_Sigma_a} completes the proof.\end{proof}

		\bibliographystyle{plain}
		\bibliography{bibSept24}
		
	\end{document}